\documentclass[11pt]{amsart}
\usepackage[top=4.2cm, bottom=4cm, left=2.4cm, right=2.4cm]{geometry}
\usepackage[utf8]{inputenc}
\usepackage[USenglish]{babel}
\usepackage[T1]{fontenc} 
\usepackage{mathrsfs}
\usepackage{wasysym}
\usepackage{tikz-cd}
\usepackage{mathtools}
\usepackage{amsmath}
\usepackage{amssymb,epsfig}
\usepackage{amsthm}
\usepackage[absolute]{textpos}
\usepackage{mathtools,emptypage}

\mathtoolsset{showonlyrefs}  

\usepackage{todonotes}
\usepackage{quiver}

\usepackage[bookmarks=true]{hyperref}
\usepackage{xcolor}
\hypersetup{
    colorlinks,
    linkcolor={red!50!black},
    citecolor={blue!50!black},
    urlcolor={blue!80!black},
}
\newcommand{\avint}{{\mathop{\,\rlap{-}\!\!\int}\nolimits}}

\newcommand{\ppi}{{\mbox{\boldmath$\pi$}}}
\newcommand{\sfd}{{\sf d}}
\newcommand{\restr}[1]{\lower3pt\hbox{$|_{#1}$}}
\newcommand{\ca}[1]{\overset{\circ}{{#1}}}


\newcommand{\Kliminf}{K\kern-3pt-\kern-2pt\mathop{\rm lim\,inf}\limits}  
\renewcommand{\d}{{\mathrm d}}

\newcommand{\e}{{\rm{e}}}                          
 \newcommand{\X}{{\rm X}}
 \newcommand{\Y}{{\rm Y}}

\newcommand{\limi}{\varliminf}
\newcommand{\lims}{\varlimsup}

\newcommand{\lip}{{\rm lip}}

\renewcommand{\div}{{\rm div}}

\newcommand{\mm}{\mathfrak m}                                

\newtheorem{theorem}{Theorem}[section]

\newtheorem{lemma}[theorem]{Lemma}
\newtheorem{proposition}[theorem]{Proposition}

\theoremstyle{definition}
\newtheorem{remark}[theorem]{Remark}
\newtheorem{definition}[theorem]{Definition}

\newtheorem{example}[theorem]{Example}

\title[Sobolev, BV and perimeter extensions in metric measure spaces]{Sobolev, BV and perimeter extensions\\ in metric measure spaces}
\author{Emanuele Caputo}
\author{Jesse Koivu}
\author{Tapio Rajala}

\address{University of Jyvaskyla \\
         Department of Mathematics and Statistics \\
         P.O. Box 35 (MaD) \\
         FI-40014 University of Jyvaskyla \\
         Finland}
         
\email{emanuele.e.caputo@jyu.fi}
\email{jesse.j.j.koivu@jyu.fi}
\email{tapio.m.rajala@jyu.fi}

\begin{document}

\begin{abstract}
 We study extensions of sets and functions in general metric measure spaces. We show that an open set has the strong BV extension property if and only if it has the strong extension property for sets of finite perimeter. We also prove several implications between the strong BV extension property and extendability of two different non-equivalent versions of Sobolev $W^{1,1}$-spaces and show via examples that the remaining implications fail.
\end{abstract}

\date{\today}
\keywords{Sobolev extension, BV-extension, Sets of finite perimeter}
\subjclass[2020]{Primary 30L99. Secondary 46E36, 26B30}
\thanks{The authors acknowledge the support from the Academy of Finland, grant no. 314789. The first named author also thanks the support from Academy of Finland, grant no. 321896}

\maketitle
\tableofcontents
\section{Introduction}
In this paper we study connections between the extendability of $BV$-functions, $W^{1,1}$-functions and of sets of finite perimeter in the setting of general metric measure spaces $(\X,\sfd,\mm)$ where the metric space $(\X,\sfd)$ is assumed to be complete and separable and the reference measure $\mm$ to be a nonnegative Borel measure which is finite on bounded sets.
More precisely, we study variants of the following question with different (subsets of) function spaces and (semi)norms: given an open set $\Omega \subset \X$ does there exist a constant $C>0$ such that for every $u \in BV(\Omega)$ there is $Eu\in BV(\X)$ with $Eu|_\Omega = u$ and $\|Eu\| \le C\|u\|$?
Sobolev spaces have been recently studied more and more in the  general context of metric measure spaces.
The generality will force us to find new ideas for proofs. In order to highlight this, we will next contrast our results and proofs with the more traditional settings for analysis on metric measure spaces.

After a series of fundamental works (in particular \cite{HK1998,C1999,HK2000}), the typical starting assumptions for questions that require more structure on the metric measure space are the validity of a local Poincar\'e inequality and a doubling property for the measure. Spaces satisfying these two assumptions are referred to as PI-spaces. When dealing with $W^{1,1}$- or BV-functions, the relevant Poincar\'e inequality is the $(1,1)$-Poincar\'e inequality, which allows one to control the $L^1$-norm of a function by the $L^1$-norm of its gradient.
One way the PI-assumption helps is that one can modify functions via partitions of unity to become locally Lipschitz so that the BV or Sobolev norm does not increase more than by a constant. We will return to this at the end of the introduction.
Another way the PI-assumption is used is to obtain compactness of bounded sets in the BV space with respect to $L^1$ topology, which in general might fail, see Remark \ref{rmk:compactness} and Example \ref{ex:BVfail}.
A consequence of the failure of the compactness is that the following restatement of the result of Burago and Maz'ya \cite{BM1967} (see also \cite[Section 9.3]{Maz2011}), although valid  on PI-spaces 
as observed by Baldi and Montefalcone \cite[Theorem 3.3]{BMF08}, fails in general, see Example \ref{ex:BVfail}.
\begin{theorem}[Burago and Maz'ya]\label{thm:BM}
A domain $\Omega \subset \mathbb R^n$ is a $\ca{BV}$-extension domain if and only if $\Omega$ has the extension property for sets of finite perimeter.
\end{theorem}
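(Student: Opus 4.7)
The plan is to prove the two implications separately, with the coarea formula playing the central role in both.

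For the forward direction, suppose $\Omega$ is a strong $BV$-extension domain with constant $C$, and let $E\subset\Omega$ have finite perimeter in $\Omega$. Applying the extension operator to $\chi_E\in BV(\Omega)$ produces $v\in BV(\mathbb R^n)$ with $v|_\Omega=\chi_E$ and $\|v\|_{BV(\mathbb R^n)}\le C(|E|+P(E,\Omega))$. The coarea formula $\int_0^1 P(\{v>t\},\mathbb R^n)\,dt\le \|Dv\|(\mathbb R^n)$ together with a mean value argument yields a level $t_*\in(0,1)$ with $P(\{v>t_*\},\mathbb R^n)\le\|v\|_{BV(\mathbb R^n)}$. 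Since $v=\chi_E$ on $\Omega$ and $t_*\in(0,1)$, the superlevel set $F:=\{v>t_*\}$ satisfies $F\cap\Omega=E$ up to a null set, giving the required perimeter extension.

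For the reverse direction, assume $\Omega$ has the strong extension property for sets of finite perimeter with constant $C'$. Given $u\in BV(\Omega)$, after splitting into positive and negative parts we may assume $u\ge 0$. The strategy is to slice $u$ via the coarea identities
\begin{equation*}
\|u\|_{L^1(\Omega)}=\int_0^\infty|E_t|\,dt,\qquad \|Du\|(\Omega)=\int_0^\infty P(E_t,\Omega)\,dt,\qquad E_t:=\{u>t\},
\end{equation*}
extend each $E_t$ to $F_t\subset\mathbb R^n$ with $|F_t|+P(F_t,\mathbb R^n)\le C'(|E_t|+P(E_t,\Omega))$, and reassemble via $Eu(x):=|\{t>0:x\in F_t\}|$; integrating the perimeter bound against $dt$ would formally give $\|Eu\|_{BV(\mathbb R^n)}\le C'\|u\|_{BV(\Omega)}$.

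The main obstacle is that the family $\{F_t\}$ need not be jointly measurable in $t$ or monotonically ordered, so the reassembly above is not directly meaningful. The classical resolution, following Burago--Maz'ya, is to approximate $u$ by simple functions $u_n:=2^{-n}\sum_{k\ge 1}\chi_{E_{k2^{-n}}}$, extend each level set separately to form $Eu_n:=2^{-n}\sum_{k\ge 1}\chi_{F_k^n}$, and use the triangle inequality to obtain $\|Eu_n\|_{BV(\mathbb R^n)}\le C'\|u_n\|_{BV(\Omega)}\le C'\|u\|_{BV(\Omega)}$ together with $Eu_n|_\Omega=u_n\to u$ in $L^1(\Omega)$. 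Compactness of bounded sets of $BV(\mathbb R^n)$ in $L^1_{\mathrm{loc}}(\mathbb R^n)$ then extracts a subsequential limit $Eu$, and lower semicontinuity of the BV-norm combined with $L^1$-continuity of restriction yield $Eu|_\Omega=u$ with $\|Eu\|_{BV(\mathbb R^n)}\le C'\|u\|_{BV(\Omega)}$. It is precisely this final compactness step which, as emphasized in Remark \ref{rmk:compactness} and Example \ref{ex:BVfail}, fails in a general metric measure space, motivating the novel arguments developed in the rest of the paper.
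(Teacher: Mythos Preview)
The paper does not give its own proof of Theorem~\ref{thm:BM}; it is quoted as a classical Euclidean result with reference to \cite{BM1967,Maz2011}. The closest arguments actually proved in the paper are Proposition~\ref{prop:weak_equivalence} and Proposition~\ref{prop:fullnorm_equivalence}, and I will compare against those.

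Your proposal has a systematic mismatch with the statement. Theorem~\ref{thm:BM} is about the $\ca{BV}$-seminorm (total variation only) and the perimeter-only extension property of Definition~\ref{def:ext_prop_set}. You instead assume the \emph{strong} $BV$-extension property and the full-norm perimeter extension property throughout. In the forward direction this is harmless once corrected: extend $\chi_E$, truncate to $[0,1]$, apply coarea and pick a good superlevel set; this is exactly the first half of the paper's proof of Proposition~\ref{prop:weak_equivalence}.

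In the reverse direction the mismatch is fatal for your argument as written. From the hypothesis you only get $P(F_k^n,\mathbb R^n)\le C'\,P(E_{k2^{-n}},\Omega)$ with no control on $|F_k^n|$, so there is no a priori $L^1_{\rm loc}$ bound on $Eu_n$ and the BV-compactness step is unjustified. A second gap is the claim $\|u_n\|_{BV(\Omega)}\le\|u\|_{BV(\Omega)}$: for arbitrary dyadic levels $k2^{-n}$ the Riemann sum $\sum_k 2^{-n}P(E_{k2^{-n}},\Omega)$ need not be bounded by $\int_0^\infty P(E_t,\Omega)\,dt$; the levels must be \emph{chosen} via a Chebyshev-type argument, as the paper does with the sets $I$ and the parameter $\delta_n$ in the proof of Proposition~\ref{prop:weak_equivalence}. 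In $\mathbb R^n$ the classical Burago--Maz'ya proof repairs the $L^1_{\rm loc}$ issue using the Poincar\'e inequality on a large ball together with $Eu_n|_\Omega=u_n$, but you have not supplied this step.

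On methodology: you follow the classical route (BV-compactness in $L^1_{\rm loc}$ to pass to the limit), which is valid in $\mathbb R^n$ once the gaps above are filled. The paper deliberately avoids this: in Proposition~\ref{prop:weak_equivalence} it restricts to $L^\infty$-functions and replaces BV-compactness by weak $L^2$-compactness against an auxiliary probability measure $\tilde\mm$ plus Mazur's lemma. This is precisely because BV-compactness fails in general metric measure spaces (Remark~\ref{rmk:compactness}, Example~\ref{ex:BVfail}), which is the point your final paragraph correctly anticipates.
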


The definitions of $\ca{BV}$-extension and perimeter extension are given in Section \ref{sec:extension_properties}. These definitions take into account only the variation of the function (or the perimeter of the set) and not the $L^1$-norm of the function (nor the measure of the set).
In the Euclidean setting with a bounded domain, having extension with the full norm (the sum of the total variation and  the $L^1$-norm) is the same as having it with just the total variation part \cite{KMS2010}.
Notice, however, that if in the Euclidean space we drop the connectedness assumption (that is, consider just an open set instead of a domain), the two definitions of extendability do not agree. A simple example of this is the union of two disjoint disks in the plane. This has the extension property with the full norm but it does not have the extension property with just the total variation part. In the metric measure space setting without a PI-assumption the above difference between the extendability is also present even for domains.
Since having a domain instead of an open set will not make a difference in our setting, we will state our results for open sets.
In general metric measure spaces we are able to prove the following version of Theorem \ref{thm:BM}.

\begin{theorem}\label{thm:BV0}
 An open subset $\Omega \subset \X$ is a $(\ca{BV}\cap L^\infty,\|\cdot\|_{\ca{BV}})$-extension set if and only if it has the extension property for sets of finite perimeter.
\end{theorem}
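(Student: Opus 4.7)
The theorem splits into two implications of quite different difficulty. The easier one, that the $(\ca{BV}\cap L^\infty,\|\cdot\|_{\ca{BV}})$-extension property implies the perimeter-extension property, I would prove by testing the hypothesis on $\mathbf{1}_E\in\ca{BV}(\Omega)\cap L^\infty(\Omega)$ for a set $E\subset\Omega$ of finite perimeter, obtaining $v\in\ca{BV}(\X)$ with $v|_\Omega=\mathbf{1}_E$ and $\|v\|_{\ca{BV}}\le C\,P(E,\Omega)$. The coarea inequality $\int_{\mathbb R}P(\{v>t\},\X)\,\d t\le \|v\|_{\ca{BV}}$ then forces some $t\in(0,1)$ with $P(\{v>t\},\X)\le C\,P(E,\Omega)$; since $v=\mathbf{1}_E$ on $\Omega$, the set $F:=\{v>t\}$ meets $\Omega$ exactly in $E$ modulo $\mm$-null sets.

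For the converse, assume the perimeter-extension property with constant $C$ and take $u\in\ca{BV}(\Omega)\cap L^\infty(\Omega)$; by splitting $u=u^+-u^-$ and rescaling I reduce to $0\le u\le 1$. Using $\int_0^1 P(E_t,\Omega)\,\d t=\|u\|_{\ca{BV}(\Omega)}$ for $E_t=\{u>t\}$, fix a countable dense set $T\subset(0,1)$ of levels with $P(E_t,\Omega)<\infty$, and apply the hypothesis to each $t\in T$ to get $F_t\subset\X$ with $F_t\cap\Omega=E_t$ and $P(F_t,\X)\le C\,P(E_t,\Omega)$. The plan is then to combine the $F_t$ into a single $\ca{BV}$-extension of $u$.

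Since the $F_t$ need not be nested outside $\Omega$, I would pass through a rank-based rearrangement. For each finite $T_n=\{t_1<\cdots<t_{N_n}\}\subset T$, set
\[ G^{(n)}_{t_k}:=\bigl\{x\in\X:\#\{j\in\{1,\dots,N_n\}:x\in F_{t_j}\}\ge k\bigr\}. \]
This family is automatically nested, and because the $E_{t_j}$ are nested inside $\Omega$, it satisfies $G^{(n)}_{t_k}\cap\Omega=E_{t_k}$. The pointwise identity $\sum_k\mathbf{1}_{G^{(n)}_{t_k}}=\sum_j\mathbf{1}_{F_{t_j}}$, combined with the coarea formula for the nested family on the left and the triangle inequality for $\|\cdot\|_{\ca{BV}}$ on the right, yields
\[ \sum_{k=1}^{N_n}P(G^{(n)}_{t_k},\X)\le\sum_{j=1}^{N_n}P(F_{t_j},\X)\le C\sum_{j=1}^{N_n}P(E_{t_j},\Omega). \]
With $T_n$ chosen as a uniform grid, the layer-cake function $f_n:=\tfrac{1}{N_n}\sum_k\mathbf{1}_{G^{(n)}_{t_k}}$ lies in $\ca{BV}(\X)\cap L^\infty(\X)$ with $\|f_n\|_{\ca{BV}}\le\tfrac{C}{N_n}\sum_j P(E_{t_j},\Omega)\to C\|u\|_{\ca{BV}(\Omega)}$ by Riemann-sum convergence, and on $\Omega$ agrees with a Riemann approximation of $u$ converging to $u$ in $L^1$.

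The remaining step, passing $f_n$ to a limit on $\X$, is where I expect the main technical difficulty to lie. In PI-spaces this is handled by $L^1_{\rm loc}$-compactness of BV balls, but precisely this compactness can genuinely fail in the present generality (Example \ref{ex:BVfail}). My plan is to use the uniform $L^\infty$ bound on the $f_n$ to extract a weak-$*$ limit $Eu\in L^\infty(\X)$, identify $Eu=u$ on $\Omega$ by duality against $L^1$-functions supported there, and then derive $Eu\in\ca{BV}(\X)$ together with $\|Eu\|_{\ca{BV}}\le C\|u\|_{\ca{BV}(\Omega)}$ from a lower-semicontinuity property of the total variation against uniformly $L^\infty$-bounded weakly-$*$ convergent sequences. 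The $L^\infty$ hypothesis on $u$ is exactly what keeps the indicator structure alive across this limit, which explains why the statement is restricted to bounded BV functions.
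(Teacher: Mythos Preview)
Your overall architecture matches the paper's closely: test the extension on $\chi_S$ and use coarea to select a level set for the forward direction; for the converse, reduce to $0\le u\le 1$, extend the superlevel sets, form layer-cake approximants $f_n$, and pass to a limit. Two points deserve comment.

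\medskip
\textbf{The limit step is the real issue.} You plan to extract a weak-$*$ limit in $L^\infty(\X)$ and then invoke ``a lower-semicontinuity property of the total variation against uniformly $L^\infty$-bounded weakly-$*$ convergent sequences.'' In the present setting that property is \emph{not} available off the shelf: the total variation in Definition of $|Df|$ is a relaxation with respect to $L^1_{\rm loc}$-convergence, and its lower semicontinuity (equation \eqref{eq:lsc_totvar_open} in the paper) is stated only for that topology. Weak-$*$ $L^\infty$ convergence is strictly weaker, and there is no duality formula for $|Df|$ in general metric measure spaces to fall back on. The paper closes exactly this gap with a Mazur-type argument: it introduces a finite measure $\tilde\mm\ll\mm\ll\tilde\mm$, uses $0\le u_n\le 1$ to get boundedness in $L^2(\X,\tilde\mm)$, extracts a weak $L^2(\tilde\mm)$ subsequential limit, and applies Mazur's lemma to produce convex combinations $v_m$ converging \emph{strongly} in $L^2(\tilde\mm)$, hence in $L^1_{\rm loc}(\X,\mm)$; only then is \eqref{eq:lsc_totvar_open} invoked. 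Your plan becomes correct once you insert this step, but as written it rests on an unproved (and not standard) lower-semicontinuity claim.

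\medskip
\textbf{Two smaller remarks.} First, your rank-based rearrangement producing nested $G^{(n)}_{t_k}$ is correct and elegant, but unnecessary: the paper simply uses subadditivity $|Du_n|(\X)\le\sum_j 2^{-n}P(\tilde E_{t_j},\X)$ on the raw (non-nested) extensions, which gives the same bound. Second, your appeal to ``Riemann-sum convergence'' for $\tfrac{1}{N_n}\sum_j P(E_{t_j},\Omega)\to\|u\|_{\ca{BV}(\Omega)}$ is too quick: $t\mapsto P(E_t,\Omega)$ is merely $L^1$, so uniform-grid Riemann sums need not converge. The paper handles this by choosing each $t_j$ inside its dyadic interval $I_{j,n}$ from a set of positive measure where $P_\Omega(E_{t_j})\le\delta_n\int_{I_{j,n}}P_\Omega(E_r)\,\d r$ with $\delta_n=2^n(1+1/n)$, which yields the bound $|Du_n|(\X)\le(1+1/n)C_{\rm Per}|Du|_\Omega(\Omega)$ directly, without any limit of Riemann sums.
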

The reason why in Theorem \ref{thm:BV0} 
we need to restrict to $L^\infty$-functions is because without a PI-assumption we cannot control the $L^1$-norms of the extensions even locally. One way to impose sufficient control on the $L^1$-norms is to take the definitions of extensions with respect to the full norms:
\begin{theorem}\label{thm:BV}
 An open subset $\Omega \subset \X$ is a $BV$-extension set if and only if it has the extension property for sets of finite perimeter with the full norm.
\end{theorem}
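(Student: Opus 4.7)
The plan is to follow the Burago--Maz'ya strategy, adapted to the metric measure setting: for the ``only if'' direction, apply coarea and Chebyshev to a $BV$-extension of $\chi_E$ to select a good threshold; for the ``if'' direction, reconstruct a $BV$-extension of $u$ as a layer-cake of perimeter extensions of its super-level sets.

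For the forward implication, let $E \subset \Omega$ satisfy $P(E,\Omega) + \mm(E) < \infty$ and let $v \in BV(\X)$ extend $\chi_E$ with $\|v\|_{BV(\X)} \leq C(P(E,\Omega) + \mm(E))$. Truncating $v$ to $[0,1]$ preserves $v|_\Omega = \chi_E$ without enlarging the norm. The coarea formula
$$\|Dv\|(\X) = \int_0^1 P(\{v > t\},\X)\, dt, \qquad \|v\|_{L^1(\X)} = \int_0^1 \mm(\{v > t\})\, dt,$$
combined with Chebyshev's inequality on $(1/3,2/3)$, yields a single $t^* \in (1/3,2/3)$ with $P(\{v > t^*\},\X) \lesssim \|Dv\|(\X)$ and $\mm(\{v > t^*\}) \lesssim \|v\|_{L^1(\X)}$ simultaneously. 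Since $t^* \in (0,1)$, the set $F := \{v > t^*\}$ satisfies $F \cap \Omega = E$ and has $P(F,\X)+\mm(F) \leq C'(P(E,\Omega)+\mm(E))$.

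For the reverse implication, given $u \in BV(\Omega)$ we split $u = u^+ - u^-$ and treat the two nonnegative parts separately; so assume $u \geq 0$. The coarea formula in $\Omega$ gives that for a.e.\ $t > 0$ the super-level set $E_t := \{u > t\}$ has finite perimeter in $\Omega$ and finite measure, with
$$\int_0^\infty \bigl(P(E_t,\Omega) + \mm(E_t)\bigr)\, dt = \|u\|_{BV(\Omega)}.$$
Using the hypothesis, for each such $t$ pick an extension $F_t \subset \X$ of $E_t$ with $P(F_t,\X) + \mm(F_t) \leq C(P(E_t,\Omega) + \mm(E_t))$, in such a way that $(x,t) \mapsto \chi_{F_t}(x)$ is jointly measurable, and set
$$Eu(x) := \int_0^\infty \chi_{F_t}(x)\, dt.$$
On $\Omega$ the layer-cake identity recovers $u$. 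Fubini--Tonelli gives $\|Eu\|_{L^1(\X)} = \int_0^\infty \mm(F_t)\, dt \leq C\|u\|_{BV(\Omega)}$, and the subadditivity of the total variation (approximate $Eu$ by finite nonnegative sums $\sum_i c_i \chi_{F_{t_i}}$, apply the triangle inequality, and pass to the limit by $L^1$-convergence and lower semicontinuity of $\|D\cdot\|(\X)$) gives $\|D(Eu)\|(\X) \leq \int_0^\infty P(F_t,\X)\, dt \leq C\|u\|_{BV(\Omega)}$.

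The main technical obstacle is the measurable selection $t \mapsto F_t$: the perimeter-extension property supplies existence but not canonicity. I would handle this via a Kuratowski--Ryll-Nardzewski-type argument applied to the multifunction that assigns to each $t$ the set of admissible extensions, viewed as a closed subset of $L^1(\X)$ (closedness follows from $L^1$-lower semicontinuity of both perimeter and measure, and from the fact that $F \cap \Omega = E_t$ is preserved under $L^1$-limits). An alternative is to discretize $u$ as $u_n = 2^{-n}\lfloor 2^n u\rfloor$, extending only the countably many level sets $\{u \geq k 2^{-n}\}$ and building the finite-sum version of $Eu_n$ directly; the passage $n \to \infty$ is delicate here because $BV(\X)$-bounded sequences need not be $L^1$-compact without a PI-assumption, but arranging the dyadic choices to be nested across scales gives pointwise monotone convergence and bypasses the need for abstract compactness.
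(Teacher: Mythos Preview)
Your forward direction is fine and matches the paper's: truncate the $BV$-extension of $\chi_E$ to $[0,1]$ and use coarea/Cavalieri to select a good threshold (the paper simply picks $t_0$ where $\mm(\{Eu>t\})+P(\{Eu>t\},\X)$ is below its average over $[0,1]$, which is slightly cleaner than your two Chebyshev estimates, but equivalent).

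In the reverse direction you correctly identify the crux---getting the approximations $Eu_n$ to converge on $\X\setminus\Omega$ in the absence of $BV$-compactness---but neither of your proposed fixes closes the gap. For the layer-cake route, you need $(t,x)\mapsto\chi_{F_t}(x)$ jointly measurable; you invoke Kuratowski--Ryll-Nardzewski, but you have not checked that the multifunction sending $t$ to the set of admissible extensions is weakly measurable, and this is not automatic (the constraint $F\cap\Omega=E_t$ and the bound both depend on $t$ in a way that needs work). Even granting the selection, ``approximate by finite sums and pass to the limit'' hides a second issue: to get $\sum_i c_i P(F_{t_i},\X)$ close to $\int P(F_t,\X)\,dt$ you must pick the sample points $t_i$ carefully, and you have not said how. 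For the discrete route, ``arranging the dyadic choices to be nested across scales'' does \emph{not} give pointwise monotone convergence of $Eu_n$: reusing $F_{k/2^n}$ at scale $n+1$ gives consistency, but $Eu_{n+1}-Eu_n = \sum_k 2^{-(n+1)}\bigl(\chi_{F_{(2k-1)/2^{n+1}}}-\chi_{F_{k/2^n}}\bigr)$, which has no sign unless the extensions themselves satisfy $F_s\supset F_t$ for $s<t$. The perimeter-extension hypothesis does not give this, and forcing it by taking unions/intersections with previously chosen $F_t$'s makes the perimeter bounds accumulate uncontrollably.

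The paper avoids both problems. It never selects a continuum of extensions: it fixes $n$, chooses finitely many $t_j\in I_{j,n}=[(j-1)2^{-n},j2^{-n}]$ with $\mm(E_{t_j})+P_\Omega(E_{t_j})$ below $(1+1/n)$ times the average over $I_{j,n}$, and sets $u_n=\sum_j 2^{-n}\chi_{\tilde E_{t_j}}$. This gives $\|u_n\|_{BV(\X)}\le C_{\rm Per}'(1+1/n)\|u\|_{BV(\Omega)}$ and $u_n|_\Omega\to u$ in $L^1_{\rm loc}$, but nothing about convergence on $\X\setminus\Omega$. The key trick is to first reduce to $0\le u\le 1$ (so $0\le u_n\le 1$), introduce an equivalent probability measure $\tilde\mm\ll\mm\ll\tilde\mm$, observe that $(u_n)$ is bounded in $L^2(\tilde\mm)$, extract a weakly convergent subsequence, and apply Mazur's lemma to get convex combinations $v_m\to v$ strongly in $L^2(\tilde\mm)$, hence in $L^1_{\rm loc}(\mm)$. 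Lower semicontinuity then yields $\|v\|_{BV(\X)}\le C_{\rm Per}'\|u\|_{BV(\Omega)}$ and $v|_\Omega=u$. The passage from $BV\cap L^\infty$ to general $BV$ is done by slicing $u=\sum_i\varphi_i\circ u$ with each $\varphi_i\circ u$ bounded, not by the $u^+/u^-$ split.
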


The connection between $W^{1,1}$-extensions and BV-extensions was studied by Garc\'{\i}a-Bravo and the third named author in \cite{BR21}. There a crucial role was played by the strong versions of $BV$- and perimeter extensions. In these versions, one requires the extension $Eu$ to give zero variation measure to the boundary of $\Omega$. See again Section \ref{sec:extension_properties} for the precise definitions. The statement from \cite{BR21} that we will generalize here is the following. 
\begin{theorem}[Garc\'{\i}a-Bravo and Rajala {\cite[Thm.\ 1.3]{BR21}}]\label{thm:GR}
Let $\Omega \subset \mathbb R^n$ be a bounded domain. Then the following are equivalent:
\begin{enumerate}
    \item $\Omega$ is a $W^{1,1}$-extension domain.
    \item $\Omega$ is a strong $BV$-extension domain.
    \item $\Omega$ has the strong extension property for sets of finite perimeter.
\end{enumerate}
\end{theorem}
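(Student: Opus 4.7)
The plan is to prove the three equivalences via $(2) \Leftrightarrow (3)$, $(1) \Rightarrow (3)$, and $(2) \Rightarrow (1)$, using the coarea formula for $BV$ functions as the main technical engine and adapting the architecture of the Burago--Maz'ya argument (Theorem~\ref{thm:BM}) to the strong setting.

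I would begin with $(2) \Leftrightarrow (3)$, the strong-version analogue of Theorem~\ref{thm:BM}. For $(2)\Rightarrow(3)$, given $E \subset \Omega$ with $P(E,\Omega) < \infty$, apply the strong $BV$-extension to $\chi_E$ to obtain $v \in BV(\mathbb{R}^n)$ with $v|_\Omega = \chi_E$, $|Dv|(\mathbb{R}^n) \le C\,P(E,\Omega)$, and $|Dv|(\partial\Omega) = 0$; the coarea formula, applied both on $\mathbb{R}^n$ and restricted to $\partial\Omega$, produces a level $t \in (0,1)$ for which $P(\{v>t\},\mathbb{R}^n)$ is controlled and $P(\{v>t\},\partial\Omega) = 0$, and $F := \{v>t\}$ satisfies $F \cap \Omega = E$, giving the strong perimeter extension. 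For $(3)\Rightarrow(2)$, decompose $u \in BV(\Omega)$ via layer cake into super-level sets, extend each $\{u>t\}$ by the strong perimeter extension (with a measurable selection in $t$ via standard selection theorems), and reassemble by integration; coarea delivers the $BV$-norm bound, and the strong property passes to the integrated extension since $P(F_t,\partial\Omega) = 0$ for each $t$ in the selection.

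For $(1)\Rightarrow(3)$, first establish that a $W^{1,1}$-extension domain satisfies $|\partial\Omega| = 0$, which follows by extending an appropriate cutoff and invoking absolute continuity of the extended gradient. Given $E \subset \Omega$ of finite perimeter, approximate $\chi_E$ strictly by smooth $u_n \in W^{1,1}(\Omega)$, extend to $Eu_n \in W^{1,1}(\mathbb{R}^n)$ with uniform $W^{1,1}$ bound, and pass to a $BV$-weak-$*$ limit $v$; then a careful coarea level-set selection produces a strong perimeter extension of $E$. For $(2)\Rightarrow(1)$, given $u \in W^{1,1}(\Omega)$ I would not apply the $BV$-extension operator directly, but rather reconstruct $Eu$ through the $(3)\Rightarrow(2)$ layer-cake using a \emph{monotone} (in $t$) measurable selection of strong perimeter extensions $F_t$ of $\{u>t\}$; such a selection can be engineered from arbitrary extensions by a monotonization procedure that preserves the strong property and the quantitative perimeter bound. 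The coarea identity $|DEu| = \int P(F_t, \cdot)\,dt$ then holds as equality on all of $\mathbb{R}^n$, and the $L^1$-integrability of $t \mapsto P(\{u>t\},\Omega)$ coming from $u \in W^{1,1}$ transfers to $t \mapsto P(F_t,\mathbb{R}^n)$ via the perimeter extension bound, giving $|DEu| \ll \mathcal{L}^n$.

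The main obstacle is the implication $(2)\Rightarrow(1)$: producing a monotone measurable family of strong perimeter extensions of the super-level sets and verifying that the reassembled function is $W^{1,1}$ rather than merely strong $BV$. Without monotonicity the singular part of $DEu$ on $\mathbb{R}^n \setminus \overline\Omega$ need not vanish despite the strong property on $\partial\Omega$. A secondary obstacle is the delicate preservation of the zero-boundary-mass property under the weak-$*$ limit in $(1)\Rightarrow(3)$, which I would handle by staying at the level of characteristic functions and exploiting $|\partial\Omega|=0$ together with the absolute continuity of each approximating Sobolev extension.
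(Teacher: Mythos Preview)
Your argument for $(2)\Rightarrow(1)$ has a genuine gap. You claim that once a monotone family $(F_t)$ of strong perimeter extensions of $\{u>t\}$ is chosen, the layer-cake function $Eu(x)=\int \chi_{F_t}(x)\,dt$ satisfies $|DEu|\ll\mathcal L^n$ because $t\mapsto P(F_t,\mathbb R^n)$ is integrable. But integrability of $t\mapsto P(F_t,\mathbb R^n)$ only gives $|DEu|(\mathbb R^n)<\infty$; it says nothing about absolute continuity. Inside $\Omega$ the level sets of $u$ move absolutely continuously in $t$ because $u\in W^{1,1}(\Omega)$, but outside $\overline\Omega$ nothing in the strong perimeter extension property, nor in monotonicity, prevents $\partial F_t\cap(\mathbb R^n\setminus\overline\Omega)$ from being constant on a set of $t$'s of positive measure --- this would produce a jump part of $DEu$ in $\mathbb R^n\setminus\overline\Omega$. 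The ``monotonization procedure'' you allude to does not address this, and in fact cannot: the strong condition controls only $P(F_t,\partial\Omega)$, not the regularity of $\partial F_t$ away from $\partial\Omega$.

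The paper (following \cite{BR21}) takes a completely different route for $(2)\Rightarrow(1)$: it builds a \emph{smoothing operator} $T_\varepsilon\colon BV(\Omega)\to \mathrm{Lip}_{\mathrm{loc}}(\Omega)$ (Proposition~\ref{prop:smoothing}) that replaces a $BV$ function by a locally Lipschitz one with comparable norm and with $|D(T_\varepsilon u-u)|(\partial\Omega)=0$. Given $u\in W^{1,1}(\Omega)$, one applies the strong $BV$-extension to get $Fu\in BV(\mathbb R^n)$, then smooths $Fu$ on the open complement $\mathbb R^n\setminus\overline\Omega$, and glues across $\partial\Omega$ using the strong property (Lemma~\ref{lemma:fromlocal_to_global_W11}). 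The Sobolev regularity of the extension is thus obtained by direct construction rather than by hoping the coarea reassembly avoids singular parts. In \cite{BR21} the smoothing is done via a Whitney decomposition and Poincar\'e inequality; the present paper replaces this by a partition of unity on annular strips near $\partial\Omega$ combined with the Lipschitz approximants from the definition of total variation, which is what allows the argument to work without a PI-assumption. Your $(2)\Leftrightarrow(3)$ sketch is essentially the right idea and matches Proposition~\ref{prop:strong_equivalence}; your $(1)\Rightarrow(3)$ is plausible in $\mathbb R^n$ but is not the route the paper takes (it goes $(W^{1,1})\Rightarrow(W_w^{1,1})\Rightarrow(\text{s-}BV)\Leftrightarrow(\text{s-Per})$, the middle step using $\mathfrak m(\partial\Omega)=0$).
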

Similarly to Theorem \ref{thm:BM}, also Theorem \ref{thm:GR} fails in general metric measure spaces. The reason is the same: failure of suitable compactness, and the counterexample is the same, Example \ref{ex:BVfail}. Therefore, we will state our result here with the full norm, analogously to Theorem \ref{thm:BV}. However, there are two other issues that arise in the general metric measure space setting. Firstly, the boundary of a Sobolev extension does not have in general measure zero. Recall that in PI-spaces, the measure density of extension domains holds and it implies via a density point argument that the boundary has measure zero \cite{HKT2008a,HKT2008b}. Secondly, there are several definitions of $W^{1,1}$ in metric measure spaces. Some of those definitions are not equivalent \cite{Ambrosio-DiMarino14} and for some the equivalence is still open.

We will state our results for two definitions of $W^{1,1}$. One definition is given via $\infty$-test plans, see Definition \ref{def:W11}. We denote the space of Sobolev functions given by this definition simply by $W^{1,1}(\X)$. The second definition we consider is $W_w^{1,1}(\X)$ which consists of $u \in BV(\X)$ for which $|D u|\ll \mm$. The third and the most studied definition would be the Newtonian Sobolev space $N^{1,1}(\X)$. Since we are aware of a work in progress where the equivalence of $N^{1,1}(\X)$ and $W^{1,1}(\X)$ will be shown, we will not separately consider extensions with respect to $N^{1,1}(\X)$, but only remark that in our results one can replace $W^{1,1}(\X)$ by $N^{1,1}(\X)$ and $W^{1,1}(\Omega)$ by $N^{1,1}(\Omega)$ once this equivalence is proven.

For an open set $\Omega \subset \X$ let us consider the following claims:
\begin{enumerate}
    \item[(s-Per)] $\Omega$ has the strong extension property for sets of finite perimeter with the full norm.
    \item[(s-BV)] $\Omega$ has the strong $BV$-extension property.
    \item[($W^{1,1}$)] $\Omega$ has the $W^{1,1}$-extension property.
    \item[($W_w^{1,1}$)] $\Omega$ has the $W_w^{1,1}$-extension property.
\end{enumerate}
Under the assumption that the boundary of the open set has measure zero, we have the full equivalence between the above properties.
\begin{theorem}\label{thm:boundaryzero}
Let $\Omega \subset \X$ be open and bounded with $\mm(\partial\Omega)=0$. 
Then
\[
(\text{s-Per}) \Longleftrightarrow (\text{s-BV}) \Longleftrightarrow (W^{1,1}) \Longleftrightarrow (W_w^{1,1}).
\]
\end{theorem}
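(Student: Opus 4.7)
My plan is to prove the equivalence via the skeleton $(\text{s-Per}) \Leftrightarrow (\text{s-BV})$ together with the four implications $(\text{s-BV}) \Rightarrow (W^{1,1})$, $(\text{s-BV}) \Rightarrow (W_w^{1,1})$, $(W^{1,1}) \Rightarrow (\text{s-Per})$ and $(W_w^{1,1}) \Rightarrow (\text{s-Per})$. The equivalence $(\text{s-Per}) \Leftrightarrow (\text{s-BV})$ does not need the boundary-zero hypothesis and follows the argument for Theorem \ref{thm:BV}, now tracking the strong property through the coarea formula. From (s-BV) to (s-Per): given $E \subset \Omega$ of finite perimeter, extend $\mathbf{1}_E$ to $v \in BV(\X)$ with $|Dv|(\partial\Omega)=0$; the coarea identity $|Dv|(\partial\Omega) = \int_0^\infty P(\{v>t\},\partial\Omega)\,\d t = 0$ then furnishes a level $t \in (0,1)$ for which $F_t := \{v>t\}$ extends $E$, has controlled full perimeter, and satisfies $P(F_t,\partial\Omega)=0$. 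Conversely, for bounded non-negative $u \in BV(\Omega)$ (general case by truncation and linearity), I extend each superlevel set $\{u>t\}$ via (s-Per) and reassemble by Cavalieri; coarea and Fubini control both the full BV norm and the vanishing of variation on $\partial\Omega$.

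\textbf{From (s-BV) to the Sobolev variants.} Given $u$ in $W^{1,1}(\Omega)$ or $W_w^{1,1}(\Omega)$, the strong BV extension produces $v \in BV(\X)$ with $v|_{\Omega}=u$, $\|v\|_{BV(\X)} \le C\|u\|_{BV(\Omega)}$ and $|Dv|(\partial\Omega)=0$. Because $\mm(\partial\Omega)=0$, the variation carries no mass on $\partial\Omega$ and inside $\Omega$ equals $|Du|$, which lies in the absolutely continuous (resp.\ $\infty$-test plan) class by hypothesis on $u$. The remaining task is to arrange matching behavior on $\X \setminus \overline\Omega$. My plan is to replace $v$ on the exterior by the level-set reassembly produced by $(\text{s-BV}) \Leftrightarrow (\text{s-Per})$ applied to the superlevel sets of $u$: each $\{u>t\}$ is of finite perimeter in $\Omega$ for a.e.\ $t$ (coarea applied to $u$), and (s-Per) extends it to $F_t$ with $P(F_t,\partial\Omega)=0$. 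The coarea formula for the reassembled $Eu$ then gives $|D(Eu)|(A) = \int P(F_t,A)\,\d t$, and the strong-perimeter property combined with $\mm(\partial\Omega)=0$ rules out singular variation on $\partial\Omega$. Transferring the absolute continuity (respectively, test-plan compatibility) of $|Du|$ through this gluing, using that the boundary has zero $\mm$-measure, yields $Eu$ in the desired Sobolev class.

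\textbf{The main obstacle: Sobolev extension to (s-Per).} This is the hardest step, since the Sobolev extension hypotheses only handle functions with absolutely continuous variation whereas (s-Per) asks about characteristic functions of arbitrary finite-perimeter sets. My plan is by approximation: given $E \subset \Omega$ of finite perimeter, approximate $\mathbf{1}_E$ in strict BV topology by $u_n \in W^{1,1}(\Omega) \subset W_w^{1,1}(\Omega)$ (via Lipschitz truncations or discrete mollifications supported inside $\Omega$), apply the available Sobolev extension to produce $v_n \in BV(\X)$ with $\|v_n\|_{BV(\X)}\le C\|u_n\|_{W^{1,1}(\Omega)}$, and note that $|Dv_n|(\partial\Omega)=0$ automatically, since $|Dv_n|\ll\mm$ and $\mm(\partial\Omega)=0$. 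By coarea, for a.e.\ level $t$ the set $F_n^t=\{v_n>t\}$ has finite perimeter with $P(F_n^t,\partial\Omega)=0$. A diagonal extraction yielding $L^1_\loc$-convergent subsequences produces a limit set $F$ extending $E$ with controlled perimeter and the strong property. Two technical obstacles must be addressed: first, $L^1_\loc$-compactness of BV-bounded sequences can fail without a PI-assumption (cf.\ Remark \ref{rmk:compactness} and Example \ref{ex:BVfail}), forcing an ad hoc localization exploiting the bounded support of $u$; second, the vanishing of variation on $\partial\Omega$ must survive the passage to the limit, and this is precisely where the hypothesis $\mm(\partial\Omega)=0$ is indispensable, since it rules out weak-$*$ concentration of the absolutely continuous measures $|Dv_n|$ on the boundary in the limit. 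Once (s-Per) is established, (s-BV) follows from the first paragraph, closing the equivalence.
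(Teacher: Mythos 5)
Your overall skeleton (a cycle of implications plus the s-Per/s-BV equivalence) is compatible with the paper, but two of your steps contain genuine gaps, and both trace back to the same missing ingredient: the paper's smoothing operator $T_\varepsilon\colon BV\to{\rm Lip}_{\rm loc}$ of Proposition \ref{prop:smoothing} and the gluing Lemma \ref{lemma:fromlocal_to_global_W11}. First, in (s-BV) $\Rightarrow$ ($W^{1,1}$)/($W_w^{1,1}$) you propose to build the exterior part of the extension by the level-set reassembly coming from (s-Per), and then to ``transfer the test-plan compatibility of $|Du|$ through this gluing, using that the boundary has zero $\mm$-measure.'' Neither half of this works as stated: a Cavalieri reassembly of perimeter extensions is only a $BV$ function, and nothing makes its variation on $\X\setminus\bar\Omega$ absolutely continuous, let alone a $1$-weak upper gradient in the $\infty$-test-plan sense (recall $W^{1,1}\subsetneq W^{1,1}_w$ in general, Examples \ref{ex:disk}, \ref{ex:slit}). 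The paper instead smooths the $BV$ extension on $\X\setminus\bar\Omega$ with $T_\varepsilon$ to get a locally Lipschitz exterior piece, and then the passage across $\partial\Omega$ is not a consequence of $\mm(\partial\Omega)=0$: it requires the curve-wise argument of Lemma \ref{lemma:fromlocal_to_global_W11}, which uses $|Df|(\partial\Omega)=0$ through the characterization of Theorem \ref{thm:BVequiv} (so that $|D(f\circ\gamma)|(\gamma^{-1}(\partial\Omega))=0$ for $1$-a.e.\ curve), plus a covering argument by rational time-intervals. ``The variation inside $\Omega$ equals $|Du|$'' is also not enough to give the test-plan inequality for curves that cross the boundary.

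Second, your closing step ($W^{1,1}$)/($W_w^{1,1}$) $\Rightarrow$ (s-Per) by approximating $\chi_E$ with Sobolev functions, extending, and passing to the limit has two unfixed problems that you partly name but do not resolve. The compactness failure is not about unbounded support: Example \ref{ex:BVfail} is a bounded configuration, and the obstruction is the degenerate measure, so ``ad hoc localization exploiting the bounded support'' does not address it (the paper's substitute, weak $L^2(\tilde\mm)$-compactness plus Mazur, destroys the characteristic-function structure, which is acceptable only because one then re-selects a level set of the limit). More seriously, your assertion that $\mm(\partial\Omega)=0$ ``rules out weak-$*$ concentration of the absolutely continuous measures $|Dv_n|$ on the boundary in the limit'' is false: absolutely continuous measures can perfectly well converge to a measure charging an $\mm$-null set (mollified surface measures do exactly this), and nothing in the $W^{1,1}$- or $W^{1,1}_w$-extension hypothesis prevents the operator from piling variation into arbitrarily small neighbourhoods of $\partial\Omega$. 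The paper avoids both issues by never taking limits in this direction: it proves ($W_w^{1,1}$) $\Rightarrow$ (s-BV) (Proposition \ref{prop:from_wW11_to_sBV}) with a single-function construction $\widetilde Eu=u$ on $\Omega$, $E T_\varepsilon u$ outside, where the boundary terms vanish because $|D E T_\varepsilon u|\ll\mm$ with $\mm(\partial\Omega)=0$ (one fixed measure, no limit) and $|D((u-T_\varepsilon u)\chi_\Omega)|(\partial\Omega)=0$ from the smoothing property \eqref{eq:notchargingtheboundary}; (s-Per) is then reached through Proposition \ref{prop:strong_equivalence}, whose ``if'' direction itself needs the Lusin/Egorov-type uniform selection of levels (Lemma \ref{lemma:lusin_uniform_convergence}) — a point your first paragraph also glosses over when claiming ``coarea and Fubini'' control the boundary variation of the reassembled limit.
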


If the boundary of the open set has positive measure, it might happen that the open set has the $W_w^{1,1}$-extension property, but not the $W^{1,1}$-extension property (nor the strong BV-extension property), see Example \ref{ex:slit}.
Moreover, an open set can have the $W^{1,1}$-extension property without having the strong $BV$-extension property, see Example \ref{ex:W11nonsBVcheat} and Example \ref{ex:W11nonsBV}.
The remaining implications excluded by the above examples do hold:

\begin{theorem}\label{thm:general}
Let $\Omega \subset \X$ be open and bounded. 
Then
\[
(\text{s-Per}) \Longleftrightarrow (\text{s-BV}) \Longrightarrow (W^{1,1}) \Longrightarrow (W_w^{1,1}).
\]
\end{theorem}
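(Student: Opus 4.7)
Theorem \ref{thm:general} consists of three distinct claims: the equivalence $(s\text{-Per})\Longleftrightarrow(s\text{-BV})$, the implication $(s\text{-BV})\Longrightarrow(W^{1,1})$, and the implication $(W^{1,1})\Longrightarrow(W_w^{1,1})$. We sketch each, expecting the second to be the main technical bottleneck.

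\textbf{Equivalence of $(s\text{-Per})$ and $(s\text{-BV})$.} The strategy mirrors that of Theorem \ref{thm:BV}. The direction $(s\text{-BV})\Longrightarrow(s\text{-Per})$ proceeds by applying the extension operator to $\chi_E\in BV(\Omega)\cap L^\infty$ for a finite-perimeter set $E\subset\Omega$; choosing a good threshold $t\in(0,1)$ via coarea yields a set $F\subset\X$ with $P(F,\X)\le C\|\chi_E\|_{BV(\Omega)}$, $F\cap\Omega=E$, and $P(F,\partial\Omega)=0$. For the converse, given $u\in BV(\Omega)$, make a measurable choice (in $t$) of strong perimeter extensions $F_t\supset\{u>t\}$ satisfying $P(F_t,\X)\le C(P(\{u>t\},\Omega)+\mm(\{u>t\}))$ and $P(F_t,\partial\Omega)=0$, and set
\[
Eu(x)=\int_0^\infty\chi_{F_t}(x)\,dt-\int_{-\infty}^0\chi_{\X\setminus F_t}(x)\,dt.
\]
Coarea applied to both $u$ and $Eu$ then gives $\|Eu\|_{BV(\X)}\le C\|u\|_{BV(\Omega)}$ and $|DEu|(\partial\Omega)\le\int P(F_t,\partial\Omega)\,dt=0$.

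\textbf{From $(s\text{-BV})$ to $(W^{1,1})$.} Given $u\in W^{1,1}(\Omega)$, apply the strong BV extension to obtain $Eu\in BV(\X)$ with $|DEu|\llcorner\Omega=|Du|\ll\mm$ and $|DEu|(\partial\Omega)=0$. The remaining obstacle is to show $|DEu|\llcorner(\X\setminus\overline\Omega)\ll\mm$. The plan is to sharpen the level-set construction above and exploit the coarea formula for $u\in W^{1,1}(\Omega)$, which gives $\int P(\{u>t\},A)\,dt=\int_A|\nabla u|\,d\mm$ for Borel $A\subset\Omega$, to propagate absolute continuity to the reassembled extension. Membership in $W^{1,1}(\X)$ is then verified through the $\infty$-test plan definition: along $\ppi$-a.e.\ curve $\gamma$, $Eu\circ\gamma$ is absolutely continuous on $\gamma^{-1}(\Omega)$ by hypothesis and has no jumps across $\partial\Omega$ by $|DEu|(\partial\Omega)=0$, and an $L^1$ upper gradient valid on all of $\X$ is produced from the coarea decomposition. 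Guaranteeing absolute continuity outside $\overline\Omega$ is the technical heart of the argument, since no Poincar\'e or doubling hypothesis is available to mollify across the boundary.

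\textbf{From $(W^{1,1})$ to $(W_w^{1,1})$.} The inclusion $W^{1,1}(\X)\subset W_w^{1,1}(\X)$ is immediate from the definitions: a function in the $\infty$-test plan Sobolev space belongs to $BV(\X)$ with $|Du|\ll\mm$. Given $u\in W_w^{1,1}(\Omega)$, approximate $u$ in $L^1(\Omega)$ by a sequence $u_n\in W^{1,1}(\Omega)$ with $\|u_n\|_{W^{1,1}(\Omega)}\le C\|u\|_{W_w^{1,1}(\Omega)}$, apply the $W^{1,1}$-extension operator to each $u_n$, and extract an $L^1_{\loc}$-limit $Eu\in BV(\X)$ from the uniform BV bound. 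Lower semicontinuity of the total variation together with the absolute continuity of each $|DEu_n|$ then yields $|DEu|\ll\mm$, so $Eu$ is a $W_w^{1,1}$ extension of $u$; the density of $W^{1,1}$ in $W_w^{1,1}$ on $\Omega$ is the delicate point and is handled via truncation plus standard diagonal arguments.
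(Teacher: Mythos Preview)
Your approach to the equivalence $(s\text{-Per})\Longleftrightarrow(s\text{-BV})$ is in the right spirit but glosses over real technicalities: the extension property gives no canonical $F_t$, so a measurable selection in $t$ is not automatic; moreover the family $(F_t)_t$ need not be nested, so $\{Eu>t\}\neq F_t$ in general and you cannot read off $|DEu|(\partial\Omega)=0$ directly from coarea. The paper instead passes through discrete approximations $u_n=\sum_j 2^{-n}\chi_{\tilde E_{t_j}}$ with carefully chosen thresholds $t_j$, uses Mazur's lemma to upgrade weak to strong $L^1_{\textrm{loc}}$ convergence, and invokes a Lusin-type uniform-convergence lemma to control the perimeter charge on shrinking neighborhoods of $\partial\Omega$.

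The genuine gaps are in the second and third implications. For $(s\text{-BV})\Longrightarrow(W^{1,1})$, your plan to ``propagate absolute continuity to the reassembled extension'' via coarea cannot work: the strong perimeter extension of a superlevel set carries no Sobolev structure on $\X\setminus\overline\Omega$, and reassembling such extensions does not yield a function whose variation measure is absolutely continuous there, let alone one satisfying the $\infty$-test-plan definition. For $(W^{1,1})\Longrightarrow(W_w^{1,1})$, your limiting step is simply false: lower semicontinuity of the total variation on open sets does \emph{not} preserve absolute continuity with respect to $\mm$ (think of $|Df_n|=n\chi_{[0,1/n]}\mathcal L^1\rightharpoonup\delta_0$), so there is no reason the limit $Eu$ should lie in $W_w^{1,1}(\X)$. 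The ``density of $W^{1,1}$ in $W_w^{1,1}$'' you flag as delicate is in fact not what is needed.

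What is missing in both cases is the paper's central device: a smoothing operator $T_\varepsilon\colon BV(U)\to\mathrm{Lip}_{\textrm{loc}}(U)$, built from a Lipschitz partition of unity on dyadic strips $\{2^{-(i+1)}<\sfd(\cdot,U^c)<2^{-(i-1)}\}$ together with the locally Lipschitz approximants realizing the total variation on each strip, satisfying $\int_U\lip T_\varepsilon u\,\d\mm\le C(|Du|(U)+\varepsilon)$ and, crucially, $|D((T_\varepsilon u-u)\chi_U)|(\partial U)=0$. For $(W^{1,1})\Rightarrow(W_w^{1,1})$ one smooths $u\in W_w^{1,1}(\Omega)\subset BV(\Omega)$ \emph{inside} $\Omega$, extends $T_\varepsilon u\in W^{1,1}(\Omega)$, and defines $\tilde Eu=u$ on $\Omega$ and $ET_\varepsilon u$ outside; writing $\tilde Eu=(u-T_\varepsilon u)\chi_\Omega+ET_\varepsilon u$ and using the boundary condition gives $|D\tilde Eu|\ll\mm$ directly, with no limit involved. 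For $(s\text{-BV})\Rightarrow(W^{1,1})$ one instead smooths the BV extension on the open set $\X\setminus\overline\Omega$, and then invokes a gluing lemma showing that a $BV$ function which is $W^{1,1}$ on $\Omega$ and on $\X\setminus\overline\Omega$ and has $|Df|(\partial\Omega)=0$ is in $W^{1,1}(\X)$. Without this smoothing operator neither implication goes through in the generality claimed.
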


In the proofs of Theorem \ref{thm:boundaryzero} and Theorem \ref{thm:general} we need to change a BV-function into a Sobolev one without changing the boundary values of the function nor increasing the norm by much. In the proof of Theorem \ref{thm:GR} in \cite{BR21} this was done via a smoothing operator that was constructed using a Whitney decomposition and a partition of unity. In our proofs this approach does not work since a direct use of a partition of unity would require the Poincar\'e inequality. Instead, we make the modification individually for each function using the converging Lipschitz-functions given by the definition of the BV-space, see Proposition \ref{prop:smoothing}.

\section{Preliminaries and notations}
We assume throughout all this presentation that $(\X,\sfd,\mm)$ is a metric measure space, so that $(\X,\sfd)$ is a complete and separable metric space and $\mm$ in a nonnegative Borel measure which is finite on bounded sets.

Given a center point $x \in \X$ and a radius $r >0$, we denote the open ball by $B(x,r):= \{ y \in \X: \sfd(x,y) < r\}$.
We denote by $\mathscr{B}(\X)$ the collection of Borel subsets of $\X$, $\chi_A$ the indicator function of a set $A$ and $\mathcal{L}^d$ the Lebesgue measure on $\mathbb{R}^d$.
Given a set $A\subset \X$ and $r >0$, we denote the open $r$-neighbourhood of $A$ by $B(A,r):=\bigcup_{x \in A} B(x,r)$.
We recall the definition of the \emph{slope} $\lip f\colon \X \to \mathbb{R}^+$ of a function $f \colon \X \to \mathbb{R}$ given by 
\[ \lip f(x):= \lims_{y \to x} \frac{|f(y)-f(x)|}{\sfd(x,y)}\]
with the convention that $\lip f(x) = 0$ if $x \in \X$ is an isolated point. 

We denote by $\mathcal{L}^0(\mm)$ the space of $\mm$-measurable functions. Given $p \in [1,+\infty)$, we set \( \mathcal{L}^p(\mm):=\{ f \in \mathcal{L}^0(\mm): \int |f|^p\,d \mm < \infty \}\) and \( \mathcal{L}^\infty(\mm):= \{ f \in \mathcal{L}^\infty(\mm): \mm\textrm{-esssup}\, f < \infty\} \). We denote $L^p(\mm):= \mathcal{L}^p(\mm) /_{\sim}$ for $p \in \{0\} \cup [1,\infty]$, where $\sim$ is the equivalence relation given by $\mm$-a.e.\ equality.
We denote, for $p \in [1,\infty]$, $L^p_{\textrm{loc}}(\X):= \{ f \in L^0(\mm):\, \forall x\in \X\text{ there exists an open set }U \ni x\text{ s.t.\ }f \in L^p(\mm\restr{U})\}$. Similarly, we say, given $f_n,f\in L^p_{\textrm{loc}}(\X)$, that $f_n \to f\in L^p_{\textrm{loc}}(\X)$, provided that for every $x\in\X$, there exists an open set $U \ni x$ such that $f_n \to f\in L^p(\mm\restr{U})$.

Given an open set $U\subset X$, we define \[
\Gamma(U):=C([0,1],U)= \{ \gamma \colon [0,1] \to U,\, \gamma \text{ is continuous}\}
\]
which is a separable metric space when endowed with the sup distance. In the case in which $U = \X$ the space $\Gamma(\X)$ is also complete. We define, for $p \in [1,\infty]$, the set of $p$-absolutely continuous curves, $AC^p([0,1],U) \subset \Gamma(U)$ consisting of all $\gamma \in \Gamma(U)$ for which there exists $0 \le g \in L^p([0,1])$ such that
\[ \sfd(\gamma_t,\gamma_s) \le \int_t^s g(r)\,\d r\qquad\text{for every }0\le t \le s \le 1.\]
In this case, 
\[ |\gamma'_t|:=\lim_{h\rightarrow 0}\frac{\sfd(\gamma_{t+h},\gamma_t)}{|h|} \qquad \text{exists for a.e. }t \in [0,1]\]
and $|\gamma'_t| \in L^p([0,1])$.\\
We recall the definition of the evaluation map $\e_t \colon \Gamma(U) \to U$ as $\e_t(\gamma):=\gamma_t$, and note that it is continuous.
We denote by ${\rm Lip}(\gamma)$ the global Lipschitz constant of a  curve $\gamma \in AC^{\infty}([0,1],U)$
\[
{\rm Lip}(\gamma) = \sup_{t \ne s}\frac{\sfd(\gamma(t),\gamma(s))}{|t-s|}.
\]
Given a metric space $\Y$, we denote by $\mathscr{P}(\Y)$ the set of Borel probability measures on $\Y$. We recall the definition of a $\infty$-test plan (see \cite{Ambrosio-DiMarino14}).
\begin{definition}[Test plan on $U$]
A measure $\ppi \in \mathscr{P}(\Gamma(U))$ is a $\infty$-test plan if:
\begin{itemize}
    \item[i)] $\ppi$ is concentranted on $AC^{\infty}([0,1],U)$ and $(\gamma \mapsto {\rm Lip}(\gamma)) \in L^\infty(\ppi)$;
    \item[ii)] there exists ${\sf C} = {\sf C}(\ppi)$ such that ${\e_t}_*\ppi \le {\sf C} \mm$ for every $t \in [0,1]$.
\end{itemize}
\end{definition}
We call ${\sf C}$ the compression constant of $\ppi$ and we define ${\rm Lip}(\ppi):= \| {\rm Lip}(\gamma)\|_{L^\infty(\ppi)}$. Given an open set $U \subset \X$, for any $s,t \in [0,1]$ with $s<t$, we define the restriction map ${\rm restr}_{s,t} \colon C([0,1],U) \to C([0,1],U)$ as ${\rm restr}_{s,t}(\gamma)_r:= \gamma_{(1-r)s+rt}$ for $r \in [0,1]$. Notice that ${\rm restr}_{s,t}$ is continuous. A set $\Gamma \subset \Gamma(\X)$ is said to be $1$-negligible, if $\ppi(\Gamma) =0$ for every $\infty$-test plan $\ppi$. A property holds $1$-a.e.\ if the set where it does not hold is $1$-negligible.

\subsection{BV functions and sets of finite perimeter in metric measure spaces}
We define the space of functions of bounded variation.
\begin{definition}[Total variation]
Let $(\X,\sfd,\mm)$ be a metric measure space. Consider $f \in L^1_{\rm loc}(\X)$. Given an open set $A \subset \X$, we define
\begin{equation*}
    |D f|(A):= \inf \left\{ \limi_n \int_A \lip f_n\, \d \mm:\, f_n \in {\rm Lip}_{\rm loc}(A),\, f_n \to f \in L^1_{\rm loc}(\mm \restr{A}) \right\}. 
\end{equation*}
\end{definition}
We extend $|D f|$ to all Borel sets as follows: given $B \in \mathscr{B}(\X)$, we define
\[ |D f|(B):= \inf \left\{ |D f|(A), B \subset A, A \text{ is an open set}\right\}. \]
With this construction, $|D f| \colon \mathscr{B}(\X) \to [0,\infty)$ is a Borel measure, called the \emph{total variation measure} of $f$ (\cite[Thm.\ 3.4]{Mir03}).
It follows from the definition of total variation that, given an open set $A \subset \X$
\begin{equation}
    \label{eq:lsc_totvar_open}
    f_n \to f \quad \text{in }L^1_{\textrm{loc}}(A) \qquad \Rightarrow \qquad |D f|(A) \le \limi_{n \to \infty} |D f_n|(A).
\end{equation}
Given a Borel set $B\subset \X$ and $u \in L^1_{\textrm{loc}}(B)$, we introduce the notation $|D u|_B$ to mean the total variation of $u$ computed in the metric measure space $(\X,\sfd,\mm\restr{B})$.
\begin{definition}[$\ca{BV}(B)$ and $BV(B)$]
Let $(\X,\sfd,\mm)$ be a metric measure space. Let $B \subset \X$ be Borel. 
Given $u \in L^1_{\textrm{loc}}(\mm\restr{B})$, we define the space $\ca{BV}(B)$ to be set of functions $u \in L^1_{\textrm{loc}}(\mm\restr{B})$ for which $|D u|_B(B)<\infty$. We define $BV(B):=\{ u \in L^1(\mm\restr{B}):\,|D u|_B(B)<\infty \}$.
We endow the space $BV(B)$ with the norm $\| u \|_{BV(B)}:=\| u \|_{L^1(\mm)}+|D u|_B(B)$. Similarly, we endow the space $\ca{BV}(B)$ with the seminorm $\| u \|_{\ca{BV}(B)}:= |D u|_B(B)$.
\end{definition}
\begin{remark}
Notice that in the case of $B$ being open, the definition of $\ca{BV}(B)$ above is equivalent to saying that, given $f\in L^1_{\textrm{loc}}(\mm\restr{B})$, $|D\tilde{f}|(B) <\infty$, where $\tilde{f} \in L^1(\mm)$ is given by the zero extension and $|D\tilde{f}|(B) = |Df|_B(B)$.
Similarly, $f \in BV(B)$ if the last property holds and $f \in L^1(\mm\restr{B})$.
\end{remark}
%
%
%
\begin{remark}
We point out that, in the case of $f \in L^1(\mm)$, it is possible to define $|D f|(A)$ for an open set $A \subset \X$ by means of a relaxation with respect to the $L^1$-topology, namely
\[ |D f|(A):= \inf \left\{ \limi_n \int_A \lip f_n\, \d \mm:\, f_n \in {\rm Lip}_{\textrm{loc}}(A),\, f_n \to f \in L^1(\mm \restr{A}) \right\}. \]
\end{remark}
As a consequence of the lower semicontinuity of total variation, it can be readily checked that, given an open set $\Omega \subset \X$, $(BV(\Omega),\|\cdot\|_{BV(\Omega)})$ is a Banach space.
\begin{remark}
\label{rem:composition_lipschitz_functions}
Let $f \in \ca{BV}(\X)$ and $\varphi \colon \mathbb{R} \to \mathbb{R}$ be $L$-Lipschitz. Then, by the definition of total variation, we have $\varphi \circ f \in \ca{BV}(\X)$ and
\[ |D(\varphi \circ f)|(\X) \le L |D f|(\X). \]
\end{remark}
In particular, it follows by the definition that, if $U$ is open and bounded and $f$ is Lipschitz, then $f \in BV(U)$ with
\begin{equation}
\label{eq:bound_tot_variation_fLip}
    |D f|(U) \le \int_U \lip f\,\d \mm.
\end{equation}
\begin{definition}[Sets of finite perimeter]
We say that $E\in \mathscr{B}(\X)$ is a set of finite perimeter if $\chi_E \in \ca{BV}(\X)$ and we denote $P(E,B):=|D \chi_E|(B)$ for $B \in \mathscr{B}(\X)$, which is called \emph{the perimeter of $E$ in B}.
\end{definition} 
In particular, we call $P(E,\X)=:P(E)$ the perimeter of $E$.
We list here some useful properties of the perimeter. The validity of i),iii),iv) follows from the definition of perimeter and ii) by a diagonal argument (see \cite[Prop.\ 3.6]{Mir03}).
\begin{proposition}[Properties of the perimeter]
\label{prop:per_properties}
Let $(\X,\sfd,\mm)$ be a metric measure space. Consider two sets of finite perimeter $E$ and $F$. Let $U \subset \X$ be open and let $B \subset \X$ be Borel. Then:
\begin{itemize}
    \item[i)] \emph{Locality}. If $\mm((E \Delta F) \cap B)=0$, then 
    \begin{equation}
    \label{eq:locality_perimeter}
        P(E,B) = P(F,B);
    \end{equation}
    \item[ii)] \emph{Lower semicontinuity}. For every open set $U \subset \X$, the function $E \mapsto P(E,U)$ is lower semicontinuous with respect to $L^1_{\textrm{loc}}(\mm)$-topology, namely if $\chi_{E_n} \to \chi_E$ in $L^1_{\textrm{loc}}(\mm)$, then $P(E,U) \le \limi_n P(E_n,U)$;
    \item[iii)] \emph{Submodularity}. It holds $P(E \cup F,B) + P(E \cap F,B) \le P(E,B) + P(F,B)$;
    \item[iv)] \emph{Complementation}. It holds $P(E ,B) = P(\X \setminus E,B)$;
\end{itemize}
\end{proposition}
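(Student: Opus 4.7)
My plan is to derive (i), (iii), and (iv) directly from the definition of $|D\chi_E|$ as an infimum over Lipschitz approximations on open sets, extending to Borel sets via the outer-regular construction, while (ii) is the promised diagonal argument.

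For (i), observe that for an open set $A$, the value $|D\chi_E|(A)$ depends on $\chi_E$ only through its equivalence class in $L^1_{\rm loc}(\mm\restr{A})$, since the defining approximations $f_n \in {\rm Lip}_{\rm loc}(A)$ must converge in that topology. Hence whenever $A$ is open with $\mm((E \Delta F) \cap A) = 0$, the admissible families for $\chi_E$ and $\chi_F$ coincide, so $|D\chi_E|(A) = |D\chi_F|(A)$; the Borel statement then follows from the outer-regular extension.

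For (iii), take near-optimal sequences $f_n \to \chi_E$ and $g_n \to \chi_F$ in $L^1_{\rm loc}(\mm\restr{A})$ for an open set $A$, truncated to $[0,1]$ (which only decreases slopes and preserves convergence since $\chi_E,\chi_F \in [0,1]$), and set $u_n := \max(f_n, g_n)$ and $w_n := \min(f_n, g_n)$, so that $u_n \to \chi_{E \cup F}$ and $w_n \to \chi_{E \cap F}$ in $L^1_{\rm loc}(\mm\restr{A})$. At every point where one of $f_n, g_n$ strictly dominates the other in some neighborhood, $\max$ and $\min$ locally coincide with the two originals, giving the pointwise equality $\lip u_n + \lip w_n = \lip f_n + \lip g_n$ and, after integration, $\int_A (\lip u_n + \lip w_n)\,\d\mm \le \int_A (\lip f_n + \lip g_n)\,\d\mm$. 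Extracting a common subsequence along which both $\int_A \lip u_n\,\d\mm$ and $\int_A \lip w_n\,\d\mm$ converge (necessary because $\limi$ is only superadditive) then produces submodularity on open sets, and the Borel case follows by outer regularity. Property (iv) is immediate from $\chi_{\X \setminus E} = 1 - \chi_E$ together with $\lip(1 - f) = \lip f$, placing the admissible families in bijection.

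For (ii), fix an open $U$ and a sequence $\chi_{E_n} \to \chi_E$ in $L^1_{\rm loc}(\mm)$. For each $n$, pick $f_n^{(k)} \in {\rm Lip}_{\rm loc}(U)$ with $f_n^{(k)} \to \chi_{E_n}$ in $L^1_{\rm loc}(\mm\restr{U})$ as $k \to \infty$ and $\lim_k \int_U \lip f_n^{(k)}\,\d\mm \le P(E_n, U) + 1/n$. A diagonal extraction produces indices $k_n$ with $f_n^{(k_n)} \to \chi_E$ in $L^1_{\rm loc}(\mm\restr{U})$ and $\int_U \lip f_n^{(k_n)}\,\d\mm \le P(E_n, U) + 2/n$, whence $P(E, U) \le \limi_n P(E_n, U)$ by the definition of the total variation on an open set. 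The main hurdle I anticipate is in (iii), where both the subsequence extraction (to circumvent the superadditivity of $\limi$) and the pointwise slope analysis on the possibly non-negligible coincidence set $\{f_n = g_n\}$ must be handled carefully before taking the limit.
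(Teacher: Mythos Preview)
Your outline matches the paper's (which only says (i), (iii), (iv) follow from the definition and (ii) is a diagonal argument, citing \cite{Mir03}), but two steps have genuine gaps.

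For (i), the passage from open to Borel via outer regularity fails: open $A\supset B$ need not satisfy $\mm((E\Delta F)\cap A)=0$, so you cannot compare $|D\chi_E|(A)$ with $|D\chi_F|(A)$ along the approximating open sets. In fact the Borel statement is false as written: in $(\mathbb{R},|\cdot|,\mathcal{L}^1)$ with $E=(0,1)$, $F=(0,2)$, $B=\{1\}$ one has $\mm((E\Delta F)\cap B)=\mathcal{L}^1(\{1\})=0$, yet $P(E,\{1\})=1\ne 0=P(F,\{1\})$. Your argument does establish locality for open $B$, which is all the paper actually uses (cf.\ Remark~\ref{rem:from_borel_to_open}).

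For (iii), your worry about the coincidence set is well founded: the pointwise bound $\lip u_n+\lip w_n\le \lip f_n+\lip g_n$ can fail there. On $\mathbb{R}$ take $f(y)=y$, $g\equiv 0$; then $\lip f(0)+\lip g(0)=1$ while $\lip(y^+)(0)=\lip(\min(y,0))(0)=1$, so the left side equals $2$. The remedy is the two-point contraction
\[
|\max(a,b)-\max(c,d)|+|\min(a,b)-\min(c,d)|\le |a-c|+|b-d|,
\]
which gives $|u_n(y)-u_n(z)|+|w_n(y)-w_n(z)|\le |f_n(y)-f_n(z)|+|g_n(y)-g_n(z)|$ for \emph{all} $y,z$ and hence the desired pointwise inequality for the asymptotic Lipschitz constant $\Lip_a h(x):=\lim_{r\to 0}\Lip(h|_{B(x,r)})$. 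Since relaxing $\int\Lip_a$ produces the same total variation as relaxing $\int\lip$ (see \cite{Ambrosio-DiMarino14}), submodularity on open sets follows; the Borel case then comes from outer regularity applied simultaneously to the four finite perimeter measures.
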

\begin{remark}\label{rmk:compactness}
We remark that without a Poincar\'e inequality, we do not always have compactness for sets of finite perimeter in the sense that any sequence of sets of finite perimeter $\{ E_n\}_n$ with $\sup_n P(E_n)< \infty$ would have a subsequence converging in $L^1_{\textrm{loc}}(\mm)$ to a limit set $E_\infty$ with finite perimeter. 
In PI spaces this holds, see \cite[Thm.\ 3.7]{Mir03}.
\end{remark}

We recall the coarea formula for BV functions in the setting of abstract metric measure spaces, as proved in \cite{Mir03} for PI spaces. As remarked for instance in \cite{Ambrosio-DiMarino14}, the same formula holds in the setting of abstract metric measure spaces. 
\begin{proposition}[Coarea formula]
Let $(\X,\sfd,\mm)$ be a metric measure space and consider $f \in L^1_{\textrm{loc}}(\mm)$. Then for every Borel set $E$, the map $t \mapsto P(\{ f> t \},E)$ is Borel and 
\[ |D f|(E) = \int_{-\infty}^{+\infty} P(\{ f > t \},E)\,\d t.\]
In particular, if $f \in \ca{BV}(\X)$, then $\{ f>t \}$ has finite perimeter for $\mathcal{L}^1$-a.e.\ $t \in \mathbb{R}$; on the other side, if $\int_{-\infty}^{+\infty} P(\{ f > t \},E)\,\d t<\infty$, then $f \in \ca{BV}(\X)$.
\end{proposition}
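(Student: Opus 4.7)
The plan is to prove the identity on open sets $A \subset \X$ and extend it to Borel $E$ by the outer regularity of both sides: the left side by the definition of $|Df|$, and the right side by monotone convergence applied to $\int P(\{f>t\}, A_k)\,\d t$ for $A_k \downarrow E$. The Borel measurability of $t \mapsto P(\{f > t\}, E)$ then reduces to the open case, where monotonicity of $t \mapsto \chi_{\{f > t\}}$ combined with its right-continuity in $L^1_\loc(A)$ and the lower semicontinuity of perimeter (Proposition \ref{prop:per_properties}(ii)) gives right lower semicontinuity of $t \mapsto P(\{f > t\}, A)$. The central technical ingredient is the \emph{Lipschitz coarea inequality}: for $g \in \Lip_\loc(A)$,
\[
\int_{-\infty}^{+\infty} P(\{g > t\}, A)\,\d t \,\le\, \int_A \lip g\,\d\mm.
\]
To prove this, set $g_{t,h} := \min(1, \max(0, (g - t)/h)) \in \Lip_\loc(A)$; it converges to $\chi_{\{g > t\}}$ in $L^1_\loc(A)$ as $h \downarrow 0$ by dominated convergence, and satisfies $\lip g_{t,h} \le h^{-1}\lip g \cdot \chi_{\{t \le g \le t + h\}}$. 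The inequality then follows from the definition of perimeter, Fatou's lemma in $t$, and Fubini's theorem, since $\int \chi_{\{t \le g(x) \le t + h\}}\,\d t = h$.

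For the direction $|Df|(A) \ge \int P(\{f > t\}, A)\,\d t$, pick $f_n \in \Lip_\loc(A)$ with $f_n \to f$ in $L^1_\loc(A)$ and $\int_A \lip f_n\,\d\mm \to |Df|(A)$. After passing to a subsequence converging $\mm$-a.e., note that $\{t : \mm(\{f = t\} \cap A) > 0\}$ is at most countable (covering $A$ by bounded sets of finite measure), so for $\mathcal{L}^1$-a.e.\ $t$ one has $\chi_{\{f_n > t\}} \to \chi_{\{f > t\}}$ pointwise a.e.\ and hence in $L^1_\loc(A)$ by dominated convergence. Applying lower semicontinuity of perimeter, Fatou's lemma in $t$ and the Lipschitz coarea inequality yields
\[
\int P(\{f > t\}, A)\,\d t \,\le\, \liminf_n \int P(\{f_n > t\}, A)\,\d t \,\le\, \liminf_n \int_A \lip f_n\,\d\mm = |Df|(A).
\]

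The reverse inequality is the step I expect to be the main obstacle. Assume the right side is finite; truncating $f$ between $-M$ and $M$ and splitting into positive and negative parts, it suffices to treat $0 \le f \le M$. For $h > 0$ with $M/h \in \mathbb{N}$ and each $i = 0, \dots, M/h - 1$, a mean-value argument yields $t_{i,h} \in [ih, (i+1)h)$ with $P(\{f > t_{i,h}\}, A) \le h^{-1}\int_{ih}^{(i+1)h} P(\{f > t\}, A)\,\d t$, so that $h\sum_i P(\{f > t_{i,h}\}, A) \le \int_0^M P(\{f > t\}, A)\,\d t$. By the definition of perimeter, for each $i$ choose $g_{i,h,n} \in \Lip_\loc(A)$ with $g_{i,h,n} \to \chi_{\{f > t_{i,h}\}}$ in $L^1_\loc(A)$ and $\int_A \lip g_{i,h,n}\,\d\mm \to P(\{f > t_{i,h}\}, A)$ as $n \to \infty$. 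The finite sum $f_{h,n} := h \sum_i g_{i,h,n} \in \Lip_\loc(A)$ converges in $L^1_\loc(A)$ as $n \to \infty$ to the step function $f^h := h \sum_i \chi_{\{f > t_{i,h}\}}$, which satisfies $\|f^h - f\|_{L^\infty} \le h$. Subadditivity of the slope bounds $\int_A \lip f_{h,n}\,\d\mm \le h\sum_i \int_A \lip g_{i,h,n}\,\d\mm$, and a diagonal argument produces Lipschitz approximants of $f$ in $L^1_\loc(A)$ whose slope integrals are bounded by $\int_0^M P(\{f > t\}, A)\,\d t + \epsilon$, giving $|Df|(A) \le \int P(\{f > t\}, A)\,\d t$ by the definition of total variation. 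The two ``in particular'' consequences follow immediately from the formula applied with $E = \X$.
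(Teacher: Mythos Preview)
The paper does not supply its own proof of this proposition; it is stated as a known result, citing Miranda \cite{Mir03} for the PI-space case and noting (via \cite{Ambrosio-DiMarino14}) that the same argument goes through for general metric measure spaces. Your sketch is essentially the classical proof (as in \cite{Mir03}): the Lipschitz coarea inequality, the $\ge$ direction via lower semicontinuity of perimeter plus Fatou, and the $\le$ direction via a step-function approximation of $f$ built from finitely many superlevel sets. The core steps are correct.

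Two small points are worth tightening. First, right lower semicontinuity of $t \mapsto P(\{f>t\},A)$ by itself does not obviously give Borel measurability. A cleaner argument is to observe that $t \mapsto \chi_{\{f>t\}}$, viewed as a map from $\mathbb{R}$ into $L^1_{\loc}(A)$ (with a metric inducing local $L^1$ convergence), is monotone and hence has at most countably many discontinuities; it is therefore Borel, and composing with the lower semicontinuous (hence Borel) functional $u \mapsto |Du|(A)$ gives the claim. Second, for the extension from open to Borel $E$ you are implicitly using that both sides are finite Borel measures in $E$: once $|Df|(\X)<\infty$, the right-hand side $E \mapsto \int P(\{f>t\},E)\,\d t$ is a finite Borel measure (Fubini, since $P(\{f>t\},\cdot)$ is a Borel measure for a.e.\ $t$), and two finite Borel measures on a metric space agreeing on open sets agree on all Borel sets by outer regularity. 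When $|Df|(\X)=\infty$ the identity on $E=\X$ still holds (both sides infinite by the open-set case), which is all that is needed for the two ``in particular'' consequences.
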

We define the notion of sets of finite perimeter on a Borel subset $B \subset \X$.
\begin{definition}[Sets of finite perimeter on a Borel subset B]
    Let $(\X,\sfd,\mm)$ be a metric measure space and $B\subset \X$. We say that $E \in \mathscr{B}(B)$ has finite perimeter on $B$ if $P_{B}(E) <\infty$, where $P_{B}(E) := |D \chi_E|_B(B)$ (where the total variation is computed in the metric measure space $(\X,\sfd,\mm\restr{B})$).
    Moreover, we define for every Borel set $F$, $P_B(E,F):= |D \chi_E|_B(B \cap F)$.
\end{definition}
\begin{remark}
\label{rem:from_borel_to_open}
Again, as for the case of $BV$ functions, we notice that, if $B$ is an open set, $E\in \mathscr{B}(\X)$ has finite perimeter in $B$ if and only if $P(\tilde{E},B)<\infty$, where $\tilde{E}$ is any Borel set such that $(\tilde{E} \cap \Omega) \Delta E =\emptyset$. In this case, $P(\tilde{E},B) = P_B(E)$.
\end{remark}
\begin{remark}
\label{rem:coarea_formula_onBorel}
We have for the definitions of $\ca{BV}(B)$ and sets of finite perimeter on $B$ that the coarea formula in the mms $(\X,\sfd,\mm\restr{B})$ reads as follows.
Consider $f \in L^1_{\textrm{loc}}(\mm\restr{B})$.
Then, for every Borel set $E$, the map $t \mapsto P_B(\{ f> t \},E)$ is Borel and
\[ |D f|_B(E) = \int_{-\infty}^{+\infty} P_B(\{ f > t \},E)\,\d t.\]
In particular, if $f \in \ca{BV}(B)$, then $\{ f > t\}$ has finite perimeter on $B$ for $\mathcal{L}^1$-a.e.\ $t$; on the other side, if $\int_{-\infty}^{+\infty} P_B(\{ f > t \},\X)\,\d t<\infty$, then $f \in \ca{BV}(B)$.
\end{remark}
\subsection{Sobolev 
functions in metric measure spaces}
We recall that there are several possible definitions of $W^{1,1}$ in arbitrary metric measure spaces, 
see for instance \cite{Ambrosio-DiMarino14}. Let us consider an open set $\Omega \subset \X$.
The simplest definition after having defined $BV(\Omega)$ is the space $W_w^{1,1}(\Omega) \subset BV(\Omega)$ consisting of all $u \in BV(\Omega)$ such that $|D u|\ll \mm\restr{\Omega}$, endowed with the norm as subset of the BV space, namely:
\begin{equation}
    \| u\|_{W^{1,1}_w(\Omega)}:=\| u\|_{L^1(\Omega)}+\left\|\frac{\d |D u|}{\d \mm} \right\|_{L^1(\Omega)}, \qquad \text{for }u\in W^{1,1}_w(\Omega).
\end{equation}
In this paper we will not consider the Newtonian definition of Sobolev space \cite{KMM98,S00}. One reason for this is that it is not the most convenient one to use in our proofs. 
Instead, the main definition of Sobolev space for the exponent $p = 1$ that we use in this paper is the following.
\begin{definition}[The space $W^{1,1}(\X)$]\label{def:W11}
Given $f \in L^1(\mm)$, we say that $f \in W^{1,1}(\X)$ if there exists $G \in L^1(\mm)$ such that, for every $\infty$-test plan $\ppi$
\[ |f(\gamma_1) - f(\gamma_0) | \le \int_0^1 G(\gamma_t)|\dot{\gamma_t}|\,\d t\qquad\text{for }\ppi\text{-a.e. }\gamma.\]
In this case, we call $G$ a $1$-weak upper gradient of $f$.
\end{definition}
We can localize in time via the following standard argument (see \cite[Prop. 5.7]{AGS14}).
Given an $\infty$-test plan $\ppi$ and the fact that the probability measure $\ppi_{q_1,q_2}:={{\rm restr}_{q_1,q_2}}_*\ppi$ for $q_1,q_2 \in \mathbb{Q} \cap [0,1]$ is an $\infty$-test plan, writing the definition of $W^{1,1}(\X)$ checked on $\ppi_{q_1,q_2}$ and using the fact that, for $\ppi$-a.e.\ $\gamma$, $f \circ \gamma \in W^{1,1}(0,1)$, we get the following.
\begin{proposition}
\label{prop:equivalence_localized_W11}
Given $f \in L^1(\mm)$, the following are equivalent:
\begin{itemize}
    \item[i)] $G$ is a $1$-weak upper gradient of $f$
    \item[ii)] for every $\infty$-test plan $\ppi$, for $\ppi$-a.e.\ $\gamma$, $f \circ \gamma \in W^{1,1}(0,1)$ and
    \[ |(f \circ \gamma)'_t| \le G(\gamma_t)|\dot{\gamma}_t|\qquad \text{for a.e. }t.\]
\end{itemize}
\end{proposition}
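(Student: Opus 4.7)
The direction (ii) $\Rightarrow$ (i) is essentially free: for a $\ppi$-a.e.\ curve $\gamma$ the function $f\circ\gamma\in W^{1,1}(0,1)$ satisfies $f(\gamma_1)-f(\gamma_0)=\int_0^1 (f\circ\gamma)'_t\,\d t$, and combining the triangle inequality with the pointwise bound in (ii) recovers the defining inequality of Definition \ref{def:W11}. The real content is (i) $\Rightarrow$ (ii), and the plan is the standard localization via restricted test plans already hinted at in the paragraph preceding the proposition.

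For every pair of rationals $0\le q_1<q_2\le 1$, the pushforward $\ppi_{q_1,q_2}:=({\rm restr}_{q_1,q_2})_*\ppi$ is again an $\infty$-test plan: its Lipschitz constant is $|q_2-q_1|\,{\rm Lip}(\ppi)$, and its compression is controlled by ${\sf C}(\ppi)$ since $\e_r\circ{\rm restr}_{q_1,q_2}=\e_{(1-r)q_1+rq_2}$. Applying (i) to $\ppi_{q_1,q_2}$ and performing the linear change of variable $s=(1-r)q_1+rq_2$ yields, for $\ppi$-a.e.\ $\gamma$,
\begin{equation}
|f(\gamma_{q_2})-f(\gamma_{q_1})|\le \int_{q_1}^{q_2} G(\gamma_s)|\dot\gamma_s|\,\d s.
\end{equation}
Fixing Borel representatives of $f$ and $G$ and intersecting over the countable family $\mathbb{Q}^2\cap[0,1]^2$, we obtain a single $\ppi$-null exceptional set outside of which this inequality holds for \emph{all} rational pairs simultaneously; we also keep track of the compression bound to ensure $f(\gamma_q)$ is well-defined.

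A Fubini argument using ${\rm Lip}(\ppi)$ and ${\sf C}(\ppi)$ gives
\[
\int\!\!\int_0^1 G(\gamma_s)|\dot\gamma_s|\,\d s\,\d\ppi(\gamma)\le {\rm Lip}(\ppi)\,{\sf C}(\ppi)\,\|G\|_{L^1(\mm)}<\infty,
\]
so for $\ppi$-a.e.\ $\gamma$ the function $\Phi(t):=\int_0^t G(\gamma_s)|\dot\gamma_s|\,\d s$ is absolutely continuous on $[0,1]$; an analogous Fubini ensures $t\mapsto f(\gamma_t)\in L^1(0,1)$. The rational estimate above says $q\mapsto f(\gamma_q)$ on $\mathbb{Q}\cap[0,1]$ has modulus of continuity dominated by $\Phi$, so it extends uniquely to an absolutely continuous function $h\colon[0,1]\to\mathbb{R}$. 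A Lebesgue-point argument comparing $h$ to the $L^1$-function $t\mapsto f(\gamma_t)$ (which agree on a dense set) gives $h(t)=f(\gamma_t)$ for a.e.\ $t$, identifying $f\circ\gamma$ with $h$ in $W^{1,1}(0,1)$. The fundamental theorem of calculus applied to $|h(q_2)-h(q_1)|\le \Phi(q_2)-\Phi(q_1)$ then yields $|(f\circ\gamma)'_t|=|h'(t)|\le G(\gamma_t)|\dot\gamma_t|$ for a.e.\ $t$.

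The main obstacle is not any single analytic step but the bookkeeping: we must fix Borel representatives of $f$ and $G$, discard one $\ppi$-null set per rational $q$ (where $\gamma_q$ might land in an $\mm$-null bad set), plus the Fubini exceptional set and the exceptional set coming from the estimate, and then verify that the common good set has full $\ppi$-measure. Once this is packaged carefully, the passage from the rational-indexed estimate to absolute continuity of $f\circ\gamma$ with the pointwise derivative bound is the classical characterization of $W^{1,1}(0,1)$.
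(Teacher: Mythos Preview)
Your proposal is correct and follows exactly the route the paper indicates: the paper gives no proof beyond the paragraph preceding the statement, which points to \cite[Prop.\ 5.7]{AGS14} and sketches the localization via the restricted plans $\ppi_{q_1,q_2}$ for rational $q_1,q_2$, precisely what you carry out.

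One small point worth tightening: the step ``$h$ and $t\mapsto f(\gamma_t)$ agree on a dense set, hence a.e.'' is not valid as stated, since an $L^1$ function is not determined by its values on a Lebesgue-null dense set. The standard fix (and what the reference to \cite{AGS14} provides) is to use Fubini once more: for each fixed $t\in[0,1]$ the plan $({\rm restr}_{0,t})_*\ppi$ is still an $\infty$-test plan, so $|f(\gamma_t)-f(\gamma_0)|\le\Phi(t)$ for $\ppi$-a.e.\ $\gamma$, and integrating in $t$ gives for $\ppi$-a.e.\ $\gamma$ that this holds for a.e.\ $t$; sending rationals $q\to t$ then forces $f(\gamma_t)=h(t)$ for those $t$. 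With this adjustment your argument is complete.
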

As a consequence of the last proposition, we have that, defining
\[A(f):=\{ G \in L^1(\mm): G\text{ is a }1\text{-weak upper gradient of }f \},\]
$(A(f),\le)$ is a convex, closed (in $L^1(\mm)$) lattice. Hence there exists a $1$-weak upper gradient, which is minimal $\mm$-a.e., which we call the minimal $1$-weak upper gradient and denote by $|Df|_{1,\X}$.
Similarly, given an open set $\Omega \subset \X$, it is natural to define $W^{1,1}(\Omega)$ by considering only test plans on $\Omega$. We denote the 1-minimal weak upper gradient for this case $|Du|_{1,\Omega}$.\\
It is immediate to check that $W^{1,1}(\X) \subset W^{1,1}(\Omega)$ (where the inclusion is given by the natural restriction) and 
\begin{equation}
\label{eq:1mwug_on_Omega}
    |D u|_{1,\Omega} \le |D u|_{1,\X}\quad \mm\text{-a.e. on }\Omega \quad \text{for every }u \in W^{1,1}(\X).
\end{equation}
As customary, we eliminate the subscripts $1$ and $\Omega$ when there is no danger for confusion. 

Notice that the following inclusion holds
\begin{equation}
 W^{1,1}(\X) \subset W_w^{1,1}(\X),
\end{equation}
see \cite[Sec.\ 8]{Ambrosio-DiMarino14}.
Notice also that, as a consequence of the equivalence of definitions of the space $BV(\X)$ in \cite[Thm. 1.1]{Ambrosio-DiMarino14}, the definition by relaxation is equivalent to another one using the notion of $\infty$-test plans (see the definition of $w-BV(\X,\sfd,\mm)$ therein).
\begin{theorem}[Equivalent definition of $BV(\X)$ {\cite[Thm. 1.1]{Ambrosio-DiMarino14}}]\label{thm:BVequiv}
Let $f \in L^1(\mm)$. Then $f \in BV(\X)$ if and only if for $1$-a.e.\ curve we have that $f \circ \gamma \in BV(0,1)$ and $|f(\gamma_1)-f(\gamma_0)| \le |D(f \circ \gamma)|(0,1)$ and for every $\infty$-test plan $\ppi$
\begin{equation}
    \label{eq:curvewise_BV}
    \int \gamma_*|D(f \circ \gamma)|(B)\,\d \ppi(\gamma) \le {\sf C}(\ppi)\,\| {\rm Lip}(\gamma) \|_{L^\infty(\ppi)}\,\mu(B).
\end{equation}
    for every $B \in \mathscr{B}(\X)$.
In this case, $|Df|$ is the minimal measure $\mu$ for which \eqref{eq:curvewise_BV} is satisfied.
\end{theorem}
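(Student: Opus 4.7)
The plan is to prove the two implications separately. The forward implication relies on the chain rule for Lipschitz functions along absolutely continuous curves, applied to the approximating sequence used to define $|Df|$; the converse requires constructing Lipschitz approximations of $f$ whose slopes are controlled by the candidate measure $\mu$.

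For the direction $f \in BV(\X) \Rightarrow$ curvewise BV, I would fix an approximating sequence $f_n \in \Lip_\loc(\X)$ with $f_n \to f$ in $L^1_\loc(\mm)$ and $\int \lip f_n\,\d\mm \to |Df|(\X)$. For any $\gamma \in AC^\infty([0,1],\X)$, Lipschitzianity yields the pointwise bound $|(f_n \circ \gamma)'_t| \le \lip f_n(\gamma_t)\,|\dot\gamma_t|$ for a.e.\ $t$. Integrating the induced measure $\gamma_*(|(f_n \circ \gamma)'_t|\,\d t)$ against an $\infty$-test plan $\ppi$ and applying Fubini together with the compression bound ${\e_t}_*\ppi \le {\sf C}(\ppi)\mm$ gives
\[ \int \gamma_*\bigl(|(f_n\circ\gamma)'_t|\,\d t\bigr)(B)\,\d\ppi(\gamma) \le {\sf C}(\ppi)\,\Lip(\ppi)\,\int_B \lip f_n\,\d\mm \]
for every Borel $B \subset \X$. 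After extracting a subsequence along which $f_n \circ \gamma \to f \circ \gamma$ in $L^1(0,1)$ for $\ppi$-a.e.\ $\gamma$, lower semicontinuity of total variation on intervals combined with Fatou's lemma promotes the bound to $f$: one obtains $f \circ \gamma \in BV(0,1)$, the endpoint estimate $|f(\gamma_1) - f(\gamma_0)| \le |D(f \circ \gamma)|(0,1)$, and finally \eqref{eq:curvewise_BV} with $\mu = |Df|$.

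For the converse, the strategy is to produce, for every $\varepsilon > 0$, a locally Lipschitz approximation $f_\varepsilon$ with $f_\varepsilon \to f$ in $L^1_\loc(\mm)$ and $\limsup_{\varepsilon \downarrow 0}\int \lip f_\varepsilon\,\d\mm \le \mu(\X)$; by the very definition of $|Df|$ this forces $|Df|(\X) \le \mu(\X)$, and applying the same argument with a Borel $B$ (via localization to open neighborhoods and outer regularity) upgrades this to $|Df| \le \mu$ as measures. A natural candidate for the regularization is a Hopf--Lax/Moreau--Yosida infimal convolution of the form
\[ f_\varepsilon(x) := \inf_{y \in \X} \Bigl( f(y) + \tfrac{1}{\varepsilon}\,\sfd(x,y)^2 \Bigr), \]
which is automatically locally Lipschitz. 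The slope of $f_\varepsilon$ at a point $x$ is then estimated by testing \eqref{eq:curvewise_BV} against $\infty$-test plans concentrated on short curves joining $x$ to (measurably selected) near-optimal competitors $y$; integrating the pushforward data $\gamma_* |D(f \circ \gamma)|$ over such plans yields, by Fubini and \eqref{eq:curvewise_BV}, the required pointwise control of $\lip f_\varepsilon$ by an average of $\mu$. Combined with the forward direction, this confirms that $|Df|$ is the minimal admissible $\mu$.

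The hard part is the converse. Without a Poincar\'e inequality, a doubling property, or a heat-type semigroup, one cannot smooth via partitions of unity or semigroup regularization; the only tool available is the curvewise hypothesis itself. The technical core is therefore to show that the family of $\infty$-test plans is rich enough to constrain the slopes of $f_\varepsilon$: this requires a Borel-measurable selection of near-optimizers in the definition of $f_\varepsilon$, construction of interpolating test plans whose compression constants are uniformly bounded in terms of $\mm$, and a careful Borel reconstruction of $\lip f_\varepsilon$ from the aggregated pushforwards. These are precisely the delicate points carried out in \cite{Ambrosio-DiMarino14}, and they are what makes the equivalence substantially harder than in the Euclidean or PI setting.
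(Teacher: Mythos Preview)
The paper does not prove this theorem: it is quoted verbatim as \cite[Thm.\ 1.1]{Ambrosio-DiMarino14} and used as a black box (in particular in the proof of Lemma~\ref{lemma:fromlocal_to_global_W11}). There is therefore no ``paper's own proof'' to compare against; your proposal is an outline of the argument in the cited reference rather than of anything appearing here.

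As an outline of the Ambrosio--DiMarino proof, your forward direction is correct and essentially complete. For the converse, the Hopf--Lax regularization is indeed the right tool, but your description of how to bound $\lip f_\varepsilon$ contains a genuine gap you only half-acknowledge. Testing \eqref{eq:curvewise_BV} against ``test plans concentrated on short curves joining $x$ to near-optimal competitors $y$'' does not work directly: a plan that deterministically sends each $x$ to its near-optimizer along some curve has no a priori control on the compression constant ${\sf C}(\ppi)$, since the map $x \mapsto y(x)$ can pile up mass. The actual mechanism in \cite{Ambrosio-DiMarino14} is different: one first establishes the Hopf--Lax subsolution property $\frac{\d^+}{\d t}Q_t f + \frac{1}{2}(\lip Q_t f)^2 \le 0$ pointwise, and then integrates this along an \emph{arbitrary} $\infty$-test plan $\ppi$ (not a specially constructed one) to compare $\int \lip Q_t f\,\d\mm$ with the curvewise variation data; the compression bound enters through the generic plan, not through a selection of optimizers. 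Your sketch inverts the logic and would require producing plans with controlled compression out of the optimizer selection, which is not how the argument runs.
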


%
\subsection{Extension properties}
\label{sec:extension_properties}
We introduce some extension properties of a Borel set $U\subset \X$. Often the set $U$ will be assumed to be open.
We define the $(\mathscr{A},\|\cdot\|_{\mathscr{S}})$-extension set, where $\mathscr{A}(U) \subset L^0(\mm\restr{U})$ is a vector space, when endowed with a seminorm $\| \cdot \|_{\mathscr{S}(U)}$. For what concerns this manuscript, we will specialize the above definition in the case where $(\mathscr{A}(U),\|\cdot\|_{\mathscr{S}(U)})$ is one of the following: $(W^{1,1}(U),\|\cdot\|_{W^{1,1}(U)})$, $(W^{1,1}_w(U),\|\cdot\|_{W^{1,1}_w(U)})$, $(BV(U),\|\cdot\|_{BV(U)})$, $(BV(U)\cap L^\infty(U),\|\cdot\|_{BV(U)})$, $(\ca{BV}(U)\cap L^\infty(U),\|\cdot\|_{\ca{BV}(U)})$, or $(\ca{BV}(U),\|\cdot\|_{\ca{BV}(U)})$.
\begin{definition}
\label{def:A_extension_domains}
Let $\Omega \subset \X$ be a Borel set, $\mathscr{A}(\Omega) \subset L^0(\mm\restr{\Omega})$, and $\| \cdot \|_{\mathscr{S}(\Omega)}$ a seminorm on $\mathscr{A}(\Omega)$. Then we say that $\Omega$ is an $(\mathscr{A},\|\cdot\|_{\mathscr{S}})$-extension set if there exists $C >0$ and $E \colon \mathscr{A}(\Omega) \to \mathscr{A}(\X)$ such that 
\begin{itemize}
    \item[i)] $\|E u\|_{\mathscr{S}(\X)} \le C \| u\|_{\mathscr{S}(\Omega)}$ for every $u \in \mathscr{A}(\Omega)$;
    \item[ii)] $E u\restr{\Omega} = u$ for every $u \in \mathscr{A}(\Omega)$.
\end{itemize}
We will write that $\Omega$ is a $\mathscr{A}$-extension set instead of $(\mathscr{A},\|\cdot\|_{\mathscr{S}})$-extension set whenever $\|\cdot\|_{\mathscr{S}(\Omega)}$ is a natural seminorm on $\mathscr{A}(\Omega)$.
\end{definition}
If $\Omega$ is a $(\mathscr{A},\|\cdot\|_{\mathscr{S}})$-extension set with $E$ being the extension operator, we define:
\begin{equation}
    \| E \|_{\mathscr{S}}:= \sup_{u \in \mathscr{A}(\Omega)} \frac{\|E u\|_{\mathscr{S}(\X)}}{\| u \|_{\mathscr{S}(\Omega)}} <\infty
\end{equation}
with the convention that $0/0=0$ and $t/0 = \infty$ for $t>0$. Notice that this may happen since $\|\cdot\|_{\mathscr{S}(\Omega)}$ is in this generality only a seminorm.
%
%
%
%
\begin{definition}[Strong BV extension sets]
\label{def:strong_bv_ext_domain}
Let $\Omega \subset \X$ be open. Then we say that $\Omega$ is a strong $BV$-extension set (s-$BV$-extension set in short) if it is a $(BV,\| \cdot\|_{BV})$-extension set with extension operator $E$ and
\begin{equation}
\label{eq:no_charging_bdry_definition}
    |D E u|(\partial \Omega) = 0.
\end{equation}
\end{definition}
In all the definitions above, when $\Omega$ is also connected we say that $\Omega$ is a $(\mathscr{A},\|\cdot\|_{\mathscr{S}})$-extension domain and for the last definition a s-$BV$-extension domain in place of extension set. Analogous definitions of extendability can be given for sets of finite perimeter. For completeness and for fixing the terminology, we write these definitions below explicitly.
\begin{definition}[Extension property for sets of finite perimeter]
\label{def:ext_prop_set}
Let $\Omega \subset \X$ be a Borel set. Then we say that $\Omega$ has the extension property for sets of finite perimeter if there exists $C_{\textrm{Per}}>0$ such that for every $E \subset \Omega$ of finite perimeter on $\Omega$ there exists $\tilde{E}$ such that the following hold
\begin{itemize}
    \item[i)] $P(\tilde{E},\X) \le C_{\textrm{Per}} P_\Omega(E)$;
    \item[ii)] $\mm(E \Delta (\tilde{E} \cap \Omega)) = 0$; 
\end{itemize}
\end{definition}
\begin{definition}[Extension property for sets of finite perimeter for the full norm]
\label{def:ext_prop_set_fullnorm}
Let $\Omega \subset \X$ be a Borel set. Then we say that $\Omega$ has the extension property for sets of finite perimeter for the full norm if there exists $C_{\textrm{Per}}'>0$ such that for every $E \subset \Omega$ of finite perimeter in $\Omega$ there exists $\tilde{E}\subset \X$ such that the following hold
\begin{itemize}
    \item[i)] $\mm(\tilde{E}) + P(\tilde{E},\X) \le C_{\textrm{Per}}'( \mm(E)+P_{\Omega}(E))$;
    \item[ii)] $\mm(E \Delta (\tilde{E} \cap \Omega)) = 0$; 
\end{itemize}
\end{definition}
\begin{definition}[Strong extension property for sets of finite perimeter]
\label{def:strong_ext_prop}
Let $\Omega \subset \X$ be open. Then we say that $\Omega$ has the strong extension property for sets of finite perimeter if there exists $C_{\textrm{Per}}'>0$ such that for every $E \subset \Omega$ of finite perimeter in $\Omega$ there exists $\tilde{E}$ such that i) and ii) in Definition \ref{def:ext_prop_set_fullnorm} hold and
\begin{itemize}
    \item[iii)] $P(\tilde{E}, \partial \Omega) = 0$. 
\end{itemize}
\end{definition}
\section{Relations between extension properties for BV functions and for sets of finite perimeter} \label{sec:bv_vs_per}

In this section we will prove Theorem \ref{thm:BV0} and Theorem \ref{thm:BV} connecting extendability of $BV$-functions and sets of finite perimeter. Unlike in the Euclidean setting where a bounded domain is $BV$-extension set if and only if it is a $\ca{BV}$-extension set, \cite[Lemma\ 2.1]{KMS2010}, in general metric measure spaces that does not support a Poincar\'e inequality this need not be true.
As already mentioned in the introduction, a simple way to see this is to consider the union of two disjoint balls in Euclidean space. This set is a $BV$-extension set: we can consider the extensions from the balls separately and use partition of unity to make a global extension operator. However, the set is not a $\ca{BV}$-extension set: consider a function that is zero in one ball and one in the other. Then the extension should have zero gradient almost everywhere, which is impossible by the Poincar\'e  inequality. Although the union of two balls is not a domain, by considering a weight on the Euclidean space so that the capacity between the two balls is zero inside some domain $\Omega$ containing the two balls and nonzero in the whole space, we obtain a domain $\Omega$ in a metric measure space that is a $BV$-extension set but not a $\ca{BV}$-extension set. Since having a domain instead of an open set does not provide more analytic restrictions in the general setting, below we will not assume $\Omega$ to be a domain. Moreover, we will state in Proposition \ref{prop:weak_equivalence} and Proposition \ref{prop:fullnorm_equivalence} slightly more general versions of Theorem \ref{thm:BV0} and Theorem \ref{thm:BV} where the set $\Omega$ is assumed to be only Borel.

We will also give Example \ref{ex:BVfail} showing that being a $\ca{BV}$-extension set is not the same as having the extension property for sets of finite perimeter. This is due to the lack of compactness in $BV(\X)$ with respect to the total variation. At the end of the section we prove Proposition \ref{prop:strong_equivalence} showing the first equivalence in Theorem \ref{thm:boundaryzero} and Theorem \ref{thm:general}. Let us also mention that one can also prove the intermediate version between Proposition \ref{thm:BV0} and Proposition \ref{prop:strong_equivalence} showing the equivalence between strong $\ca{BV}\cap L^\infty$-extendability and strong extendability of sets of finite perimeter. The simple variation of the proofs is left to the interested reader.
%



We start with the proof of a slightly more general version of Theorem \ref{thm:BV0}.
%
%
\begin{proposition}
\label{prop:weak_equivalence}
 A Borel subset $\Omega \subset \X$ is a $(\ca{BV}\cap L^\infty,\|\cdot\|_{\ca{BV}})$-extension set if and only if it has the extension property for sets of finite perimeter.
 \end{proposition}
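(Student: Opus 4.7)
\smallskip

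\noindent\textbf{Proof plan.} I split into two directions.

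For the forward direction ($(\ca{BV}\cap L^\infty)$-extension implies perimeter extension), given $E\subset\Omega$ of finite perimeter on $\Omega$, I note $\chi_E\in\ca{BV}(\Omega)\cap L^\infty(\Omega)$ with $\|\chi_E\|_{\ca{BV}(\Omega)}=P_\Omega(E)$. Applying the extension operator and then truncating to $[0,1]$ (which preserves the total variation bound by Remark~\ref{rem:composition_lipschitz_functions}) gives $v\in\ca{BV}(\X)\cap L^\infty(\X)$ with $v|_\Omega=\chi_E$, $v\in[0,1]$, and $|Dv|(\X)\leq C|D\chi_E|_\Omega(\Omega)$. The coarea formula then yields
\[
\int_0^1 P(\{v>t\},\X)\,\d t \leq |Dv|(\X)\leq C\, P_\Omega(E),
\]
so some $t_0\in(0,1)$ gives the desired extension $\tilde E:=\{v>t_0\}$, since $\tilde E\cap\Omega=E$ automatically for $t_0\in(0,1)$.

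For the reverse direction, take $u\in\ca{BV}(\Omega)\cap L^\infty(\Omega)$; after splitting $u=u^+-u^-$ I may assume $0\le u\le M$, and I set $E_t:=\{u>t\}$. For each $n$, a Fubini/averaging argument applied to the coarea identity $\int_0^M P_\Omega(E_t)\,\d t=|Du|_\Omega(\Omega)$ shows that, setting $\delta_n:=M/2^n$, I can pick a shift $a_n\in[0,\delta_n)$ (avoiding the null set of $t$ for which $E_t$ lacks finite perimeter) such that $t_k^n:=a_n+(k-1)\delta_n$ satisfies $\delta_n\sum_{k=1}^{2^n}P_\Omega(E_{t_k^n})\le|Du|_\Omega(\Omega)$. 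Applying the perimeter extension property to each $E_{t_k^n}$ I obtain $\tilde E_{t_k^n}\subset\X$ and define
\[
\tilde u_n:=\delta_n\sum_{k=1}^{2^n}\chi_{\tilde E_{t_k^n}}\in L^\infty(\X)\cap\ca{BV}(\X).
\]
Then $\|\tilde u_n\|_\infty\le M$, by subadditivity of total variation $|D\tilde u_n|(\X)\le C_{\textrm{Per}}|Du|_\Omega(\Omega)$, and on $\Omega$ the function $\tilde u_n$ is exactly the shifted dyadic approximation of $u$ from below, which converges uniformly to $u$.

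\smallskip

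\noindent\textbf{Main obstacle.} What remains is to extract a limit of $\tilde u_n$ outside $\Omega$, and here we cannot invoke the PI-type BV-compactness that fails in general (Remark~\ref{rmk:compactness}). My plan is to leverage the uniform $L^\infty$-bound. Since $\X$ is separable and $\mm$ is finite on bounded sets, $\mm$ is $\sigma$-finite; applying the Dunford--Pettis theorem ball-by-ball together with a diagonal argument over a countable bounded exhaustion of $\X$ produces a subsequence $\tilde u_{n_j}\rightharpoonup \tilde u$ weakly in $L^1_{\textrm{loc}}(\X)$. Mazur's lemma then yields convex combinations $v_k\in\mathrm{conv}\{\tilde u_{n_j}:j\ge k\}$ with $v_k\to \tilde u$ strongly in $L^1_{\textrm{loc}}(\X)$. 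Since the total variation is a convex seminorm, $|Dv_k|(\X)\le C_{\textrm{Per}}|Du|_\Omega(\Omega)$, and the lower semicontinuity~\eqref{eq:lsc_totvar_open} gives $|D\tilde u|(\X)\le C_{\textrm{Per}}|Du|_\Omega(\Omega)$. On $\Omega$ the convex combinations still converge uniformly to $u$, so $\tilde u|_\Omega=u$, and $\|\tilde u\|_\infty\le M$ is preserved in the weak limit. This gives the required $(\ca{BV}\cap L^\infty,\|\cdot\|_{\ca{BV}})$-extension.
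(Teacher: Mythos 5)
Your proof is correct and follows essentially the same route as the paper: coarea plus a well-chosen level $t_0$ in the forward direction, and dyadic level-set sums of extended sets, an $L^\infty$-based weak compactness argument, Mazur's lemma and lower semicontinuity \eqref{eq:lsc_totvar_open} in the reverse direction. The only differences are cosmetic: your averaged shift picks the levels without the paper's $(1+1/n)$ loss, and you replace the paper's auxiliary probability measure $\tilde{\mm}$ with weak $L^2(\tilde{\mm})$-compactness by Dunford--Pettis on balls plus a diagonal argument, which serves the same purpose (just apply Mazur in $L^1_{\textrm{loc}}(\X)$ viewed as a Fr\'echet space, or diagonalize once more, so that a single sequence of convex combinations converges on all balls simultaneously).
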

\begin{proof}
We first assume that the Borel set $\Omega \subset \X$ is a $(\ca{BV}\cap L^\infty,\|\cdot\|_{\ca{BV}})$-extension set and show that then $\Omega$ has the extension property for sets of finite perimeter. Consider a Borel set $S$ such that $P_{\Omega}(S)<\infty$. Then $\chi_S \in \ca{BV}(\Omega)\cap L^\infty(\Omega)$ and, by hypothesis, there exists $E \colon \ca{BV}(\Omega)\cap L^\infty(\Omega) \to \ca{BV}(\X) \cap L^\infty(\X)$ as in Definition \ref{def:A_extension_domains}.
Denoting $u:=E \chi_S$, we can assume without loss of generality that $0 \le u \le 1$, by considering $\varphi \circ E$ in place of  $E$, where $\varphi(t) = \max\{ \min\{ t,1\},0\}$; this is still a $\ca{BV}$-extension operator, as a consequence of Remark \ref{rem:composition_lipschitz_functions} (since $\varphi$ is $1$-Lipschitz), and $\| \varphi \circ E\|_{\ca{BV}} \le \| E\|_{\ca{BV}}$. 
By applying the coarea formula, we have 
\[
|D u|(\X)= \int_0^1 P(\{ u > t \} ,\X)\, \d t.
\]
Moreover, there exists $t_0 \in [0,1]$ such that $P(\{ u > t_0 \},\X) \le \int_0^1 P(\{ u>t \},\X)\,\d t$. We choose such $t_0$. Combining the last two facts, we have
\[ P(\{ u > t_0\},\X) \le \| E\|_{\ca{BV}} P_{\Omega}(S).\]
Therefore, the set $\tilde{S}:=\{ u > t_0 \}$ verifies items i) and ii) in Definition \ref{def:ext_prop_set} with the constant $C_{\textrm{Per}} = \|E\|_{\ca{BV}}$.
\medskip

Let us then prove the converse implication and assume that $\Omega$ has the extension property for sets of finite perimeter with a constant $C_{\textrm{Per}}>0$. Consider $u \in \ca{BV}(\Omega)\cap L^\infty(\Omega)$. First, we notice that we may assume without loss of generality that $-1 \le u \le 1$. Indeed, if we build the extension operator $E$ in such a case, we can consider in the general one $\tilde{E} u:=\| u\|_{L^\infty(\Omega)} E(u / \| u\|_{L^\infty(\Omega)})$ and notice that $\|\tilde{E}\|_{\ca{BV}} \le \| E\|_{\ca{BV}}$. We may also assume that $0 \le u \le 1$. Indeed, given $E$ for functions with such a property, for the previous case consider $\tilde{E} u := E u^+- E u^-$ and notice that $\|\tilde{E}\|_{\ca{BV}} \le \| E\|_{\ca{BV}}$.
By applying the coarea formula as in Remark \ref{rem:coarea_formula_onBorel} we know that there exists $N \subset [0,1]$ with $\mathscr{L}^1(N) = 0$ so that
\[
P_{\Omega}(\{ u > t \},\X)<\infty\qquad\text{for all }t \in [0,1] \setminus N.
\]
For every $t \in [0,1]\setminus N$, we extend the set $E_t := \{ u> t \}$ to a Borel set $\tilde{E}_t$ such that $P(\tilde{E}_t,\X) \le C_{\textrm{Per}} P_{\Omega}(E_t)$. Our goal is to find $v_n \in BV(\X)$ such that 
\[
|D v_n|(\X) \le (1+1/n) C_{\textrm{Per}} |D u|_{\Omega}(\Omega)
\]
for every $n$, such that $v_n \to v$ in $L^1_{\textrm{loc}}(\X)$ and $v = u$ on $\Omega$, which gives the conclusion by the application of \eqref{eq:lsc_totvar_open}. We define $u_n = \sum_{j=1}^{2^n} 2^{-n} \chi_{\tilde E_{t_j}}$ for some $t_j \in [(j-1)2^{-n},j2^{-n}]$, that will be chosen later.
We fix $n$ and define $\delta_n := 2^{n}(1+1/n)$. There exists a Borel set $I$ with $\mathscr{L}^1(I)>\delta_n^{-1}$ such that $I \subset [(j-1)2^{-n},j2^{-n}] =: I_{j,n}$ such that, for every $t \in I$, $P_{\Omega}(E_t) \le \delta_n \int_{I_{j,n}} P_{\Omega}(E_r)\,\d r$. For every $j$, choose $t_j$ in the set $I$ defined above.
Therefore, we compute
\begin{equation}
\label{eq:estimate_total_variation_weak_equivalence}
\begin{aligned}
    |D u_m|(\X) &\le \sum_{j=1}^{2^n} 2^{-n} P(\tilde{E}_{t_j},\X) \le C_{\textrm{Per}} \sum_{j=1}^{2^n} 2^{-n} P_{\Omega}(E_{t_j}) \\
    &\le \delta_n C_{\textrm{Per}} 2^{-n} \sum_{j=1}^{2^n} \int_{I_{j,n}} P_{\Omega}(E_r)\,\d r = C_{\textrm{Per}} \delta_n 2^{-n}  |D u|_{\Omega}(\Omega).
\end{aligned}
\end{equation}
We have that $u_n \to u$ in $L^1_{\textrm{loc}}(\Omega)$. Indeed, notice that, given $x \in E_{t_j} \setminus E_{t_{j+1}}$, we have $u_n(x) = (j-1) 2^{-n}$, hence $|u(x)-u_n(x)| \le 2^{-n}$; therefore, for every $x \in \Omega$ and $r >0$ we have $\| u_n-u \|_{L^1(B(x,r) \cap \Omega)} \to 0$ as $n \to \infty$. We consider the measure $\tilde{\mm} \in \mathscr{P}(\X)$ defined as 
\begin{equation*}
\tilde{\mm} := \sum_{i=1}^{\infty}\frac{\mm \restr{B(x_0,i)\setminus B(x_0,i-1)}}{\mm(B(x_0,i)\setminus B(x_0,i-1))2^i},
\end{equation*}
In particular, it holds that $\mm \ll \tilde{\mm} \ll \mm$.
Since $0\le u_n \le 1 \in L^2(X,\tilde{\mm})$ for every $n$, 
we have
\[
\sup_n \| u_n \|_{L^2(\X,\tilde{\mm})}<\infty.
\]
Hence there exists a subsequence $u_{n_k} \rightharpoonup v$ in $L^2(X,\tilde{\mm})$. We apply Mazur's lemma to this subsequence to obtain a sequence $v_m = \sum_{i=m}^{N_m} \lambda_{i}u_{n_{k_i}}$ such that $v_m \to v$ in $L^2(X,\tilde{\mm})$ as $m \to \infty$. We define $E u:= v$. The subadditivity of total variation and \eqref{eq:estimate_total_variation_weak_equivalence} give
\[
|Dv_m|(X)\leq (1+1/m)C_{\textrm{Per}} |Du|_{\Omega}(\Omega). 
\]
Therefore, we just need to show that $v=u$ $\mm$-a.e.\ on $\Omega$. It suffices to prove $v_m \to v$ as $m \to \infty$ in $L^1_{\textrm{loc}}(\X)$.
Indeed, consider $B := B(x_0,R)$ for $R >0$ and $x_0 \in \X$; we estimate
\begin{equation*}
\|v-v_m \|_{L^1(B)} \leq \sqrt{\mm(B)}\,\|v-v_m \|_{L^2(B)} \leq C\|v-v_m\|_{L^2(X,\tilde{\mm})}\to 0.
\end{equation*}
Here the final inequality follows since we can compare $\mm$ and $\tilde{\mm}$ on a bounded set, as it is contained in a finite union of the annuli $B(x_0,i)\setminus B(x_0,i-1)$. This shows that $\Omega$ is a $(\ca{BV} \cap L^\infty, \ca{BV})$-extension set with $\|E\|_{\ca{BV}} \le C_{\textrm{Per}}$.
\end{proof}

\begin{remark}
We remark that a $\ca{BV}$-extension set $\Omega$ is always a $\ca{BV}\cap L^\infty$-extension set. This is seen by extending a $u \in \ca{BV}(\Omega)\cap L^\infty(\Omega)$ first as a function $Eu \in \ca{BV}(\Omega)$ and then cutting it from below and above by the essential infimum and supremum of $u$ in $\Omega$. By Theorem \ref{thm:BV0}, we then conclude that $\Omega$ also has the extension property for sets of finite perimeter.
\end{remark}

\begin{example}\label{ex:BVfail}
We consider an example of a domain in a metric measure space which has the extension property for sets of finite perimeter, but it is not a ${\ca{BV}}$-extension domain.
Let us consider in $\mathbb{R}^2$ the following sequence of sets. We define for $k \in \mathbb{N}$, the sets
\[
T_k:= (1-2^{-k},1-2^{-(k+1)})\times (1-2^{-k},\mathbb{R}) \cup (1-2^{-k},\mathbb{R}) \times (1-2^{-k},1-2^{-(k+1)}).
\]
We consider as $\Omega$ the open triangle with vertices in the points $(0,0)$, $(0,1)$ and $(1,0)$. We define the auxiliary sets $S_k:= \partial T_k \cup (\partial \Omega \cap T_k)$ for $k \in \mathbb{N}$. Let us define the function $\rho \in L^1(\mathcal{L}^2)$ as $\rho = 1$ on $T_k$ whenever $k=2n$ with $n \in \mathbb{N}$ and $\rho =(2^{k+2} \sfd(\cdot,S_k))\wedge 1$, whenever $k=2n+1$ with $n \in \mathbb{N}$. Moreover, we extend $\rho$ to be $0$ outside. Let us consider the metric measure space $(\mathbb{R}^2,|\cdot|,\rho \mathcal{L}^2)$.
The main objects of the construction are represented in Figure \ref{fig:example_perext_notbvdotext}.

\begin{figure}[h]
    \centering
    \includegraphics[scale=0.4]{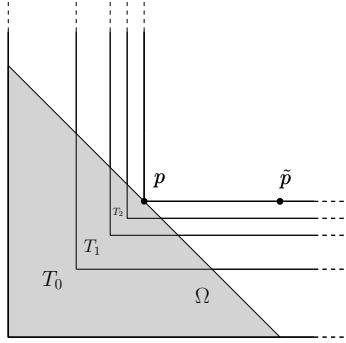}
    \caption{The domain $\Omega$ of Example \ref{ex:BVfail} having the extension property for sets of finite perimeter, but not for $\ca{BV}$. The reference measure is constructed so that a function in $\Omega$ may blow up when approaching the point $p$, but still have zero total variation. It is shown that there exists such function for which any extension with zero total variation fails to be locally integrable at the point $\tilde p$. }
    \label{fig:example_perext_notbvdotext}
\end{figure}

We claim that $\Omega$ has the extension property for sets of finite perimeter. To do so, let us work for a moment in the Euclidean setting and let us consider the set $T_0$ as defined above and $\Omega_0:=T_0\cap \Omega$. We have that $\Omega_0$ is a Lipschitz domain in $\mathbb{R}^2$ and thus a $BV$-extension domain with respect to the Lebesgue measure. Therefore, $\Omega_0$ is also a $\ca{BV}$-extension domain by \cite[Lemma 2.1]{KMS2010} with respect to the Lebesgue measure, and so it also has the extension property for sets of finite perimeter by Theorem \ref{thm:BM}.
Let us call $H\colon \{ \,\text{sets of finite perimeter in }\Omega_0\, \} \to \{ \,\text{sets of finite perimeter in }\mathbb{R}^2\, \}$ the extension operator. In particular, the operator 
\[F \colon \{ \,\text{sets of finite perimeter in }\Omega_0\, \} \to \{ \,\text{sets of finite perimeter in }T_0\, \}\]
defined as $F:= {\rm restr} \circ H$, where ${\rm restr}$ is the restriction operator to sets of finite perimeter in $T_0$ verifies
\begin{equation}
    \label{eq:continuity_for_F}
    {\rm Per}(F(E),T_0)\le C_{\Omega_0}\,{\rm Per}(E,\Omega_0).
\end{equation}
for some $C_{\Omega_0} >0$. Let us denote by $i_k\colon T_0 \to T_k$ the natural homothety rescaling.
Given $E \subset \Omega$ such that ${\rm Per}(E,\Omega)<\infty$,
we then define the following extension. Let the set $\tilde{F}(E)$ be defined as:
\begin{equation*}
\begin{array}{ll}
    \tilde{F}(E)\cap T_k= i_k(F(i_k^{-1}(E \cap T_k)))&\text{if }k=2n\text{ for }n \in \mathbb{N},\\
    \tilde{F}(E)\cap T_k= E \cap T_k&\text{if }k=2n+1\text{ for }n \in \mathbb{N}.
\end{array}
\end{equation*}
We now show $\tilde{F}$ is the extension operator for sets of finite perimeter. It follows from the definition of $\tilde{F}$ that $\tilde{F}(E) \cap \Omega = E$; moreover,
\begin{equation*}
    \begin{aligned}
     {\rm Per}(\tilde{F}(E))& = \sum_{k\text{ even}} {\rm Per}(\tilde{F}(E), T_k)=\sum_{k\text{ even}} {\rm Per}(i_k(F(i_k^{-1}(E \cap T_k))),i_k(T))\\
     &= \sum_{k\text{ even}} 2^{-k}\, {\rm Per}(F(i_k^{-1}(E \cap T_k)),T) \stackrel{\eqref{eq:continuity_for_F}}{\le} C_{\Omega_0} \,\sum_{k\text{ even}} 2^{-k} \,{\rm Per}(i_k^{-1}(E \cap T_k),i_k^{-1}(\Omega\cap T_k)) \\
     &=C_{\Omega_0}\,\sum_{k\text{ even}}\,{\rm Per}(E \cap T_k,\Omega\cap T_k) \le C_{\Omega_0} {\rm Per}(E,\Omega),
    \end{aligned}
\end{equation*}
thus concluding the claim.
We now prove that $\Omega$ is not a ${\ca{BV}}$-extension domain. To do so, let us define the function
\begin{equation*}
    u:=\left\{\begin{array}{lll} 2^k &\text{on }T_k \cap \Omega\text{ for }k=2n&\text{ with }n \in \mathbb{N},\\
    0 &\text{on }T_k\cap \Omega\text{ for }k=2n+1&\text{ with }n \in \mathbb{N}.\\
    \end{array}\right.
\end{equation*}
We check that $u \in L^1(\Omega,\rho\mathcal{L}^2)$. To do so, it is enough to check that, given a sufficiently small $r >0$, we have $u \in L^1(B_r(p))$ for $p=(1,1)$.
To do so, we compute
\begin{equation*}
    \| u \|_{L^1(B_r(p))} \le \sum_{k=0}^\infty 2^k |T_k \cap \Omega| = \sum_{k=0}^\infty 2^{k} 2^{-2k} |T_0| < |T_0|<\infty.
\end{equation*}
Assume that there exists an extension operator $\bar{F} \colon {\ca{BV}}(\Omega) \to {\ca{BV}}(\mathbb{R}^2,|\cdot|,\rho\mathcal{L}^2)$ and call $\tilde{u}:=\bar{F} u$. Then we would have that $|D \tilde{u}|(\mathbb{R}^2) \le C |D u|(\Omega)=0$, which gives that $|D \tilde{u}|(T_k)=0$ for every $k \in \mathbb{N}$. Thus, we can characterize $\tilde{u}$ as
\begin{equation*}
    \tilde{u}=\left\{\begin{array}{lll} 2^k &\text{on }T_k\text{ for }k=2n&\text{ with }n \in \mathbb{N},\\
    0 &\text{on }T_k\text{ for }k=2n+1&\text{ with }n \in \mathbb{N}.\\
    \end{array}\right.
\end{equation*}
The contradiction lies in the fact that $\tilde{u}\notin L^1_{\textrm{loc}}(\rho \mathcal{L}^2)$.
Indeed, let us consider the point $\tilde{p}=(2,1)$ and we take the cube centered at $\tilde{p}$ with sizes $r$ and we denote it by $Q_r(\tilde{p})$. We have 
\begin{equation*}
    \| \tilde{u} \|_{L^1(Q_r(\tilde{p}))} \ge \sum_{k\ge \bar{k}(r)} 2^k \mathcal{L}^2(T_k \cap Q_r(\tilde{p})) \ge C \sum_{k\ge \bar{k}(r)} 2^k\, 2^{-k} > \infty.
\end{equation*}
thus having a contradiction.
\end{example}
%
%

Next we will prove a slightly more general version of Theorem \ref{thm:BV}. 

\begin{proposition}
\label{prop:fullnorm_equivalence}
 A Borel subset $\Omega \subset \X$ is a $BV$-extension set if and only if it has the extension property for sets of finite perimeter with the full norm.
\end{proposition}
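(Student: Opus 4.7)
The plan is to adapt the argument of Proposition \ref{prop:weak_equivalence} while tracking the $L^1$ part of the norm alongside the total variation.

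For the forward direction: given a Borel set $F \subset \Omega$ with $P_\Omega(F) < \infty$, the function $\chi_F$ lies in $BV(\Omega)$ with full norm $\mm(F) + P_\Omega(F)$. I would apply the $BV$-extension operator of norm $C$ and then compose with the $1$-Lipschitz truncation $\varphi(t) := \max\{\min\{t,1\},0\}$; this preserves total variation by Remark \ref{rem:composition_lipschitz_functions} and does not increase the $L^1$ norm because $|\varphi(t)| \le |t|$. The outcome is $v \in BV(\X)$ with values in $[0,1]$, satisfying $v|_\Omega = \chi_F$ $\mm$-a.e.\ and $\|v\|_{BV(\X)} \le C(\mm(F) + P_\Omega(F))$. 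Coarea and Cavalieri give $\|v\|_{BV(\X)} = \int_0^1 [\mm(\{v>t\}) + P(\{v>t\},\X)]\,dt$, so picking $t_0 \in (0,1)$ where the integrand is at most the mean and setting $\tilde F := \{v > t_0\}$ produces the required extension.

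For the reverse direction, I would first reduce to $u \in BV(\Omega)$ with $0 \le u \le 1$. Splitting $u = u^+ - u^-$ and using $1$-Lipschitz composition gives $\|u^\pm\|_{BV(\Omega)} \le \|u\|_{BV(\Omega)}$, so without loss of generality $u \ge 0$. Then I would employ the layer-cake decomposition $u = \sum_{k=0}^\infty w_k$ with $w_k := \min\{(u-k)^+, 1\} \in [0,1]$: applying coarea to $w_k$ (using $\{w_k > s\} = \{u > k+s\}$ for $s \in (0,1)$) yields $|Dw_k|_\Omega(\Omega) = \int_k^{k+1} P_\Omega(\{u>t\})\,dt$, while Cavalieri gives $\int w_k\,d\mm = \int_k^{k+1} \mm(\{u>t\})\,dt$; summing over $k$ produces $\sum_{k=0}^\infty \|w_k\|_{BV(\Omega)} = \|u\|_{BV(\Omega)}$. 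If the bounded case below supplies extensions $v_k \in BV(\X)$ of $w_k$ with $\|v_k\|_{BV(\X)} \le C\|w_k\|_{BV(\Omega)}$, then absolute convergence in the Banach space $BV(\X)$ gives $v := \sum_k v_k \in BV(\X)$ with $v|_\Omega = u$ and $\|v\|_{BV(\X)} \le C\|u\|_{BV(\Omega)}$.

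For the bounded case $0 \le u \le 1$, I would reproduce the construction of Proposition \ref{prop:weak_equivalence} with one adjustment: the Markov-selected levels $t_j^n \in I_{j,n} := [(j-1)2^{-n}, j2^{-n}]$ should be chosen to control the \emph{joint} quantity $\mm(E_{t_j^n}) + P_\Omega(E_{t_j^n})$, where $E_t := \{u > t\}$. Applying the full-norm perimeter-extension hypothesis to each $E_{t_j^n}$ and forming $u_n := \sum_{j=1}^{2^n} 2^{-n}\chi_{\tilde E_{t_j^n}}$, the analogue of \eqref{eq:estimate_total_variation_weak_equivalence} applied simultaneously to $\mm$ and $P(\cdot,\X)$ yields $\|u_n\|_{BV(\X)} \le C'_{\textrm{Per}}(1 + 1/n)\|u\|_{BV(\Omega)}$, using $\int_0^1 \mm(E_r)\,dr = \|u\|_{L^1(\Omega)}$ since $u \in [0,1]$. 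Because $0 \le u_n \le 1$, the weak-$L^2(\X,\tilde{\mm})$ compactness and Mazur's lemma argument from Proposition \ref{prop:weak_equivalence} transfers verbatim; the resulting convex combinations converge in $L^1_{\textrm{loc}}(\X)$ to a limit $v \in BV(\X)$ with $v|_\Omega = u$, and the full-norm bound $\|v\|_{BV(\X)} \le C'_{\textrm{Per}}\|u\|_{BV(\Omega)}$ follows by lower semicontinuity of the total variation together with Fatou for the $L^1$ part.

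The main obstacle I anticipate is the unboundedness of a general $u \in BV(\Omega)$ in the reverse direction: the weak-$L^2(\tilde{\mm})$ compactness step of Proposition \ref{prop:weak_equivalence} relies crucially on the uniform pointwise bound of the simple-function approximations, with no direct substitute in the unbounded regime. The layer-cake decomposition above bypasses this by reducing to the bounded case and then gluing the resulting extensions using completeness of $BV(\X)$ for the full norm.
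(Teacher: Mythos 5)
Your proposal is correct and follows essentially the same route as the paper: the forward direction via truncation, coarea, Cavalieri and a mean-value choice of level is identical, and the reverse direction matches the paper's two steps, namely the bounded case handled by the joint Markov selection of levels, full-norm perimeter extension of level sets, weak $L^2(\X,\tilde{\mm})$ compactness plus Mazur, and lower semicontinuity of $f \mapsto \|f\|_{L^1(\X)} + |Df|(\X)$, followed by a reduction of general $u$ to bounded pieces. Your layer-cake decomposition $u = \sum_k \min\{(u-k)^+,1\}$ (after splitting $u = u^+ - u^-$) is the same unit-height truncation device as the paper's functions $\varphi_i$, so the only differences are cosmetic.
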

\begin{proof}
We prove the only if part. Thus, we assume $\Omega$ is a $BV$-extension set. Let $S\subset \Omega$ be such that $\mm(S) +P_{\Omega}(S)< \infty$ and consider $u := \chi_S \in BV(\Omega)$. Then, considering $E$ the extension operator given by the assumption, we have $E u \in BV(\X)$. Moreover, arguing as in the only if part of the proof of Proposition \ref{prop:weak_equivalence}, we may assume that $E u$ takes values in $[0,1]$ (further assuming that $\varphi$ defined therein satisfies $|\varphi(t)| \le |t|$ for $t \in \mathbb{R}$) and
\[ 
\int_0^1 \mm(\{ E u > t \})+P(\{ E u > t \},\X)\,\d t = \| E u \|_{BV(\X)} \le \| E \|_{BV}(\mm(S)+P_{\Omega}(S))
\]
where we applied firstly coarea formula together with Cavalieri's formula and then item i) in Definition \ref{def:A_extension_domains}.
We choose $t_0 \in I$ such that 
\begin{equation}
\mm(\{ E u > s\})+P(\{ E u >s \},\X) \le \int_0^1 \mm(\{ E u > t\})+P(\{ E u >t \},\X)\,\d t
\end{equation}
and we have that $\tilde{S}:= \{ E u >t_0 \}$ is the desired extension with the choice of the constant $C_\textrm{Per}'=\|E\|_{BV}$.

We then prove the if part. Thus, we assume $\Omega$ to have the extension property for sets of finite perimeter for the full norm.

{\color{blue} Step 1}. Firstly, we prove that $\Omega$ is a $(BV \cap L^\infty, \|\cdot\|_{BV})$-extension set. We follow the proof of the if part in Theorem \ref{thm:BV0}, stressing the main differences.
We consider $u \in BV(\Omega) \cap L^\infty(\Omega)$ and we assume without loss of generality, by similar arguments as before, that $0 \le u \le 1$ $\mm$-almost everywhere. By applying the coarea formula and the Cavalieri's identity, defining $E_t:=\{ u > t \}$, we have:
\begin{equation*}
    \int |u|\,\d \mm + |D u|_{\Omega}(\Omega) =   \int_0^1 (\mm(E_t)+P_{\Omega}(E_t))\,\d t.
\end{equation*}
In particular, $\mm(E_t)+P_{\Omega}(E_t) < \infty$ for $t \in [0,1] \setminus N$, with $\mathcal{L}^1(N) = 0$.
For such $t$'s, we define, by our assumption on extendability of sets of finite perimeter, a Borel set $\tilde{E}_t$ such that $\tilde{E}_t \cap \Omega = E_t$ a.e.\ on $\Omega$ and 
\begin{equation}
\label{eq:ext_full_norm_superlevelsets}
\mm(\tilde{E}_t)+P(\tilde{E}_t,\X) \le C_{\textrm{Per}}' (\mm(E_t)+P_{\Omega}(E_t)).
\end{equation}
We define $I_{j,n}$ and $\delta_n$ as before and we fix $n$. Given $j$, we have that there exists a Borel set $I \subset I_{j,n}$ with $|I|> \delta_n^{-1}$ such that for every $t \in I$ $\mm(E_t)+P_{\Omega}(E_t) \le \delta_n\int_{I_{j,n}}\mm(E_r)+P_{\Omega}(E_r)\,\d r$. We choose $t_j \in I \cap ([0,1] \setminus N)$ and we define $u_n =\sum_{j=1}^{2^n} 2^{-n} \chi_{\tilde{E}_{t_j}}$. We compute
\begin{equation*}
    \begin{aligned}
    & \| u_n \|_{L^1(\mm)} = 2^{-n} \sum_{j=1}^{2^n} \mm(\tilde{E}_{t_j}),\qquad |D u_n|(\X) \le 2^{-n} \sum_{j=1}^{2^n} P(\tilde{E}_{t_j},\X).\\
    \end{aligned}
\end{equation*}
Combining the last two inequalities and \eqref{eq:ext_full_norm_superlevelsets}, we have
\begin{equation*}
\begin{aligned}
    \| u_n \|_{L^1(\X)}+ |D u_n|(\X) &\le C_{\textrm{Per}}' 2^{-n} \sum_{j=1}^{2^n} (\mm(E_{t_j})+ P_{\Omega}(E_{t_j})) \le C_{\textrm{Per}}' (1+\frac{1}{n}) \int_0^1 \mm(E_t)+ P_{\Omega}(E_t)\,\d t \\
    & = C_{\textrm{Per}}' (1+\frac{1}{n}) (\| u \|_{L^1(\Omega)}+ |D u|_{\Omega}(\Omega)). \\
\end{aligned}
\end{equation*}
Hence we get that $\| u_n\|_{BV(\X)} \le C_{\textrm{Per}}'(1+1/n) \| u\|_{BV(\Omega)}$. Arguing as before with the help of Mazur's lemma, we can find $v_n \in BV(\X)$ such that $\|v_n\|_{BV(\X)} \le \| u_n\|_{BV(\X)}$, $\|v_n\|_{L^\infty(\X)}\le 1$, $v_n \to v$ in $L^1_{\textrm{loc}}(\X)$, $v_n \to u$ in $L^1_{\textrm{loc}}(\Omega)$. Hence $v=u$ $\mm$-a.e.\ on $\Omega$ and, using the lower semicontinuity of the map $L^1_{{\rm loc}}(\X) \ni f \mapsto \| f \|_{L^1(\X)}+|D f|(\X)$, we have $\|v\|_{BV(\X)} \le C_{\textrm{Per}}'\| u\|_{BV(\Omega)}$.

{\color{blue} Step 2.} To conclude, we prove that, if $\Omega$ is a $(BV \cap L^\infty,\|\cdot\|_{BV})$-extension set, then it is a $BV$-extension set. Indeed, let $u \in BV(\Omega)$. For every $i$, define $\varphi_i \colon \mathbb{R} \to \mathbb{R}$ as
\[
 \varphi_i(t) = \begin{cases} -1, & \text{if }t < -i-1,\\
 t+i,&\text{if }-i-1 \le t \le -i,\\
 0, &\text{if }-i<t<i,\\
 t-i,&\text{if }i \le t \le i+1,\\
 1,&\text{if }i+1 < t.
 \end{cases}
\]
It is straightforward to check that $\sum_{i=0}^\infty \varphi_i =1$ on $\mathbb{R}$. Moreover, defining $u_i=\varphi_i \circ u$, we have that $u=\sum_{i=0}^{\infty} u_i$ and, by means of Cavalieri's formula and coarea formula, it holds that $\| u\|_{BV(\Omega)}=\sum_{i=0}^{\infty} \| u_i\|_{BV(\Omega)}$. Fix $i$; since $u_i \in BV(\Omega)\cap L^\infty(\Omega)$, by assumption we know that there exists $v_i \in BV(\X)$ such that $\| v_i\|_{BV(\X)} \le C_{\textrm{Per}}' \| u_i\|_{BV(\Omega)}$. We define $v:=\sum_{i=0}^\infty v_i$ and we notice that $v =u$ $\mm$-a.e.\ on $\Omega$ and
\begin{equation}
    \| v\|_{BV(\X)} \le \sum_{i=0}^\infty \|v_i\|_{BV(\X)} \le C_{\textrm{Per}}' \sum_{i=0}^\infty \|u_i\|_{BV(\Omega)} = C_{\textrm{Per}}' \| u \|_{BV(\Omega)},
\end{equation}
thus concluding the proof.
\end{proof}
We turn now to the equivalence for strong extension properties. 
We state these results only for open sets $\Omega$.
The if part in this case needs a modification of the argument, in spirit of the recent work \cite{BR21}.
In the proof, we will use the following known proposition.
\begin{lemma}
\label{lemma:lusin_uniform_convergence}
Let $g_n \colon [0,1] \to \mathbb{R}$ be an increasing (or decreasing) sequence of measurable functions pointwise converging to $g \colon [0,1] \to \mathbb{R}$. For every $\varepsilon>0$, there exists a compact set $K \subset [0,1]$ such that $\mathcal L^1([0,1] \setminus K) \le \varepsilon$ for which $g_n \to g$ uniformly on $K$.
\end{lemma}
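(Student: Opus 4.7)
The plan is to recognize this statement as a Lusin-type refinement of Egorov's theorem specialized to monotone sequences on a finite measure space, and then to produce a compact set inside the Egorov set by inner regularity of Lebesgue measure.

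First, I would reduce to the case where $g_n$ is increasing (the decreasing case follows by passing to $-g_n$). Since $[0,1]$ has finite Lebesgue measure, the standard construction goes as follows. For each $k \in \mathbb{N}$, define
\[
 E_{n,k} := \bigcap_{m \ge n} \Bigl\{x \in [0,1] : |g_m(x) - g(x)| < 1/k \Bigr\}.
\]
Each $E_{n,k}$ is measurable (as $g_m - g$ is measurable), and by pointwise convergence $E_{n,k} \uparrow [0,1]$ in $n$ for each fixed $k$. Hence for each $k$ there is $n_k$ with $\mathcal{L}^1([0,1] \setminus E_{n_k,k}) < \varepsilon/2^{k+1}$. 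Setting $A := \bigcap_{k \ge 1} E_{n_k,k}$ gives a measurable set with $\mathcal{L}^1([0,1] \setminus A) < \varepsilon/2$, and by construction $\sup_{x \in A}|g_m(x) - g(x)| \le 1/k$ for all $m \ge n_k$, i.e.\ $g_n \to g$ uniformly on $A$.

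Second, I would pass from the measurable set $A$ to a compact one. By inner regularity of $\mathcal{L}^1$ on $[0,1]$, there exists a compact set $K \subset A$ with $\mathcal{L}^1(A \setminus K) < \varepsilon/2$. Then $\mathcal{L}^1([0,1] \setminus K) \le \varepsilon$, and since uniform convergence on $A$ restricts to uniform convergence on any subset, $g_n \to g$ uniformly on $K$.

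There is no real obstacle here: the argument is a textbook adaptation of Egorov's theorem combined with inner regularity. The only point that deserves a brief remark is that the monotonicity hypothesis is in fact not used in the proof at all; it merely ensures that the target function $g$ (which is assumed $\mathbb{R}$-valued) is automatically measurable as the pointwise monotone limit of measurable functions, so that the sets $E_{n,k}$ are measurable without any additional assumption.
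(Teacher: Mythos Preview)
Your argument is correct: it is the standard Egorov construction followed by inner regularity of Lebesgue measure, and every step goes through as written. The paper itself does not supply a proof of this lemma at all; it simply records it as a ``known proposition'' and moves on, so there is nothing to compare against. Your remark that monotonicity is not really needed is accurate---it just guarantees measurability of the limit $g$ (and, if one wished, would let you replace the intersection $\bigcap_{m\ge n}$ in the definition of $E_{n,k}$ by the single condition $g(x)-g_n(x)<1/k$).
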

\begin{proposition}
\label{prop:strong_equivalence}
Let $\Omega$ be an open set. Then $\Omega$ is a strong $BV$-extension set if and only if it has the strong extension property for the sets of finite perimeter with the full norm.
\end{proposition}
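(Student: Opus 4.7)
The plan is to follow the outline of Proposition \ref{prop:fullnorm_equivalence} while tracking the total variation carried by $\partial\Omega$. For the only-if direction I would read the conclusion off the coarea formula: given the strong $BV$-extension operator $E$ and $S\subset \Omega$ with $\mm(S)+P_\Omega(S)<\infty$, apply $E$ to $\chi_S$ and truncate the output to $[0,1]$ by post-composition with a $1$-Lipschitz cutoff without raising the $BV$-norm; coarea plus Cavalieri produce simultaneously $\int_0^1 (\mm(\{E\chi_S > t\})+P(\{E\chi_S > t\},\X))\,dt = \|E\chi_S\|_{BV(\X)}$ and $\int_0^1 P(\{E\chi_S > t\},\partial\Omega)\,dt = |DE\chi_S|(\partial\Omega) = 0$. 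Choosing $t_0 \in (0,1)$ at which the first integrand is bounded by the full integral and the second vanishes gives $\tilde S := \{E\chi_S > t_0\}$ satisfying (i)--(iii) of Definition \ref{def:strong_ext_prop}.

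For the if direction I would rerun the argument of Proposition \ref{prop:fullnorm_equivalence}: reduce first to $0 \le u \le 1$ in $BV(\Omega) \cap L^\infty(\Omega)$; for a.e.\ $t$ invoke the strong set-extension hypothesis to pick $\tilde E_t$ extending $\{u > t\}$ with the full-norm bound and $P(\tilde E_t, \partial\Omega) = 0$; form the step functions $u_n := 2^{-n}\sum_j \chi_{\tilde E_{t_j^{(n)}}}$ with the same averaging choice of $t_j^{(n)} \in I_{j,n}$ used there. Besides the usual $\|u_n\|_{BV(\X)} \le (1+\tfrac{1}{n})C_{\textrm{Per}}'\|u\|_{BV(\Omega)}$, the crucial new fact is $|Du_n|(\partial\Omega) \le 2^{-n}\sum_j P(\tilde E_{t_j^{(n)}}, \partial\Omega) = 0$. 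Mazur's lemma applied to a weak $L^2(\X,\tilde\mm)$-limit of $\{u_n\}$ then yields convex combinations $w_n$ converging in $L^1_{\loc}(\X)$ to some $v$ with $v=u$ a.e.\ on $\Omega$ and $\|v\|_{BV(\X)} \le C_{\textrm{Per}}'\|u\|_{BV(\Omega)}$. A general $u \in BV(\Omega)$ is handled by the decomposition $u=\sum_i \varphi_i\circ u$ of Step 2 of that proposition, using $\sigma$-subadditivity of $|Dv|$ on $\partial\Omega$ to conclude.

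The hard part is to upgrade $|Du_n|(\partial\Omega)=0$ to $|Dv|(\partial\Omega)=0$, since $\partial\Omega$ is closed and lower semicontinuity of the total variation holds only on open sets. My plan is to write $|Dv|(\partial\Omega) = \lim_{\delta \downarrow 0}|Dv|(U_\delta)$ on open tubes $U_\delta := B(\partial\Omega,\delta)$, use lower semicontinuity to get $|Dv|(U_\delta) \le \liminf_n |Dw_n|(U_\delta)$, and bound $|Dw_n|(U_\delta)$ by the $\limsup_m$ of the Riemann-type sums $S_m(\delta) := 2^{-m}\sum_j P(\tilde E_{t_j^{(m)}}, U_\delta)$ (since $w_n$ is a convex combination of $u_{n_{k_i}}$'s with $n_{k_i}\ge n$). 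The integrand $g_\delta(t) := P(\tilde E_t, U_\delta)$ decreases pointwise to zero as $\delta \downarrow 0$ and is dominated by the integrable $t \mapsto P(\tilde E_t, \X)$; Lemma \ref{lemma:lusin_uniform_convergence} provides a compact $K_\varepsilon \subset [0,1]$ of measure at least $1-\varepsilon$ on which $g_\delta \to 0$ uniformly. Choosing $t_j^{(m)}$ inside the averaging-admissible subset of $I_{j,m}\cap K_\varepsilon$ whenever that intersection is non-empty, and using the averaging bound together with absolute continuity of the integrable map $r \mapsto \mm(\{u>r\})+P_\Omega(\{u>r\})$ to control the $O(\varepsilon)$-measure of remaining bad intervals, one arrives at $\lim_{\delta \downarrow 0}\limsup_m S_m(\delta) = 0$, which forces $|Dv|(\partial\Omega) = 0$ and completes the proof.
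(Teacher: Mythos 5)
Your proposal is correct and follows essentially the same route as the paper: the only-if direction via the coarea formula on $\partial\Omega$ and a level $t_0$ chosen good for both the norm bound and $P(\{Eu>t_0\},\partial\Omega)=0$, and the if-direction by rerunning Proposition \ref{prop:fullnorm_equivalence} with levels selected, via Lemma \ref{lemma:lusin_uniform_convergence} applied to $t\mapsto P(\tilde E_t,B(\partial\Omega,\delta))$, inside a compact set of uniform convergence, controlling the bad intervals through absolute continuity of $t\mapsto \mm(E_t)+P_\Omega(E_t)$ (the paper's $w(\delta)$), using lower semicontinuity only on the open tubes $B(\partial\Omega,\delta)$, and finishing general $u\in BV(\Omega)$ by the truncation decomposition and $\sigma$-subadditivity. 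The only caveat, present at the same level of detail in the paper's own write-up, is the bookkeeping needed to couple the Egorov parameter with the dyadic scale so that a single sequence $u_m$ (hence a single limit $v$) works for all tubes simultaneously.
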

\begin{proof}
 We first prove the only if part. We repeat the arguments of the proof of Proposition \ref{prop:fullnorm_equivalence}, with $Eu \in BV(\X)$ defined by assumption. By \eqref{eq:no_charging_bdry_definition} and coarea formula 
\[
0 =|D u|(\partial \Omega) = \int_0^1 P(\{ u > t \},\partial \Omega)\,\d t,
\]
and thus $P(\{ u> t\},\partial \Omega) = 0$ for a.e.\ $t$. Choosing $t_0$ also outside of this exceptional set, we get that $\tilde{S}:=\{ u > t_0\}$ verifies items i)-iii) of Definition \ref{def:strong_ext_prop}.
\medskip

Let us then prove the if part. 

{\color{blue} Step 1}. Firstly, we prove that $\Omega$ is a $(BV \cap L^\infty, \|\cdot\|_{BV})$-extension set and given $E$ the extension operator, it holds $|D (E u)|(\partial\Omega)=0$. We again repeat the arguments of the if part in the proof of Proposition \ref{prop:weak_equivalence}, pointing out the differences. We assume without loss of generality that $0 \le u \le 1$ and define $E_t := \{ u > t\}$. By the coarea formula, we know that $E_t$ has finite perimeter in $\Omega$, so we define $\tilde{E}_t$ satisfying the assumptions of Definition \ref{def:strong_ext_prop} and in the negligible set we define $\tilde{E}_t:= E_t$. We apply Lemma \ref{lemma:lusin_uniform_convergence} with $g_k(t) := P(\tilde{E}_t,B(\partial \Omega,2^{-k}))$ and $g(t) := 0$ and we define $K_m$ as the set given by the lemma for $\varepsilon = 2^{-m}$.
We define $I_{j,n}$, $\delta_n$ as before and fix $n$. We choose $\tilde{I}_i^n\subset {I}_i^n$ as in the if part of the proof of Proposition \ref{prop:weak_equivalence}.
For every $i$, we choose $t_i^n \in I_i^n \cap K_m$ if $I_i^n \cap K_m \neq \emptyset$, otherwise $t_i^n \in I_i^n$.
We define $u_n:= \sum_{i=1}^{2^{n}} 2^{-n} \chi_{\tilde{E}_{t_i^n}}$.
By uniform convergence of $t \mapsto P(E_t,B(\partial \Omega,2^{-k}))$ to $0$ on $K_m$, we know that for every $\varepsilon>0$ there exists $\bar{k} = \bar{k}(\varepsilon)$ such that $P(E_t,B(\partial \Omega,2^{-k})) \le \varepsilon$ for $k \ge \bar{k}$.
We define $S:=\{ i: \tilde{I}_i^n \cap K_m = \emptyset \} \subset \{1,\dots,2^{n}\}$ and
\[ w(\delta):= \sup \left\{ \int_A \mm(E_s)+P(E_s,\Omega)\, \d s:\, A \subset [0,1],\,\mathcal{L}^1(A) = \delta \right\}.\]
We compute
\begin{equation*}
\begin{aligned}
|D u_n|(B(\partial \Omega, 2^{-\bar{k}})) &\le \sum_{i=1}^{2^n} 2^{-n}P(\tilde{E}_{t_i^n},B(\partial \Omega, 2^{-\bar{k}}))\\
& = \sum_{i \in S^c} 2^{-n}P(\tilde{E}_{t_i^n},B(\partial \Omega, 2^{-\bar{k}})) + \sum_{i \in S} 2^{-n}P(\tilde{E}_{t_i^n},B(\partial \Omega, 2^{-\bar{k}}))\\
& \le \sum_{i \in S^c} 2^{-n}P(\tilde{E}_{t_i^n},B(\partial \Omega, 2^{-\bar{k}})) + \sum_{i \in S} 2^{-n}P(\tilde{E}_{t_i^n},\X)\\
& \le \varepsilon + (1+1/n)\,C_{\textrm{Per}}'\, \sum_{i\in S} \int_{\tilde{I}_i^n}\mm(E_t)+P(E_t,\Omega)\,\d t \\
&\le \varepsilon + (1+1/n)\,C_{\textrm{Per}}'\,\int_{\cup_{i \in S} \tilde{I}_i^n} \mm(E_t)+P(E_t,\Omega)\,\d t  \\
& \le \varepsilon + (1+1/n)\,C_{\textrm{Per}}'\,\int_{K_m^c} \mm(E_t)+P(E_t,\Omega)\,\d t \le \varepsilon + (1+1/n)\, C_{\textrm{Per}}' w(2^{-m}),
\end{aligned}
\end{equation*}
where the inclusion $\bigcup_{i \in S}\tilde{I}_{i}^n \subset K_m^c$ follows by the definition of $S$. We choose $\varepsilon = 2^{-m}$.
By applying again Mazur's lemma as in the proof of Proposition \ref{prop:fullnorm_equivalence}, we can find $v_n \in BV(\X)$ such that $\|v_n\|_{BV(\X)} \le \| u_n\|_{BV(\Omega)}$, $\|v_n\|_{L^\infty(\X)}\le 1$, $v_n \to v$ in $L^1_{\textrm{loc}}(\X)$, $v_n \to u$ in $L^1_{\textrm{loc}}(\Omega)$. Hence $v=u$ $\mm$-a.e.\ on $\Omega$ and $\| v \|_{BV(\X)} \le C_{\textrm{Per}}' \| u \|_{BV(\Omega)}$.
Moreover, it holds that $|D v_n|(B(\partial \Omega, 2^{-\bar{k}}))\le |D u_n|(B(\partial \Omega, 2^{-\bar{k}}))$.
Hence, we have
\[ |D v|(\partial \Omega) \le |D v|(B(\partial \Omega, 2^{-\bar{k}})) \le 2^{-m} + 2 C_{\textrm{Per}}' w(2^{-m}),\]
where in the last inequality we used
\eqref{eq:lsc_totvar_open} applied to the open set $B(\partial \Omega, 2^{-\bar{k}})$. By taking the limit as $k \to +\infty$, we get that $|D v|(\partial \Omega) = 0$, thus concluding the proof.

{\color{blue} Step 2.} We consider $u \in BV(\Omega)$; we can argue similarly to Step 2 in the proof of Proposition \ref{prop:fullnorm_equivalence} and notice that for the functions $u_i$ we can apply the conclusions of Step 1 of this proposition and call in analogy $v_i$ the extensions; define $v$ accordingly. The conclusion holds by following the arguments of the proof of Proposition \ref{prop:fullnorm_equivalence} together with the inequality $|D v|(\partial \Omega) \le \sum_{i=0}^{\infty} |D v_i|(\partial \Omega) =0$.
\end{proof}
\begin{remark}
We point out that in the proofs of the statements in Proposition \ref{prop:weak_equivalence}, Proposition \ref{prop:fullnorm_equivalence} and Proposition \ref{prop:strong_equivalence} the constants in the extensions are the same constants given by the respective assumptions. This is the idea between the choice of $\delta_n$ in the proofs of these propositions.
\end{remark}
\section{Relations between Sobolev extension domains and BV extension domains}

This section is divided in two parts. In Section \ref{sec:smoothing}, we present a smoothing argument, which is the core idea to prove the main theorems in Section \ref{sec:main_propositions}, relating $W^{1,1}$ and strong $BV$ extension sets. In the final part, some examples are presented, showing in particular the sharpness of the assumption that $\mm(\partial \Omega) >0$ in Proposition \ref{prop:from_wW11_to_sBV}.
All the implications and examples are summarized in Figure \ref{fig:diagram}.
%
\subsection{Smoothing argument}

\label{sec:smoothing}
Here we prove a smoothing argument, which is the main tool we use to relate the notions of $W^{1,1}$ and strong $BV$ extension sets. Another smoothing argument in the Euclidean setting was presented in \cite[Thm.\ 3.1]{BR21} using a Whitney decomposition. That approach gave a linear smoothing operator, but required the use of a Poincar\'e inequality. Here our operator is not linear, but it works without the Poincar\'e inequality.
\begin{proposition}[Smoothing operator]
\label{prop:smoothing}
Let $\Omega \subset \X$ be open. There exists a constant $C$ such that the following holds: for every $\varepsilon >0$, there exists $T_\varepsilon \colon BV(\Omega) \to {\rm Lip}_{\textrm{loc}}(\Omega)$ such that
\begin{equation}
\label{eq:smoothing_normbounds}
\|T_\varepsilon u\|_{L^1(\Omega)} \le \varepsilon + \|u\|_{L^1(\Omega)},\quad
\int_{\Omega} \lip\,T_\varepsilon u\,\d \mm \le C(|D u|(\Omega) +\varepsilon),
\end{equation}
$T_\varepsilon u-u \in BV(\X)$ (when defined to be $0$ in $\X \setminus \Omega$) and
\begin{equation}
    \label{eq:notchargingtheboundary}
    |D(T_\varepsilon u - u )|(\partial \Omega) = 0.
\end{equation}
\end{proposition}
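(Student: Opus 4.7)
The plan is to build $T_\varepsilon u$ as a Lipschitz partition-of-unity gluing of locally Lipschitz approximations of $u$ chosen at dyadically finer scales as one approaches $\partial\Omega$, keyed to the distance function $d(x):=\sfd(x,\X\setminus\Omega)$.

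First I would set up the dyadic scaffolding. Since $|Du|$ is a finite Radon measure on $\Omega$ and only countably many level sets of $d$ can have positive $|Du|$-mass, I pick radii $r_k\in(2^{-k-1},2^{-k}]$ with $|Du|(\{d=r_k\})=0$ and set $\Omega_k:=\{d>r_k\}$, $A_k:=\Omega_k\setminus\bar\Omega_{k-1}$. Lipschitz cutoffs $\tilde\eta_k:\X\to[0,1]$ equal to $1$ on $\Omega_{k-1}$, vanishing outside $\Omega_k$, and of slope at most $C\,2^k$ give, via $\eta_k:=\tilde\eta_{k+1}-\tilde\eta_k$, a Lipschitz partition of unity on $\Omega$ with $\sum_k\eta_k\equiv 1$, $\supp\eta_k\subset\bar A_k\cup\bar A_{k+1}$, and $\lip\eta_k\le C\,2^{k+1}$.

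Next I extract the approximants from the definition of $BV(\Omega)$. After a truncation step to upgrade the $L^1_{\loc}$-convergence supplied by the definition to $L^1$-convergence, I obtain $f_n\in{\rm Lip}_{\loc}(\Omega)$ with $f_n\to u$ in $L^1(\Omega)$ and $\int_\Omega\lip f_n\,d\mm\to|Du|(\Omega)$. Lower semicontinuity on every open set together with mass convergence on all of $\Omega$ forces mass convergence of $\lip f_n\,\mm$ to $|Du|$ on any open $V\subset\Omega$ with $|Du|(\partial V)=0$; by my choice of $r_k$ this gives $\int_{A_j}\lip f_n\,d\mm\to|Du|(A_j)$ for every $j$. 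I then pick indices $n_k$ with
\[
\|f_{n_k}-u\|_{L^1(\Omega)}\le \varepsilon\,4^{-k-1}, \qquad \int_{A_j}\lip f_{n_k}\,d\mm\le |Du|(A_j)+\varepsilon\,4^{-k-1}\ (j\in\{k,k+1\}).
\]

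I would then define $T_\varepsilon u:=\sum_k\eta_k f_{n_k}$, which is locally a finite sum of products of a Lipschitz and a locally Lipschitz function, hence in ${\rm Lip}_{\loc}(\Omega)$. The $L^1$-bound is immediate from the identity $T_\varepsilon u-u=\sum_k\eta_k(f_{n_k}-u)$. For the slope, using $\sum_k\eta_k\equiv 1$ to subtract the constant $u(x)$ from each $f_{n_k}(x)$ in the discrete product rule yields the pointwise estimate
\[
\lip T_\varepsilon u(x)\le \sum_k\eta_k(x)\,\lip f_{n_k}(x)+\sum_k\lip\eta_k(x)\,|f_{n_k}(x)-u(x)|;
\]
integrating and using that at most two summands are nonzero at any given point, the first contribution is bounded by a constant multiple of $|Du|(\Omega)+\varepsilon$ and the second by $\sum_k C\,2^{k+1}\cdot\varepsilon\,4^{-k-1}=C\varepsilon$.

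The boundary property finally comes from applying the same kind of estimate to $T_\varepsilon u-u$ on the shrinking neighborhood $\Omega\setminus\Omega_{K-1}$ of $\partial\Omega$: only tails with $k\ge K-1$ contribute, and the total is controlled by $C(|Du|(\Omega\setminus\Omega_{K-2})+\varepsilon\,2^{-K})$, which vanishes as $K\to\infty$ since $|Du|(\Omega)<\infty$. Since $T_\varepsilon u-u$ extended by zero is supported in $\bar\Omega$, this forces both $T_\varepsilon u-u\in BV(\X)$ and $|D(T_\varepsilon u-u)|(\partial\Omega)=0$. The main obstacle is the second step: arranging a single sequence of Lipschitz approximations that is simultaneously good on all of $\Omega$ and on each shell $A_k$. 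Without a Poincar\'e inequality no local mollifier is available, and this distributional information has to be read out of the $BV$ relaxation definition via the mass convergence of the slope measures $\lip f_n\,\mm$ to $|Du|$.
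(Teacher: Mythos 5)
Your overall architecture is the same as the paper's: a dyadic collar decomposition of $\Omega$ keyed to $\sfd(\cdot,\Omega^c)$, a Lipschitz partition of unity with slopes $\lesssim 2^k$ on the $k$-th shell, a choice of locally Lipschitz approximants with $L^1$-errors decaying like $\varepsilon 4^{-k}$ to beat those slopes, and the discrete product rule with the constant $u(x)$ subtracted. Your one genuine variation — extracting all approximants from a \emph{single} global sequence $f_n$ and deducing $\int_{A_j}\lip f_n\,\d\mm\to|Du|(A_j)$ from mass convergence plus lower semicontinuity on continuity sets $|Du|(\partial A_j\cap\Omega)=0$ — is correct and arguably cleaner than the paper's device of running a separate relaxation sequence $u^i_n$ on each (overlapping) shell $\Omega_i$; it does, however, lean on the unproved remark that for $u\in L^1$ the relaxation may be taken with respect to $L^1(\Omega)$ rather than $L^1_{\loc}(\Omega)$ convergence, since you need $\|f_{n_k}-u\|_{L^1(\Omega)}$ (not just on compacta) to control the cross terms $\lip\eta_k\,|f_{n_k}-u|$ on shells whose closures meet $\partial\Omega$ at distance zero in measure. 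Also make sure your shells are open and cover $\supp\eta_k$ up to a $|Du|$-null \emph{and} $\lip f_{n_k}\mm$-controlled set (use overlapping open shells as in the paper's $\Omega_i$) so that the level sets $\{d=r_k\}$ do not carry uncontrolled slope mass.

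The genuine gap is the last step. From ``$|D(T_\varepsilon u-u)|(\Omega\setminus\Omega_{K-1})\to 0$ and the zero extension is supported in $\bar\Omega$'' you cannot conclude that the zero extension lies in $BV(\X)$ with $|D(T_\varepsilon u-u)|(\partial\Omega)=0$: the function $\chi_\Omega$ is supported in $\bar\Omega$ and has zero variation inside $\Omega$, yet its extension typically charges $\partial\Omega$. The quantity $|D(T_\varepsilon u-u)|(B(\partial\Omega,\delta))$ is a relaxation over an open set of $\X$ that straddles $\partial\Omega$, so it must be estimated by exhibiting locally Lipschitz functions on $\X$ converging in $L^1_{\loc}$ to the zero extension; neither $T_\varepsilon u-u$ nor the partial sums $\sum_{k<K}\eta_k(f_{n_k}-u)$ qualify, because $u$ is not Lipschitz. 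The repair is exactly the paper's $\psi_K$ device, which is available with your ingredients: replace $u$ in each retained term by a further approximant $f_{m_{k,K}}$ with $\|f_{m_{k,K}}-u\|_{L^1(\Omega)}\le\varepsilon\,4^{-k}2^{-K}$ and set $\psi_K:=\sum_{k<K}\eta_k\bigl(f_{n_k}-f_{m_{k,K}}\bigr)$. Each $\psi_K$ vanishes on a neighbourhood of $\partial\Omega$ inside $\Omega$, hence its zero extension is locally Lipschitz on all of $\X$; the improved rates give $\psi_K\to T_\varepsilon u-u$ in $L^1(\X)$; and $\int_{B(\partial\Omega,\delta)}\lip\psi_K\,\d\mm$ is bounded by your tail estimate $C\bigl(|Du|(B(\partial\Omega,\delta)\cap\Omega)+\varepsilon\delta+\delta\bigr)$ uniformly in $K$. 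Lower semicontinuity on the open set $B(\partial\Omega,\delta)$ then yields $T_\varepsilon u-u\in BV(\X)$ and, letting $\delta\to0$, $|D(T_\varepsilon u-u)|(\partial\Omega)=0$. With this insertion your argument is complete.
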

Before going into the proof, let us outline the main idea. Given $\epsilon >0$ and $u \in BV(\Omega)$, we define the smoothing $T_\epsilon u$ as follows. Firstly, we consider a partition of unity subordinated to strips which are thinner close to the boundary of $\partial \Omega$; then, we fix a strip and consider an approximating sequence for the total variation on the strip and select a function in the sequence with sufficiently large index. Finally, we sum up the selected functions using the partition of unity.
Then \eqref{eq:notchargingtheboundary} 
follows by considering larger indexes in the approximations on the strips, and by building a sequence of locally Lipschitz functions $(\psi_k)_k$ converging in $L^1(\Omega)$ to $u-T_{\epsilon} u$ as $k \to \infty$.
Finally, a first order control on $T_\epsilon u$ can be pointwisely estimated by considering an auxiliary sequence of locally Lipschitz approximations of $u$ in $\Omega$ for its total variation. This leads to \eqref{eq:smoothing_normbounds}.
\begin{proof}
We consider $\Omega_0:= \{ \sfd(\cdot,\Omega^c) > 2^{-1} \}$ and $\Omega_i:=\{ 2^{-(i+1)} < \sfd(\cdot,\Omega^c) < 2^{-(i-1)} \}$ for $i \in \mathbb{N}$ and notice that $\Omega = \bigcup_{i=0}^\infty \Omega_i$. 
Notice that as a consequence of the definition of the sets $\{\Omega_i\}_i$, in particular of the fact that $\Omega_i \cap \Omega_{i=2} = \emptyset$ for every $i \in \mathbb{N} \cup \{ 0 \}$, we have:
\begin{equation}
\label{eq:sum_tv_strips}
    \sum_{i=0}^\infty |D u|(\Omega_i) = \sum_{i=0, i\text{ even}}^\infty |D u|(\Omega_i)+\sum_{i=0, i\text{ odd}}^\infty |D u|(\Omega_i) \le 2 |D u|(\Omega).
\end{equation}
We consider $\{\varphi_i\}_{i \in \{0\} \cup \mathbb{N}}$ to be a Lipschitz partition of unity subordinated to the covering $\{\Omega_i\}_{i \in \{0\} \cup \mathbb{N}}$ with the properties 
\begin{equation}
    \varphi_i=0\text{ on }\Omega_i^c,\, \lip \varphi_i \le 2^{i+1}\chi_{\Omega_i}\text{ for every }i\quad\text{ and }\quad\sum_{i=0}^{\infty} \varphi_i =1.
\end{equation}
Indeed, such a family can be easily constructed as follows. We define on $(0,\infty)$ the family of functions
\begin{equation}
    r_0(t):= \begin{cases} 0, & \text{if }t < 2^{-1},\\
 2(t-1/2),&\text{if }2^{-1} \le t \le 1,\\
 1, &\text{if }t>1.
 \end{cases}\qquad r_i(t):= \begin{cases} 
 2^{i+1}(t-2^{-(i+1)}),&\text{if }\text{if }2^{-(i+1)} \le t \le 2^{-i},\\
 2^{i}(2^{-(i-1)}-t), & \text{if }2^{-i} \le t \le 2^{-(i-1)},\\
 0, &\text{if }t \le 2^{-(i+1)}\text{ or }t \ge 2^{-(i-1)}
 \end{cases}\quad 
\end{equation}
for every $i \ge 1$ and notice that $\sum_{i=0}^\infty r_i =1$ on $(0,\infty)$. Then, we define $\varphi_i :=r_i(\sfd(\cdot,\Omega^c))$ and we have $\lip\,\varphi_i \le (\lip\,r_i)(\sfd(\cdot,\Omega^c)) \le 2^{i+1}\chi_{\Omega_i}$, using in the first inequality that $\sfd(\cdot,\Omega^c)$ is $1$-Lipschitz. The remaining properties can be readily checked.

Consider $u \in BV(\Omega)$, so its restrictions belong to $BV(\Omega_i)$ for every $i$. By definition, there exists a sequence of $u_n^i \in {\rm Lip}_{\textrm{loc}}(\Omega_i)$ such that $u_n^i \to u$ in $L^1(\Omega_i)$ and $\int_{\Omega_i} \lip\, u_n^i\,\d \mm \to |D u|(\Omega_i)$.
We consider $n_i$ such that for every $j \ge n_i$ we have $\| u_{j}^i-u \|_{L^1(\Omega_i)} \le \varepsilon 2^{-2i}$ and
\begin{equation*}
\int_{\Omega_i} \lip \,u_j^i\,\d \mm   \le \begin{cases}
 2 |D u|(\Omega_i) & \text{whenever }|D u|(\Omega_i)>0,\\
 \varepsilon \,2^{-i}& \text{whenever }|D u|(\Omega_i)=0.\\
\end{cases}
\end{equation*}
In particular, for every $i$, we have the trivial bound
\begin{equation}
\int_{\Omega_i} \lip \,u_j^i \le 2 |D u|(\Omega_i) + \varepsilon \,2^{-i}\,\text{ for every }j \ge n_i.
    \label{eq:bound_lip_approx}
\end{equation}
We define $u_i:= u^i_{n_i}$ for every $i$. Moreover, for every $i$, there exists $m_{i,k} \in \mathbb{N}$ such that, for every $j \ge m_{i,k}$, $\| u_{j}^i-u \|_{L^1(\Omega_i)}\le \varepsilon 2^{-2i-k}$.
We define $T_\varepsilon u$ as the function $\tilde{u}:= \sum_{i=0}^{\infty} \varphi_i u_i $. Then, $\tilde{u} \in L^1(\Omega) \cap {\rm Lip}_{\textrm{loc}}(\Omega)$ and
\[ \|\tilde{u}\|_{L^1(\mm)} \le \|\tilde{u}-u\|_{L^1(\mm)}+\|u\|_{L^1(\mm)} = \|\sum_{i=0}^\infty \varphi_i (u_i-u)\|_{L^1(\mm)}+ \|u\|_{L^1(\mm)} =\varepsilon + \|u\|_{L^1(\mm)}.\]
We define $\psi_k:= \sum_{i=0}^{k-1} \varphi_i u^i_{\max{\{n_i,m_{i,k}}\}} - \sum_{i=0}^{k-1} \varphi_i u_i$, where $\tilde{\varphi}_k:=\sum_{i \ge k} \varphi_i$ and $u$, $\tilde{u}$ and $\varphi$ are meant with zero extension outside of $\Omega$. 
We check that $\psi_k \to u-\tilde{u}$ in $L^1(\mm)$. We compute
\begin{equation*}
    \begin{aligned}
        \psi_k-(u-\tilde{u}) = \sum_{i=1}^{k-1} \varphi_i (u^i_{\max{\{n_i,m_{i,k}}\}}-u)+\sum_{i=k}^{\infty} \varphi_i (u_i-u).
    \end{aligned}
\end{equation*}
By estimating the $L^1$-norm on both sides and recalling that $\varphi_i$ is supported on $\Omega_i$ we have:
\begin{equation*}
\begin{aligned}
    \| \psi_k-(u-\tilde{u}) \|_{L^1(\Omega)} &\le \sum_{i=1}^{k-1} \|u^i_{\max{\{n_i,m_{i,k}}\}}-u\|_{L^1(\Omega_i)}+\sum_{i=k}^{\infty} \|u_i-u\|_{L^1(\Omega_i)} \\
    &\le \varepsilon 2^{-k}\sum_{i=1}^{k-1} 2^{-2i}+ \varepsilon \sum_{i=k}^\infty 2^{-2i} \le \varepsilon 2^{-(k-1)}.\\
\end{aligned}
\end{equation*}

We prove \eqref{eq:notchargingtheboundary}. Fix $\delta >0$. We consider $\bar{k} = \bar{k}(\delta)$ so that $\delta \in [2^{-(\bar{k}+1)},2^{-\bar{k}}]$; in particular, we have that $\bar{k} \to \infty$ when $\delta \to 0$. Since $\psi_k \to u-\tilde{u}$ on $B(\partial \Omega,\delta)$, by the definition of $|D(u-\tilde{u})|$ on open sets, we have:
\begin{equation*}
\begin{aligned}
    |D (u-\tilde{u})|(B(\partial \Omega,\delta)) & \le \limi_{k \to \infty} \int_{B(\partial \Omega,\delta)} \lip\,\psi_k\,\d\mm \\
    & \le \limi_{k \to \infty}\sum_{i=\bar{k}}^{k} \int \lip \varphi_i\, (|u^i_{\max{\{n_i,m_{i,k}}\}}-u|-|u_i-u|)+ \varphi_i\,(\lip \,u^i_{\max{\{n_i,m_{i,k}}\}}+\lip \,u_i)\,\d \mm \\
    & \stackrel{\eqref{eq:bound_lip_approx}}{\le} \limi_{k \to \infty} \sum_{i=\bar{k}}^{\infty} \left( 2^i (\|u^i_{\max{\{n_i,m_{i,k}}\}}-u\|_{L^1(\Omega_i)}+\|u_i-u\|_{L^1(\Omega_i)})+ 4|D u|(\Omega_i)+ 2 \varepsilon 2^{-i}\right)\\
    & \stackrel{\eqref{eq:sum_tv_strips}}{\le} \limi_{k \to \infty} \sum_{i=\bar{k}}^{k} 2^{i} 2^{-2i-k}+\sum_{i=\bar{k}}^{\infty} 2^{i} 2^{-2i}+8 |D u|(B(\partial \Omega,\delta) \cap \Omega)+4\varepsilon \delta\\
    &\le 4 \delta+8|D u|(B(\partial \Omega,\delta) \cap \Omega)+ 4 \varepsilon \delta.\\
\end{aligned}
\end{equation*}
We point out that, in the last two lines, we use that, by the choice of $\bar{k}$, $2^{-\bar{k}}\le 2 \delta$. We take the limit as $\delta \to 0$ and  conclude that $| D(u-\tilde{u})|(\partial \Omega)=0$.
Moreover, since we know that $u-\tilde{u} \in BV(U)$, provided $U$ is open and $U \subset X \setminus \bar{\Omega}$ or $U \subset \Omega$, we get that $u-\tilde{u} \in BV(\X)$.
It is left to prove the second inequality in \eqref{eq:smoothing_normbounds}, namely we check that there exists $C>0$ such that $\int_\Omega \lip\,\tilde{u}\,\d \mm \le C (|D \tilde{u}|(\Omega)+\varepsilon)$; in particular, this inequality grants that $\lip \tilde{u}$ is a 1-weak upper gradient of $u$, hence $\tilde{u } \in W^{1,1}(\Omega)$.
To do so, it is enough to show that there exists $C>0$ such that, for every $m$, $\int_{\cup_{i=0}^m \Omega_i} \lip\, u\,\d \mm \le C (|Du|(\Omega)+ \varepsilon)$. Let us prove it.
We consider a sequence $\tilde{u}_m$ such that $\| u-\tilde{u}_m \|_{L^1(\Omega)} \le \varepsilon 2^{-2m}$ and $\int_\Omega \lip \,\tilde{u}_m\,\d \mm \le 2 |D u|(\Omega)$. Hence we can rewrite 
\[
\tilde{u} = \sum_{i=0}^m \varphi_i u_i = \sum_{i=0}^m \varphi_i (u_i-\tilde{u}_m)+ \tilde{u}_m\qquad \text{on }\bigcup_{i=0}^m \Omega_i.
\]
We estimate the slope of $\tilde{u}$
\begin{equation*}
    \lip\,\tilde{u} = \sum_{i=0}^m \left[\,\lip \,\varphi_i \,|u_i-\tilde{u}_m|+\varphi_i\,(\lip\,u_i+\lip\,\tilde{u}_m) \right]+ \lip\,\tilde{u}_m\qquad\text{on }\bigcup_{i=0}^m \Omega_i
\end{equation*}
and integrate 
\begin{equation*}
\begin{aligned}
\int_{\cup_{i=0}^m \Omega_i} \lip\,\tilde{u}\,\d \mm & \le \sum_{i=0}^m \int_{\Omega_i} \left[\,\lip \,\varphi_i \,|u_i-\tilde{u}_m|+\varphi_i\,(\lip\,u_i+\lip\,\tilde{u}_m) \right]\,\d \mm+ \int_{\Omega} \lip\,\tilde{u}_m\,\d \mm\\
& \le \sum_{i=0}^m \left[ \varepsilon 2^i (2^{-2i}+2^{-2m}) + 2 |Du|(\Omega_i) \right] + \int_{\cup_{i=0}^m \Omega_i} \sum_{i=0}^m \varphi_i\,\lip\, \tilde{u}_m\,\d \mm \\
&\quad + 2 |D u|(\Omega) \\
& \stackrel{\eqref{eq:sum_tv_strips}}{\le} 4\varepsilon + 8 |D u|(\Omega).
\end{aligned}
\end{equation*}
\end{proof}
\subsection{Main propositions}
\label{sec:main_propositions}
In this section we conclude the proofs of Theorem \ref{thm:boundaryzero} and Theorem \ref{thm:general} by proving the implications between $W^{1,1}$-, $W_w^{1,1}$- and strong $BV$-extensions. The connection between strong perimeter extension and strong $BV$-extension was already shown in Proposition \ref{prop:strong_equivalence}.

The smoothing argument is a key tool in the proof of the following chain of implications.
\begin{figure}[h]
\[\begin{tikzcd}
	{W^{1,1}} &&&&&&&&&& {W_w^{1,1}} \\
	\\ \\ \\
	&&&&& {\text{s-}BV}
	\arrow["{\text{(Prop.\ \ref{prop:from_wW11_to_sBV})}\,\text{if }\mathfrak{m}(\partial \Omega) = 0}"', from=1-11, to=5-6]
	\arrow["{\text{(Prop.\ref{prop:from_W11_to_wW11})}}", from=1-1, to=1-11]
	\arrow["{(\text{Prop.\ \ref{prop:sBV_to_W11})}}"', from=5-6, to=1-1]
	\arrow["{\text{false if }\mathfrak{m}(\partial \Omega) >0\,(\text{Example\ \ref{ex:slit}})}"{description}, curve={height=24pt}, dashed, from=1-11, to=1-1]
    \arrow["{\text{false if }\mathfrak{m}(\partial \Omega) >0\,(\text{Example\ \ref{ex:W11nonsBV}})}"{description}, curve={height=24pt}, dashed, from=1-1, to=5-6]
\end{tikzcd}\]
\centering
\caption{Summary of main propositions and examples of the section.}
\label{fig:diagram}
\end{figure}
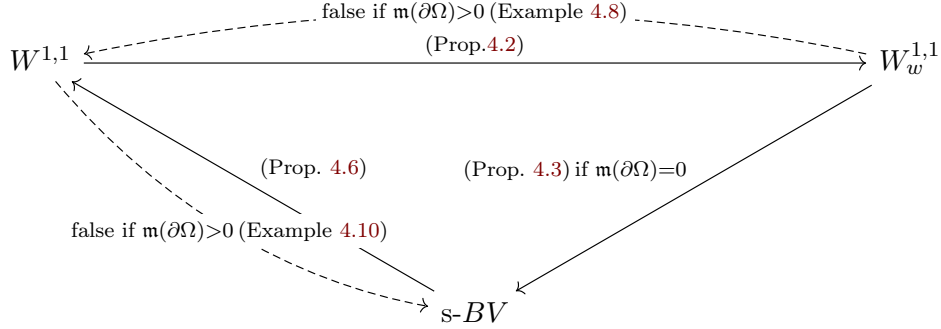

\begin{proposition}
\label{prop:from_W11_to_wW11}
Let $\Omega\subset\X$ be an open set. If $\Omega$ is a $W^{1,1}$-extension set, then $\Omega$ is a $W_w^{1,1}$-extension set.
\end{proposition}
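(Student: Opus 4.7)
The plan is to use the smoothing operator of Proposition~\ref{prop:smoothing} to pass from $W_w^{1,1}(\Omega)$ into $W^{1,1}(\Omega)$, apply the assumed $W^{1,1}$-extension operator there, and then add back the smoothing correction. The correction lives inside $\Omega$ and, crucially, charges no variation on $\partial\Omega$.

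More precisely, let $E\colon W^{1,1}(\Omega)\to W^{1,1}(\X)$ be the hypothesised extension. Given $u\in W_w^{1,1}(\Omega)$, pick $\varepsilon>0$ (eventually chosen proportional to $\|u\|_{W_w^{1,1}(\Omega)}$) and set $v:=T_\varepsilon u$. Since $v\in{\rm Lip}_{\textrm{loc}}(\Omega)$ with $\int_\Omega\lip v\,\d\mm<\infty$ by \eqref{eq:smoothing_normbounds}, the slope $\lip v$ is a $1$-weak upper gradient of $v$ and so $v\in W^{1,1}(\Omega)$. Define
\[
\widetilde u\ :=\ Ev\ -\ (v-u),
\]
where the second term is understood as its zero-extension outside $\Omega$; Proposition~\ref{prop:smoothing} guarantees that this zero-extension lies in $BV(\X)$ and satisfies \eqref{eq:notchargingtheboundary}. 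Clearly $\widetilde u=u$ $\mm$-a.e.\ on $\Omega$.

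The core step is to check that $|D\widetilde u|\ll\mm$. On the one hand $|DEv|\ll\mm$ because $Ev\in W^{1,1}(\X)\subset W_w^{1,1}(\X)$. On the other hand, I would split $|D(v-u)|$ according to the three pieces $\Omega$, $\partial\Omega$, $\X\setminus\overline\Omega$: on $\Omega$ both $v\in W^{1,1}(\Omega)\subset W_w^{1,1}(\Omega)$ and $u\in W_w^{1,1}(\Omega)$, so the restriction of $|D(v-u)|$ to $\Omega$ is absolutely continuous by subadditivity of the total variation; on $\partial\Omega$ the variation vanishes by \eqref{eq:notchargingtheboundary}; on $\X\setminus\overline\Omega$ the function is identically zero. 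Together these give $|D(v-u)|\ll\mm$, hence $\widetilde u\in W_w^{1,1}(\X)$.

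For the norm estimate I would combine the $W^{1,1}$-extension bound for $Ev$ with the two inequalities in \eqref{eq:smoothing_normbounds} to obtain $\|\widetilde u\|_{W_w^{1,1}(\X)}\le C\bigl(\|u\|_{W_w^{1,1}(\Omega)}+\varepsilon\bigr)$, and then choose $\varepsilon=\|u\|_{W_w^{1,1}(\Omega)}$ (sending $u\equiv 0$ to $0$). The main obstacle, and precisely the reason why Proposition~\ref{prop:smoothing} was designed with the boundary property \eqref{eq:notchargingtheboundary}, is the absolute continuity of the correction on $\partial\Omega$: in the generality of the present statement $\mm(\partial\Omega)$ may be positive, so any singular part of $|D(v-u)|$ concentrated on $\partial\Omega$ would destroy the $W_w^{1,1}$-membership of $\widetilde u$ while still leaving $\widetilde u\in BV(\X)$.
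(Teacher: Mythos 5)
Your proposal is correct and is essentially the paper's own proof: your extension $Ev-(v-u)$ (zero-extended) coincides with the paper's $\widetilde E u$ (equal to $u$ on $\Omega$ and $E T_\varepsilon u$ outside), and you establish $|D\widetilde u|\ll\mm$ by the same three-way splitting over $\Omega$, $\partial\Omega$ and $\X\setminus\overline\Omega$ using \eqref{eq:notchargingtheboundary}, followed by the same norm bookkeeping via \eqref{eq:smoothing_normbounds} and the choice $\varepsilon=\|u\|_{W^{1,1}_w(\Omega)}$. The only cosmetic difference is that on $\Omega$ the paper gets the absolutely continuous bound quantitatively from $|D T_\varepsilon u|\restr{\Omega}\le(\lip T_\varepsilon u)\,\mm$ together with $|Du|\ll\mm$, rather than through the inclusion $W^{1,1}(\Omega)\subset W^{1,1}_w(\Omega)$, which the paper records only for the whole space $\X$; since $T_\varepsilon u$ is locally Lipschitz with integrable slope, that direct route is immediate from facts you already invoke.
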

\begin{proof}
We call the extension operator given by the assumption $E \colon W^{1,1}(\Omega) \to W^{1,1}(\X)$. Let $u \in W_w^{1,1}(\Omega)$. Since $W_w^{1,1}(\Omega) \subset BV(\Omega)$, we are in a position to apply the smoothing operator $T_\varepsilon \colon BV(\Omega) \to W^{1,1}(\Omega)$ from Proposition \ref{prop:smoothing} to $u$, defining
\begin{equation}
    \widetilde{E} u:= \begin{cases}
    u & \text{ in }\Omega,\\
    E T_{\varepsilon} u&\text{ in }\X \setminus \Omega.\\
    \end{cases}
\end{equation}
By Proposition \ref{prop:smoothing} we then have $|D(u-T_\varepsilon u)\chi_{\Omega}|(\partial\Omega) = 0$.
Hence, for every open set $U \subset \X$, we have
\begin{equation*}
\begin{aligned}
    |D((u-T_\varepsilon u)\chi_{\Omega})|(U)&=|D(u-T_\varepsilon u)|(\Omega \cap U)+|D(u-T_\varepsilon u)|(\partial\Omega \cap U)\\
    &\quad + |D((u-T_\varepsilon u)\chi_{\Omega})|((\X \setminus \bar{\Omega}) \cap U) = |D(u-T_\varepsilon u)|(\Omega \cap U) \\
    & \le |Du|(\Omega\cap U)+|D T_\varepsilon u|( \Omega\cap U) \le \int_{\Omega \cap U} \frac{\d|D u|}{\d\mm} + \lip \,T_\varepsilon u\,\d \mm.
\end{aligned}
\end{equation*}
This gives that, for every $F \in \mathscr{B}(\X)$,
\begin{equation}
\label{eq:quant_abs_cont}
|D((u-T_\varepsilon u)\chi_{\Omega})|(F)\le \int_{\Omega \cap F} \frac{\d|D u|}{\d\mm} + \lip \,T_\varepsilon u\,\d \mm.
\end{equation}
Therefore, if $\mm(F) =0$, then $|D((u-T_\varepsilon u)\chi_{\Omega})|(F) = 0$. Thus $((u-T_\varepsilon u)\chi_{\Omega}) \in W_w^{1,1}(\X)$.
Moreover, by the definition of $E$, $E T_\varepsilon u \in W^{1,1}(\X) \subset W_w^{1,1}(\X)$; so $\widetilde{E} u = (u-T_\varepsilon u) \chi_{\Omega} + E T_\varepsilon u \in W_w^{1,1}(\X)$. 
Notice that it follows by the very definition of total variation that
\begin{equation}
    \label{eq:tot_var_bdd_by_intslope}
    |D T_\varepsilon u|(\Omega) \le \int_{\Omega} \lip T_\varepsilon u\,\d \mm.
\end{equation}
We estimate
\begin{equation}
\label{eq:L1norm_fW11towW11}
    \| \widetilde{E} u \|_{L^1(\X)} \le \| u \|_{L^1(\Omega)}+ \| E T_\varepsilon u \|_{L^1(\X)}
\end{equation}
and
\begin{equation}
\label{eq:seminorm_fW11towW11}
\begin{aligned}
    \left\|\frac{\d |D \widetilde{E} u|}{\d \mm} \right\|_{L^1(\X)} &= |D \widetilde{E} u|(\X) \le |D((u-T_\varepsilon u)\chi_{\Omega})|(\X)+ |D E T_\varepsilon u|(\X)\\
    &\stackrel{\eqref{eq:quant_abs_cont}}{\le} \left\| \frac{\d |D u|}{\d \mm} \right\|_{L^1(\Omega)}+ \| \lip \,T_\varepsilon u \|_{L^1(\Omega)}+ |D E T_\varepsilon u| (\X).\\
\end{aligned}
\end{equation}
Hence, summing up \eqref{eq:L1norm_fW11towW11} and \eqref{eq:seminorm_fW11towW11}, we get
\begin{equation*}
\begin{aligned}
    \| \widetilde{E} u \|_{W_w^{1,1}(\X)} & \le \| u\|_{W^{1,1}_w(\Omega)} + \| \lip \,T_\varepsilon u \|_{L^1(\Omega)}+\| E\|_{W^{1,1}} \|T_\varepsilon u \|_{W^{1,1}(\Omega)}\\ 
    &\stackrel{\eqref{eq:smoothing_normbounds}}{\le} \| u \|_{W_w^{1,1}(\Omega)}+ C(|D u|(\Omega)+\varepsilon)+ \| T_\varepsilon u \|_{W^{1,1}(\Omega)} \stackrel{\eqref{eq:smoothing_normbounds},\eqref{eq:tot_var_bdd_by_intslope}}{\le} C(\| u \|_{W_w^{1,1}(\Omega)}+\varepsilon),
\end{aligned}
\end{equation*}
where $C=C(\| E\|_{W^{1,1}})$, thus concluding when choosing $\varepsilon:=\| u\|_{W_w^{1,1}(\Omega)}$.
\end{proof}
\begin{proposition}
\label{prop:from_wW11_to_sBV}
Let $\Omega$ be an open set such that $\mm(\partial \Omega) =0$. If $\Omega$ is a $W_w^{1,1}$-extension set, then $\Omega$ is also a strong $BV$-extension set.
\end{proposition}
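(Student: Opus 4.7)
The plan is to mimic the structure of the proof of Proposition \ref{prop:from_W11_to_wW11} but now exploit the absolute continuity coming from the $W_w^{1,1}$-extension together with the assumption $\mm(\partial\Omega) = 0$ to get the no-charge-on-boundary condition.

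Let $F \colon W_w^{1,1}(\Omega) \to W_w^{1,1}(\X)$ be the $W_w^{1,1}$-extension operator given by hypothesis. Take $u \in BV(\Omega)$ and apply the smoothing operator from Proposition \ref{prop:smoothing} to obtain $T_\varepsilon u \in \Lip_{\textrm{loc}}(\Omega)$. First I would observe that $T_\varepsilon u$ in fact lies in $W_w^{1,1}(\Omega)$: by \eqref{eq:smoothing_normbounds} it is in $L^1(\Omega)$, and since $T_\varepsilon u$ is locally Lipschitz, the definition of total variation gives $|DT_\varepsilon u| \le (\lip T_\varepsilon u)\,\mm \ll \mm\restr{\Omega}$ with $\int_{\Omega}\lip T_\varepsilon u\,\d \mm < \infty$.

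With this at hand, I define the extension operator
\begin{equation*}
\widetilde{E} u := FT_\varepsilon u + (u - T_\varepsilon u)\chi_\Omega,
\end{equation*}
where $u - T_\varepsilon u$ is understood to be extended by zero outside $\Omega$. Note that $\widetilde{E} u = u$ on $\Omega$ since $(FT_\varepsilon u)\restr{\Omega} = T_\varepsilon u$. The fact that $(u - T_\varepsilon u)\chi_\Omega \in BV(\X)$ comes directly from Proposition \ref{prop:smoothing}, while $FT_\varepsilon u \in W_w^{1,1}(\X) \subset BV(\X)$ by the extension hypothesis. Hence $\widetilde{E} u \in BV(\X)$.

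The norm control is a direct computation in the style of Proposition \ref{prop:from_W11_to_wW11}: using \eqref{eq:smoothing_normbounds} and $|D T_\varepsilon u|(\Omega)\le \int_\Omega \lip T_\varepsilon u\,\d\mm$, we get $\|T_\varepsilon u\|_{W_w^{1,1}(\Omega)} \le C(\|u\|_{BV(\Omega)} + \varepsilon)$, and then
\begin{equation*}
\|\widetilde{E} u\|_{BV(\X)} \le \|u\|_{L^1(\Omega)} + |Du|(\Omega) + \!\int_{\Omega}\lip T_\varepsilon u\,\d\mm + \|F\|_{W_w^{1,1}} \|T_\varepsilon u\|_{W_w^{1,1}(\Omega)} \le C'(\|u\|_{BV(\Omega)} + \varepsilon),
\end{equation*}
so choosing $\varepsilon = \|u\|_{BV(\Omega)}$ yields the required bound. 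The step I view as the heart of the proof is the boundary estimate. Splitting
\begin{equation*}
|D\widetilde{E} u|(\partial \Omega) \le |D((u - T_\varepsilon u)\chi_\Omega)|(\partial \Omega) + |DFT_\varepsilon u|(\partial \Omega),
\end{equation*}
the first summand vanishes by \eqref{eq:notchargingtheboundary}. For the second, this is exactly where the assumption $\mm(\partial\Omega) = 0$ is used: since $FT_\varepsilon u \in W_w^{1,1}(\X)$, we have $|DFT_\varepsilon u|\ll \mm$, whence $|DFT_\varepsilon u|(\partial \Omega) = 0$. Thus $|D\widetilde{E} u|(\partial \Omega) = 0$, proving $\Omega$ is a strong $BV$-extension set. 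The main obstacle — in fact, the reason the hypothesis $\mm(\partial\Omega) = 0$ is essential — is precisely this last step: without it, the absolute continuity of $|DFT_\varepsilon u|$ alone cannot prevent charge being placed on $\partial\Omega$, and indeed Example \ref{ex:slit} is designed to show that the statement fails when $\mm(\partial\Omega) > 0$.
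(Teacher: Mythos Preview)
Your proof is correct and follows essentially the same approach as the paper: define the extension as $FT_\varepsilon u + (u-T_\varepsilon u)\chi_\Omega$, use Proposition~\ref{prop:smoothing} for the $BV$-membership and the vanishing of $|D((u-T_\varepsilon u)\chi_\Omega)|$ on $\partial\Omega$, and use absolute continuity of $|DFT_\varepsilon u|$ together with $\mm(\partial\Omega)=0$ for the other boundary term. The only cosmetic difference is that you verify $T_\varepsilon u \in W_w^{1,1}(\Omega)$ directly via $\lip T_\varepsilon u$, whereas the paper relies on $T_\varepsilon$ mapping into $W^{1,1}(\Omega)\subset W_w^{1,1}(\Omega)$.
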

\begin{proof}
We call the extension operator given by the assumption $E \colon W_w^{1,1}(\Omega) \to W_w^{1,1}(\X)$. Let $u \in BV(\Omega)$. We are in a position to apply the smoothing $T_\varepsilon \colon BV(\Omega) \to W^{1,1}(\Omega)$ to $u$, defining
\begin{equation}
    \widetilde{E} u:= \begin{cases}
    u & \text{ in }\Omega,\\
    E T_{\varepsilon} u&\text{ in }\X \setminus \Omega.\\
    \end{cases}
\end{equation}
 We check that $\widetilde{E} u \in BV(\X)$; We rewrite $\widetilde{E} u$ as follows
 \begin{equation}
    \widetilde{E}u= (u-T_\varepsilon u) \chi_{\Omega} + E T_\varepsilon u.
 \end{equation}
 From Proposition \ref{prop:smoothing}, $(u-T_\varepsilon u)\,\chi_{\Omega} \in BV(\X)$ and $|D((u-T_\varepsilon u)\,\chi_{\Omega})|(\partial \Omega) =0$; moreover, $\widetilde{E}T_\varepsilon u \in W_w^{1,1}(\X)\subset BV(\X)$, so also $\widetilde{E} u \in BV(\X)$. We check that $|D \widetilde{E}u|(\partial \Omega) =0$. Indeed, using that $|D((u-T_\varepsilon u)\,\chi_{\Omega})|(\partial \Omega) =0$,
\begin{equation}
\label{eq:N11_to_BV_bdry}
|D \widetilde{E} u|(\partial \Omega) \le |D E T_\varepsilon u|(\partial \Omega) = \int_{\partial \Omega} \frac{\d |D E T_\varepsilon u|}{\d \mm}\,\d \mm =0,
\end{equation}
where the last equality follows from the fact that $\partial \Omega$ is $\mm$-negligible. We have that $\| \widetilde{E} u \|_{L^1(\X)} = \| E T_\varepsilon u \|_{L^1(\X \setminus \bar{\Omega})}+ \| u \|_{L^1(\Omega)}$ and
\begin{equation}
    \begin{aligned}
    |D\widetilde{E}u|(X) & \le |D(u-T_\varepsilon u)\chi_{\Omega}|(\X)+|D({E}(T_\varepsilon u))|(\X) \\
    & = |D(u-T_\varepsilon u)\chi_{\Omega}|(\Omega)+ |D(u-T_\varepsilon u)\chi_{\Omega}|(\partial \Omega)\\
    & \quad + |D(u-T_\varepsilon u)\chi_{\Omega}|(\X\setminus \bar{\Omega})+|D(E(T_\varepsilon u))|(\X) \\
    &\le |Du|(\Omega) + |D T_\varepsilon u|(\Omega) +|D(E(T_{\varepsilon}u))|(X).
    \end{aligned}
\end{equation}
Therefore, we have
\begin{equation}
\begin{aligned}
    \| \widetilde{E} u\|_{BV(\X)} &\le \| u \|_{BV(\Omega)}+|D T_\varepsilon u|(\Omega)+ \| E(T_\varepsilon u) \|_{BV(\X)}\\
    &\le \| u\|_{BV(\Omega)}+|D T_\varepsilon u|(\Omega)+\| E\|_{W^{1,1}_w}\| T_\varepsilon u \|_{W^{1,1}_w(\Omega)}\\
    & \stackrel{\eqref{eq:bound_tot_variation_fLip}}{\le} C\left(\| u\|_{BV(\Omega)}+\| T_{\varepsilon} u \|_{L^1(\Omega)}+\int_{\Omega} \,\lip T_\varepsilon u\,\d \mm\right) \le C(\| u \|_{BV(\Omega)}+\varepsilon),\\
\end{aligned}    
\end{equation}
where $C=C(\| E\|_{W^{1,1}_w})$, concluding the proof with the choice $\varepsilon:= \| u\|_{BV(\Omega)}$.
\end{proof}
The following lemma is needed for the implication that strong $BV$-extension sets are $W^{1,1}$-extension sets.
\begin{lemma}
\label{lemma:fromlocal_to_global_W11}
Let $f \in BV(\X)$ such that $f = u$ in $\Omega$ with $u \in W^{1,1}(\Omega)$ and $f = v$ on $\X \setminus \bar{\Omega}$ with $v \in W^{1,1}(\X\setminus \bar{\Omega})$. Moreover, we assume that $|D f|(\partial \Omega) = 0$. Then $f \in W^{1,1}(\X)$ and
\begin{equation}
\label{eq:estimate_1mwug_gluing}
|D f|_{1,\X} \le \chi_{\Omega} |D u|_{1,\Omega} + \chi_{\X \setminus \bar{\Omega}} |D v|_{1,\X\setminus \bar{\Omega}} \quad \mm\text{-a.e.}
\end{equation}
\end{lemma}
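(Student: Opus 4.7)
The natural candidate is
\[
G := \chi_\Omega |Du|_{1,\Omega} + \chi_{\X \setminus \bar\Omega} |Dv|_{1,\X \setminus \bar\Omega},
\]
extended by zero on $\partial\Omega$, which belongs to $L^1(\mm)$ by the $W^{1,1}$-regularity of $u$ and $v$. By Proposition \ref{prop:equivalence_localized_W11}, proving both $f\in W^{1,1}(\X)$ and \eqref{eq:estimate_1mwug_gluing} reduces to verifying that for every $\infty$-test plan $\ppi$ on $\X$ and $\ppi$-a.e.\ $\gamma$, the composition $f\circ\gamma$ lies in $W^{1,1}(0,1)$ with $|(f\circ\gamma)'_t|\le G(\gamma_t)|\dot\gamma_t|$ for a.e.\ $t$.

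Fix such a $\ppi$. Since $f\in BV(\X)$, Theorem \ref{thm:BVequiv} guarantees that $f\circ\gamma \in BV(0,1)$ for $\ppi$-a.e.\ $\gamma$; evaluating \eqref{eq:curvewise_BV} at $B=\partial\Omega$ together with $|Df|(\partial\Omega)=0$ yields $|D(f\circ\gamma)|\bigl(\gamma^{-1}(\partial\Omega)\bigr)=0$ for $\ppi$-a.e.\ $\gamma$. To transfer the Sobolev information for $u$ and $v$ onto $\ppi$, I would use the standard restriction trick: for each pair of rationals $q_1<q_2$ in $[0,1]$ the set $A^{\Omega}_{q_1,q_2}:=\{\gamma:\gamma([q_1,q_2])\subset\Omega\}$ is open in the sup-distance (the compact image $\gamma([q_1,q_2])$ sits at positive distance from $\Omega^c$), and whenever $\ppi(A^{\Omega}_{q_1,q_2})>0$ the normalised pushforward $({\rm restr}_{q_1,q_2})_*(\ppi|_{A^{\Omega}_{q_1,q_2}})/\ppi(A^{\Omega}_{q_1,q_2})$ is an $\infty$-test plan on $\Omega$. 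Applying Proposition \ref{prop:equivalence_localized_W11} to $u\in W^{1,1}(\Omega)$ along this plan and undoing the affine reparametrisation gives, for $\ppi$-a.e.\ $\gamma\in A^{\Omega}_{q_1,q_2}$, that $f\circ\gamma=u\circ\gamma$ belongs to $W^{1,1}(q_1,q_2)$ with $|(f\circ\gamma)'_t|\le|Du|_{1,\Omega}(\gamma_t)|\dot\gamma_t|=G(\gamma_t)|\dot\gamma_t|$ a.e.\ on $(q_1,q_2)$. The analogous argument with $v$ and $A^{\X\setminus\bar\Omega}_{q_1,q_2}$ handles the exterior.

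After intersecting the countable family of $\ppi$-null exceptional sets obtained above with the BV-exceptional set, I fix a $\ppi$-good $\gamma$. The open sets $\gamma^{-1}(\Omega)$ and $\gamma^{-1}(\X\setminus\bar\Omega)$ decompose into countable disjoint unions of maximal open intervals, each exhausted from inside by rational subintervals; the local statements above then promote to $f\circ\gamma\in W^{1,1}_{\rm loc}$ of every such component together with the required derivative bound. Combined with $|D(f\circ\gamma)|\bigl(\gamma^{-1}(\partial\Omega)\bigr)=0$ and the integrability of $t\mapsto G(\gamma_t)|\dot\gamma_t|$ on $(0,1)$ (a direct consequence of $G\in L^1(\mm)$ together with the test-plan bounds), one concludes $D(f\circ\gamma)\ll\mathcal L^1$ on $(0,1)$, hence $f\circ\gamma\in W^{1,1}(0,1)$. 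The pointwise bound extends trivially to $\gamma^{-1}(\partial\Omega)$, where both sides vanish a.e.

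The step I expect to be most delicate is organising the countable exceptional sets so that a single $\ppi$-full-measure collection of curves simultaneously witnesses all rational-subinterval Sobolev statements, and then upgrading from local $W^{1,1}$ on the two open regions to a global $W^{1,1}$ on $(0,1)$. The crucial analytic input making this upgrade possible is precisely the hypothesis $|Df|(\partial\Omega)=0$, which denies the BV-measure of $f\circ\gamma$ any concentration on the closed set $\gamma^{-1}(\partial\Omega)$ that the restriction argument cannot directly reach.
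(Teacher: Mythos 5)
Your argument is correct, and it shares the paper's scaffolding: both proofs take the same candidate $G$, invoke Theorem \ref{thm:BVequiv} to get $f\circ\gamma\in BV(0,1)$ and, via \eqref{eq:curvewise_BV} with $B=\partial\Omega$ and $|Df|(\partial\Omega)=0$, the vanishing $|D(f\circ\gamma)|(\gamma^{-1}(\partial\Omega))=0$ for $\ppi$-a.e.\ $\gamma$, and both use the rational-interval restriction trick (the sets $\{\gamma:\gamma([q_1,q_2])\subset\Omega\}$, resp.\ the exterior) to transfer the Sobolev information of $u$ and $v$ to $\ppi$-typical curves. Where you genuinely diverge is the final assembly. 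The paper never shows $f\circ\gamma\in W^{1,1}(0,1)$ along curves: it only verifies the endpoint inequality of Definition \ref{def:W11}, by covering $[0,1]$ with rational intervals whose images lie in one of the three open sets $\Omega$, $\X\setminus\bar\Omega$, $U_n:=B(\partial\Omega,2^{-n})$, extracting a finite subcover, telescoping $|f(\gamma_1)-f(\gamma_0)|$ along intermediate rational points, bounding the $U_n$-pieces by $|D(f\circ\gamma)|(\gamma^{-1}(U_n))$, and letting $n\to\infty$. You instead verify the localized characterization of Proposition \ref{prop:equivalence_localized_W11}: decomposing $\gamma^{-1}(\Omega)$ and $\gamma^{-1}(\X\setminus\bar\Omega)$ into maximal intervals, promoting the rational-subinterval statements to $W^{1,1}_{\rm loc}$ there, and combining with the zero mass on $\gamma^{-1}(\partial\Omega)$ to get $D(f\circ\gamma)\ll\mathcal L^1$ with the right density. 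Your route yields the stronger curvewise conclusion directly and avoids the covering/telescoping step; its price is some extra bookkeeping (handling the rational pairs whose curve-set is $\ppi$-null by throwing those sets into the exceptional set, and using the $\Omega$-analogue of Proposition \ref{prop:equivalence_localized_W11}, which the paper states only for $\X$ but whose proof via restriction of test plans carries over verbatim), while the paper's endpoint-only argument needs less measure-theoretic structure along each curve but requires the tubular-neighbourhood covering. Both are complete; I see no gap in your proposal beyond these routine points you already flag.
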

\begin{proof}
Fix an $\infty$-test plan $\ppi$. Since $\chi_{\Omega} |D u| + \chi_{\X \setminus \bar{\Omega}} |D v| \in L^1(\mm)$, it is enough to show that, for $\ppi$-a.e. $\gamma$, 
\begin{equation}
\label{eq:gluing_estimate_claim}
|f(\gamma_1)- f(\gamma_0)| \le \int_0^1 (\chi_{\Omega} |D u|_{1,\Omega} + \chi_{\X \setminus \bar{\Omega}} |D v|_{1,\X\setminus \bar{\Omega}})(\gamma_t)|\dot{\gamma}_t|\,\d t.
\end{equation}
It follows by Theorem \ref{thm:BVequiv} that, for every $\gamma \in \Gamma(\X) \setminus N_0$, where $\ppi(N_0) = 0$, $f \circ \gamma \in BV(0,1)$, $|f(\gamma_1)-f(\gamma_0)| \le |D(f \circ \gamma)|(0,1)$ and $|D (f \circ \gamma)|(\gamma^{-1}(\partial \Omega)) = 0$.\\
We consider the sets: $A_1:= \Omega$, $A_2:=\X \setminus \bar{\Omega}$.\\
Given $t,s \in [0,1]$ and $A \subset \X$, we define $C_{t,s}^{A}:=\{ \gamma: \gamma([t,s]) \subset A\}$.
Notice that, if $A$ is open, $C_{t,s}^A = \cup_{k} C_{t,s}^{B_k}$, where $B_k$ is an increasing sequence of closed sets such that $A = \cup_{k} B_k$. Since $C_{t,s}^{B_k} \subset \Gamma(\X)$ is closed, we have that $C_{t,s}^A$ is Borel.
For every $q_1,q_2 \in \mathbb{Q} \cap [0,1]$, $i \in \{1,2\}$, we will consider $C_{q_1,q_2}^{A_i} =: C_{q_1,q_2}^{i}$, which are Borel sets.
Therefore, we consider, if $\ppi(C_{q_1,q_2}^{i}) >0$,
\[ \ppi_{q_1,q_2}^i:= \ppi(C_{q_1,q_2}^i)^{-1}\,\ppi \restr{C_{q_1,q_2}^i}\text{ and }\tilde{\ppi}_{q_1,q_2}^i:= ({{\rm restr}_{q_1,q_2}})_*\ppi_{q_1,q_2}^i.\]
It can be readily checked that $\tilde{\ppi}_{q_1,q_2}^i$ is a test plan on $A_i$.\\
We consider the case $i =1$. We can find a $\tilde{\ppi}_{q_1,q_2}^i$-negligible set $\tilde{N}_{q_1,q_2}^1$ such that for every $\gamma \in \Gamma(\X) \setminus \tilde{N}_{q_1,q_2}^1$
\[ |u(\gamma_1)-u(\gamma_0)| \le \int_{\gamma} |D u|_{1,\Omega} .\]
We define $N_{q_1,q_2}^1:= {\rm restr}_{q_1,q_2}^{-1}(\tilde{N}_{q_1,q_2}^1)$ which is $\ppi_{q_1,q_2}^1$- negligible, so it is $\ppi$-negligible and we can assume without loss of generality that $N_{q_1,q_2}^1 \subset C_{q_1,q_2}^1$
and for every $\gamma \in C_{q_1,q_2}^1 \setminus N_{q_1,q_2}^1$
\[ |u(\gamma_{q_2}) -u(\gamma_{q_1})| \le \int_{q_1}^{q_2} |D u|_{1,\Omega} (\gamma_t)|\dot{\gamma}_t|\,\d t.\]
We can argue similarly for the case $i = 2$, defining $\ppi$-negligible sets $N_{q_1,q_2}^2 \subset C_{q_1,q_2}^2$ being such that for every $\gamma \in C_{q_1,q_2}^2 \setminus  N_{q_1,q_2}^2$
\[ |v(\gamma_{q_2})-v(\gamma_{q_1})| \le \int_{q_1}^{q_2} |D v|_{1,\X\setminus \bar{\Omega}}(\gamma_t)|\dot{\gamma}_t|\,\d t.\]
We define the set $N = N_0 \cup \cup_{q_1,q_2}(N_{q_1,q_2}^1 \cup N_{q_1,q_2}^3)$, which is $\ppi$-negligible and we claim that for every $\gamma \in \Gamma(\X) \setminus N$ \eqref{eq:gluing_estimate_claim} holds. 
Denoting $U_n := B_{2^{-n}}(\partial \Omega)$, we have that, for every $\gamma \in \Gamma(\X) \setminus N$,
\begin{equation}
\label{eq:estimate_tubular_neigh}
    \lims_{n\to \infty} |D (f \circ \gamma)|(\gamma^{-1}(U_n)) =0.
\end{equation}
We fix $n$ and consider $A_3:=U_n$. We consider the set
\[ \mathscr{I}:= \{ I =(a,b) \cap [0,1]:\,a,b \in \mathbb{Q},\quad\gamma([a,b])\subset A_i\quad \text{for some }i \}.\]
Notice that $\mathscr{I}$ is a countable cover of open (in the induced topology of $[0,1]$) sets of $[0,1]$; therefore, there exists a finite subcover $[0,b_0),(a_i,b_i)_{i=1}^N,(a_{N+1},1)$.
For every $1 \le i \le N$, choose $c_i \in \mathbb{Q}$ such that $a_i < c_i < b_{i-1}$ and define $c_0 = 0$, $c_{N+1} = 1$; therefore, we estimate
\begin{equation}
\begin{aligned}
    |f(\gamma_1)-f(\gamma_0)| &\le \sum_{i=0}^N |f(\gamma_{c_{i+1}})-f(\gamma_{c_i})| \le \sum_{j=1}^3 \sum_{i: \gamma([c_i,c_{i+1}]) \subset A_j} |f(\gamma_{c_{i+1}})-f(\gamma_{c_i})|\\
    & \le \int_{\gamma \cap \Omega} |D u|_{1,\Omega} + \int_{\gamma \cap (\X \setminus \bar{\Omega})} |D v|_{1,\X\setminus \bar{\Omega}}+ \sum_{i: \gamma([c_i,c_{i+1}])\subset A_3} |f(\gamma_{c_{i+1}}) -f(\gamma_{c_i})|\\
    & \le \int_{\gamma \cap \Omega} |D u|_{1,\Omega} + \int_{\gamma \cap (\X \setminus \bar{\Omega})} |D v|_{1,\X\setminus \bar{\Omega}}+ \sum_{i: \gamma([c_i,c_{i+1}])\subset A_3} |D(f \circ \gamma)|(c_i,c_{i+1})\\
    &\le \int_{\gamma \cap \Omega} |D u|_{1,\Omega} + \int_{\gamma \cap (\X \setminus \bar{\Omega})} |D v|_{1,\X\setminus \bar{\Omega}}+ |D (f \circ \gamma)|(\gamma^{-1}(U_n)).
\end{aligned}
\end{equation}
By taking the $\lims$ as $n \to \infty$, using \eqref{eq:estimate_tubular_neigh}, we obtain \eqref{eq:gluing_estimate_claim}, thus proving the claim and concluding the proof.
\end{proof}
\begin{remark}
We point out that, under the hyphothesis of Lemma \ref{lemma:fromlocal_to_global_W11} and the notation therein, we have that 
\begin{equation}
\label{eq:equality_1mwug_on_Omega_and_X}
|D f|_{1,\X} = |D f|_{1,\Omega}\quad\mm\text{-a.e. on }\Omega.
\end{equation}
Indeed, the inequality $\ge$ follows from \eqref{eq:1mwug_on_Omega}, while the converse one from \eqref{eq:estimate_1mwug_gluing}.
\end{remark}
\begin{proposition}
\label{prop:sBV_to_W11}
Let $\Omega\subset \X$ be open. If $\Omega$ is a strong $BV$-extension set, then it is a $W^{1,1}$-extension set.
\end{proposition}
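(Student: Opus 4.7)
The strategy is to combine the strong $BV$-extension operator $E$ with the smoothing operator $T_\varepsilon$ from Proposition~\ref{prop:smoothing}, but applied on the complementary open set $\X\setminus\bar\Omega$ rather than on $\Omega$. Given $u\in W^{1,1}(\Omega)\subset BV(\Omega)$, set $v:=Eu\in BV(\X)$, so that $v=u$ on $\Omega$, $\|v\|_{BV(\X)}\le\|E\|_{BV}\|u\|_{BV(\Omega)}$, and crucially $|Dv|(\partial\Omega)=0$. The restriction $v\restr{\X\setminus\bar\Omega}$ lies in $BV(\X\setminus\bar\Omega)$ but need not be Sobolev, so apply Proposition~\ref{prop:smoothing} on the open set $\X\setminus\bar\Omega$ to obtain $w_\varepsilon:=T_\varepsilon(v\restr{\X\setminus\bar\Omega})\in\LIP_{\loc}(\X\setminus\bar\Omega)$, which in fact belongs to $W^{1,1}(\X\setminus\bar\Omega)$ by the final paragraph of the proof of Proposition~\ref{prop:smoothing}.

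Define the candidate extension
\[
\widetilde Eu:=v+(w_\varepsilon-v)\chi_{\X\setminus\bar\Omega},
\]
where the bracketed term is interpreted with zero extension on $\bar\Omega$ (this is licit by Proposition~\ref{prop:smoothing}). Then $\widetilde Eu=u$ on $\Omega$, $\widetilde Eu=w_\varepsilon$ on $\X\setminus\bar\Omega$, and $\widetilde Eu\in BV(\X)$ as a sum of two $BV(\X)$ functions.

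The heart of the argument is to verify $|D\widetilde Eu|(\partial\Omega)=0$, so that Lemma~\ref{lemma:fromlocal_to_global_W11} applies and yields $\widetilde Eu\in W^{1,1}(\X)$. By subadditivity,
\[
|D\widetilde Eu|(\partial\Omega)\le|Dv|(\partial\Omega)+|D((w_\varepsilon-v)\chi_{\X\setminus\bar\Omega})|(\partial\Omega).
\]
The first term vanishes by the strong $BV$-extension property. For the second, Proposition~\ref{prop:smoothing} applied on $\X\setminus\bar\Omega$ gives $|D((w_\varepsilon-v)\chi_{\X\setminus\bar\Omega})|(\partial(\X\setminus\bar\Omega))=0$. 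Since $\Omega$ is open one has $\partial(\X\setminus\bar\Omega)\subset\partial\Omega$, and any point $x\in\partial\Omega\setminus\partial(\X\setminus\bar\Omega)$ admits an open neighborhood contained in $\bar\Omega$, on which the function $(w_\varepsilon-v)\chi_{\X\setminus\bar\Omega}$ is identically zero; a countable cover by such balls (using separability of $\X$) shows that the variation also vanishes on $\partial\Omega\setminus\partial(\X\setminus\bar\Omega)$. This topological check is the main subtlety of the argument, since without it the gluing across $\partial\Omega$ could contribute a singular part.

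For the norm estimate, Lemma~\ref{lemma:fromlocal_to_global_W11} yields
\[
|D\widetilde Eu|_{1,\X}\le\chi_\Omega|Du|_{1,\Omega}+\chi_{\X\setminus\bar\Omega}\,\lip w_\varepsilon\quad\mm\text{-a.e.,}
\]
so integrating and combining with \eqref{eq:smoothing_normbounds} and the $BV$-extension bound for $v$ gives
\[
\|\widetilde Eu\|_{W^{1,1}(\X)}\le\|u\|_{W^{1,1}(\Omega)}+\|v\|_{L^1(\X)}+\varepsilon+C(|Dv|(\X\setminus\bar\Omega)+\varepsilon)\le C'\|u\|_{W^{1,1}(\Omega)}+C\varepsilon,
\]
with $C'$ depending only on $\|E\|_{BV}$. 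Choosing $\varepsilon:=\|u\|_{W^{1,1}(\Omega)}$ absorbs the error term and concludes that $\widetilde E$ is a bounded $W^{1,1}$-extension operator.
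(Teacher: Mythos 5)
Your construction is essentially the paper's: extend by the strong $BV$-operator, smooth the restriction to $\X\setminus\bar\Omega$ with Proposition \ref{prop:smoothing}, check that the glued function does not charge $\partial\Omega$, and invoke Lemma \ref{lemma:fromlocal_to_global_W11}. Your topological verification that the variation of $(w_\varepsilon-v)\chi_{\X\setminus\bar\Omega}$ vanishes on all of $\partial\Omega$, and not merely on $\partial(\X\setminus\bar\Omega)$, is in fact spelled out more carefully than in the paper and is correct.

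The one step you pass over is the final inequality $\le C'\|u\|_{W^{1,1}(\Omega)}$. To absorb $\|v\|_{L^1(\X)}+C\,|Dv|(\X\setminus\bar\Omega)\le C\|E\|_{BV}\,\|u\|_{BV(\Omega)}$ into a multiple of $\|u\|_{W^{1,1}(\Omega)}$ you need $|Du|(\Omega)\le \||Du|_{1,\Omega}\|_{L^1(\Omega)}$, i.e.\ that $W^{1,1}(\Omega)$ embeds into $BV(\Omega)$ with the expected norm bound. This is not automatic in this generality: the inclusion $W^{1,1}\subset W^{1,1}_w$ is only quoted for the whole space, not for an arbitrary open subset. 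The paper derives the needed inequality a posteriori from the extension just constructed, via
\[
|Du|(\Omega)=|D\widetilde Eu|(\Omega)\le \||D\widetilde Eu|_{1,\X}\|_{L^1(\Omega)}=\||Du|_{1,\Omega}\|_{L^1(\Omega)},
\]
where the last equality is \eqref{eq:equality_1mwug_on_Omega_and_X}, itself a consequence of the gluing estimate in Lemma \ref{lemma:fromlocal_to_global_W11}. Adding this one line closes your argument; everything else stands.
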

\begin{proof}
Consider $u \in W^{1,1}(\Omega)$ and its minimal 1-weak upper gradient $|D u|_{1,\Omega}$. Since $W^{1,1}(\Omega)\subset BV(\Omega)$, by assumption we have the existence of a strong $BV$-extension operator $F \colon BV(\Omega) \to BV(\X)$. Then we define
\[ \tilde{F} u := \begin{cases} 
F u &\text{ on }\bar{\Omega},\\
T_\varepsilon (F u \restr{\X \setminus \bar{\Omega}}) & \text{ on }\X \setminus \bar{\Omega},\\
\end{cases}\]
where $T_\varepsilon \colon BV(\X\setminus \bar{\Omega}) \to {\rm Lip}_{\textrm{loc}}(\X\setminus \bar{\Omega})$ is the smoothing operator of Proposition \ref{prop:smoothing}, when applied to the open set $\X \setminus \bar{\Omega}$ with $\varepsilon$ to be chosen later.
We rewrite the function $\tilde{F} u = F u + (T_\varepsilon (F u\restr{\X \setminus \bar{\Omega}})-F u\restr{\X \setminus \bar{\Omega}})\chi_{\X \setminus \bar{\Omega}} \in BV(\X)$ as it is sum of $BV$ functions on $\X$.
Moreover, by Proposition \ref{prop:smoothing} and the fact that $F$ is a strong $BV$-extension operator 
\begin{equation}
\label{eq:tot_variation_tildefu}
    |D \tilde{F} u|(\partial \Omega) = 0.
\end{equation}
By definition $\tilde{F} u  = u$ $\mm$-a.e.\ on $\Omega$. We check that $\tilde{F} u \in W^{1,1}(\X)$. Indeed, since $u \in W^{1,1}(\Omega)$, $T_\varepsilon (F u \restr{\X \setminus \bar{\Omega}}) \in W^{1,1}(\X \setminus \bar{\Omega})$ and \eqref{eq:tot_variation_tildefu} holds, we are in position to apply Lemma \ref{lemma:fromlocal_to_global_W11}, thus having that $\tilde{F} u \in W^{1,1}(\X)$ and
\[ |D \tilde{F} u|_{1,\X} \le |D u|_{1,\Omega} \,\chi_{\Omega}+ | D(T_\varepsilon (F u \restr{\X \setminus \bar{\Omega}}))|_{1,\X\setminus \bar\Omega}\,\chi_{\X \setminus \bar{\Omega}} \le |D u|_{1,\Omega} \,\chi_{\Omega}+  \lip(T_\varepsilon (F u \restr{\X \setminus \bar{\Omega}}))\chi_{\X \setminus \bar{\Omega}}.\]
Thus, integrating, we get that 
\begin{equation}
\label{eq:estimate_seminorm_W11_ext}
\begin{aligned}
\| |D \tilde{F} u | \|_{L^1(\mm)} & \le \||D u|\|_{L^1(\Omega)}+ \| \lip(T_\varepsilon (F u \restr{\X \setminus \bar{\Omega}})\|_{L^1(\X \setminus \bar{\Omega})}\\
& \stackrel{\eqref{eq:smoothing_normbounds}}{\le} \||D u|\|_{L^1(\Omega)}+ C(|D F u|(\X \setminus \bar{\Omega}) +\varepsilon)\\
& \le \| |D u|\|_{L^1(\Omega)}+ C(|D F u|(\X) +\varepsilon)  \le  \| |D u|\|_{L^1(\Omega)}+ C(\| u\|_{BV(\Omega)} +\varepsilon).\\
\end{aligned}
\end{equation}
To conclude, we compute
\begin{equation}
\label{eq:estimate_w11loc_to_bvloc}
    |D u|(\Omega)=|D \tilde{F} u|(\Omega) \le \| |D \tilde{F} u|_{1,\X} \|_{L^1(\Omega)} \stackrel{\eqref{eq:equality_1mwug_on_Omega_and_X}}{=} \| |D  u|_{1,\Omega} \|_{L^1(\Omega)},
\end{equation}
thus we can continue the estimate in \eqref{eq:estimate_seminorm_W11_ext}, having that $\| |D \tilde{F} u | \|_{L^1(\mm)} \le C(\| u\|_{W^{1,1}(\Omega)} +\varepsilon)$.
Then
\begin{equation}
    \| \tilde{F} u \|_{L^1(\X)} = \| F u \|_{L^1(\bar{\X})}+\varepsilon \le C(\| 
 u\|_{L^1(\Omega)} + |D u|(\Omega) +\varepsilon) \le C(\| 
 u\|_{W^{1,1}(\Omega)} +\varepsilon)
\end{equation}
where the first inequality follows from the very definition of $\tilde{F} u$ and \eqref{eq:smoothing_normbounds}, the second one from the fact that $F$ is a $BV$-extension operator and the last one from \eqref{eq:estimate_w11loc_to_bvloc}.
By choosing $\varepsilon = \| u\|_{W^{1,1}(\Omega)}$, we conclude.
\end{proof}
\subsection{Examples}
In this last subsection, we provide several examples of a metric measure spaces $(\X,\sfd,\mm)$ with $\mm(\partial \Omega) >0$ and open sets $\Omega \subset \X$ 
having some of the extension properties, but not others.
We start with a basic example from  \cite[Example 7.4]{Ambrosio-DiMarino14} showing that $W^{1,1}(\X)$ is not the same as $W_w^{1,1}(\X)$.
\begin{example}\label{ex:disk}
 Let our space $\X$ be $\mathbb R^2$ equipped with the Euclidean distance and the measure $\mm = \mathcal L^2 + \mathcal H^1|_{\partial B(0,1)}$,
Then the domain $\Omega = B(0,1) \subset \mathbb R^2$ does not have the strong $BV$, the $W^{1,1}$, nor the $W_w^{1,1}$ extension property.
\end{example}

Let us then give an example which is a $W_w^{1,1}$-extension domain, but not a $W^{1,1}$-extension domain.

\begin{example}
\label{ex:slit}
We consider the metric measure space $(\X,\sfd,\mm)=(\mathbb{R}^2,\sfd_e,\mm)$, where $\sfd_e$ is the $2$- Euclidean distance and $\mm:=\mathcal{L}^2+\mathscr{H}^1 \restr{S}$ and $S:=[0,1] \times \{ 0\}$. We consider the open set $\Omega:=\mathbb{R}^2
\setminus S$. In particular, we notice that $\mm(\partial \Omega) = \mathscr{H}^1(S) >0$. We claim that $\Omega$ is a $W_{w}^{1,1}$-extension domain, but it is not a $W^{1,1}$-extension domain.\\
We consider a function $u$ such that $u = 1$ on $[\frac{1}{3},\frac{2}{3}]\times [0,\frac{1}{3}]$, supported on $[0,1] \times \mathbb{R}$ and Lipschitz on its support. Then it follows by the very definition of $W^{1,1}(\Omega)$ that $u \in W^{1,1}(\Omega)$. Assume by contradiction that $\Omega$ is a $W^{1,1}$-extension domain and consider the extension, say $v \in W^{1,1}(\X)$. We consider $\mu:= \mathcal{L}^2(B(z,1/6))^{-1}\, \mathcal{L}^2 \restr{B(z,1/6)}$ with $z:=(\frac{1}{2},\frac{1}{6})$ and we define the map $F(x,t):= x-\frac{t}{3}e_2$ for $t \in [0,1]$ and $x \in \mathbb{R}^2$. We denote by $G \colon \X \to \Gamma(\X)$ the map defined as $(G(x))_t:=F(x,t)$. 
We define $\ppi:= G_* \mu$ which can be readily checked to be an $\infty$-test plan. We have that, for $\ppi$-a.e. $\gamma$, $v \circ \gamma$ is not $W^{1,1}(0,1)$, hence contradicting item ii) in Proposition \ref{prop:equivalence_localized_W11} for any choice of $G \in L^1(\mm)$. Hence, $v \notin W^{1,1}(\X)$.
We now show that $\Omega$ is a $W_w^{1,1}$-extension domain.
We denote respectively by $W^{1,1}_e(\Omega)$ and $BV_e(\Omega)$ the $W^{1,1}$ and $BV$ spaces on $\Omega$ in the mms $(\mathbb{R}^2,|\cdot|,\mathcal{L}^2)$; moreover, we denote by $\nabla u$ the distributional gradient of $u$. Since $\mm\restr{\Omega}=\mathcal{L}^2\restr{\Omega}$, we have $W^{1,1}_w(\Omega) = W^{1,1}_e(\Omega)$ with
\begin{equation}
    \left\|\frac{\d|D v|}{\d \mm}\right\|_{L^1(\Omega)} = \| | \nabla v| \|_{L^1(\Omega,\mm)}.
\end{equation}
The goal here is to construct the $W^{1,1}_w$ extension operator. We consider $u \in W^{1,1}_w(\Omega)$.
We define $\Omega^+ := \{ y >0\}$ and $\Omega^-:= \{ y <0\}$. By the theory of traces, since $u \in W^{1,1}(\Omega^+)$, there exists $u^+ \in L^1(\partial \Omega^+)$ such that for every $\varphi \in C^\infty_c(\mathbb{R}^2,\mathbb{R}^2)$
\begin{equation}
\label{eq:integration_by_parts_U+}
    \int_{\Omega^+} \varphi \cdot \nabla u\,\d \mathcal{L}^2+\int_{\Omega^+} u\,\div \varphi\,\d \mathcal{L}^2 = \int_{\partial \Omega^+} \varphi \cdot \nu\,u^+\,\d \mathscr{H}^1.
\end{equation}
The same holds on $\Omega^-$ and it can be readily checked that $u^+ = u^-$ on $\{ y= 0 \}\setminus S$. Therefore, summing up \eqref{eq:integration_by_parts_U+} and the same term for $\Omega^-$, we get that, for every $\varphi \in C^\infty_c(\mathbb{R}^2,\mathbb{R}^2)$
\begin{equation*}
    \int \varphi \cdot \nabla u\,\d \mathcal{L}^2+\int u\,\div \varphi\,\d \mathcal{L}^2 = \int_{S} \varphi \cdot e_2\,(u^+-u^-)\,\d \mathscr{H}^1.
\end{equation*}
Hence, $u \in BV_e(\mathbb{R}^2)$ and $Du = \nabla u \,\mathcal{L}^2+ (u^+-u^-)e_2 \,\mathscr{H}^1\restr{S}$. We define $\tilde{u} \in L^1(\mm)$ to be equal to $u$ $\mathcal{L}^2$-a.e.\ on $\Omega$ and to $0$ $\mathscr{H}^1$-a.e.\ on $S$. We claim that $\tilde{u} \in BV(\mathbb{R}^2,\sfd_e,\mm)$ and 
\begin{equation}
    |D \tilde{u}|_{\mm} \le |\nabla u| \mathcal{L}^2\restr{\mathbb{R}^2\setminus S} + (|u^+|+|u^-|-|u^+-u^-|)\mathscr{H}^1\restr{S}=:\nu.
\end{equation}
Since $u \in W^{1,1}_e(\Omega)$, we know that there exists $u_k\in W^{1,1}_e(\Omega)\cap C^{\infty}(\Omega)$ such that $u_k \to u$ in $W^{1,1}(\Omega)$. Up to passing to a strongly convergent subsequence, we can assume that there exists $H \in L^1(\Omega,\mm)$ such that $|\nabla u_k| \le H$ $\mm$-a.e.\ on $\Omega$ for every $k$.
Moreover, for every $k$, we define
\begin{equation}
\varphi_k := (k\,\sfd(B(S,\frac{1}{k}),\cdot))\wedge 1
\end{equation}
Notice that $\varphi_k u_k=0$ $\mathscr{H}^1$-a.e.\ on $S$ and that $\lip\,\varphi_k= k \chi_{B(S,\frac{2}{k})\setminus B(S,\frac{1}{k})}$ $\mm$-a.e..
Moreover, $\varphi_k u_k \to \tilde{u}$ in $L^1(\Omega,\mm)$; so, $\varphi_k u_k$ is an admissible competitor in the definition of $|D\tilde{u}|_{\mm}$ on open sets.
We consider an open cube $Q\subset \mathbb{R}^2$.
If $Q \cap S = \emptyset$, we have that $|D \tilde{u}|_\mm(Q) =\nu(Q)$.
If $Q \cap S \neq \emptyset$, we do the following. By the Leibniz formula for $\lip$ and the fact that $u_k \in C^{\infty}(\Omega)$, we get
\begin{equation}
    \int_Q \lip (\varphi_k u_k)\,\d\mm \le \int_Q \varphi_k\,|\nabla u_k|\,\d\mm + k
    \int_{Q \cap B(S,\frac{2}{k})\setminus B(S,\frac{1}{k}) }\, u_k\,\d\mm = (A_k) + (B_k)
\end{equation}
Firstly, we estimate the term $(A_k)$:
\begin{equation}
    \lims_{k\to \infty} \int_Q \varphi_k\,|\nabla u_k|\,\d\mm \le \lim_{k\to \infty} \int_Q |\nabla u_k|\,\d\mm = \| |\nabla u| \|_{L^1(Q)}. 
\end{equation}
Secondly, we estimate $(B_k)$:
\begin{equation}
    (B_k) \le \int_{S\cap Q} \avint_{\frac{1}{k}}^{\frac{2}{k}} |u_k|\,\d y\,\d x + \int_{S\cap Q} \avint_{-\frac{2}{k}}^{-\frac{1}{k}} |u_k|\,\d y\,\d x +o(1).
\end{equation}
We estimate the first term in the last equation as follows:
\begin{equation}
\begin{aligned}
    \lims_{k \to \infty} \int_{S\cap Q} \avint_{\frac{1}{k}}^{\frac{2}{k}} |u_k|\,\d y\,\d x &\le \lims_{k\to \infty} \int_{S \cap Q} \avint_{\frac{1}{k}}^{\frac{2}{k}} |u_k(x,y)|-|u_k(x,0)|\,\d y\,\d x + \lims_{k\to \infty} \int_{S \cap Q} |u_k(x,0)|\,\d x\\
    & \le \lims_{k\to \infty} \int_{S \cap Q} \avint_{\frac{1}{k}}^{\frac{2}{k}} |u_k(x,y)|-|u_k(x,0)|\,\d y\,\d x + \int_{S \cap Q} |u^+(x)|\,\d x\\
\end{aligned}
\end{equation}
where in the last inequality we used the continuity of the trace operator from $W^{1,1}_e(\Omega)$ to $L^1(\partial \Omega)$.
We continue estimating the first addendum in the last line, having
\begin{equation}
\begin{aligned}
    \lims_{k \to \infty} &\int_{S \cap Q} \avint_{\frac{1}{k}}^{\frac{2}{k}} |u_k(x,y)|-|u_k(x,0)|\,\d y\,\d x \le \lims_{k \to \infty} \int_{S \cap Q} \avint_{\frac{1}{k}}^{\frac{2}{k}} \int_0^y \partial_z |u_k(x,z)|\,\d z\,\d y\,\d x\\
     & \le \lims_{k \to \infty} \int_{[0,1]\times [0, \frac{2}{k}]} |\nabla u_k|\,\d \mathcal{L}^2 \le \lims_{k \to \infty} \int_{[0,1]\times [0, \frac{2}{k}]} H \,\d \mathcal{L}^2=0,
\end{aligned}
\end{equation}
where the last equality follows by an application of dominated convergence theorem.
The same holds for the second term. By taking the limit as $k\to \infty$, we have proven that $|D \tilde{u}|_\mm(Q) \le \nu(Q)$.
By an application of monotone class theorem, we get that $|D \tilde{u}|_\mm(B) \le \nu(B)$ for every Borel set $B$, thus proving the claim.
Hence we got that $\tilde{u} \in BV(\mathbb{R}^2,\sfd_e,\mm)$, $|D \tilde{u}|_{\mm} \ll \mm$, thus $\tilde{u} \in W^{1,1}_w(\mathbb{R}^2,\sfd_e,\mm)$. 
To conclude the proof, it is enough to estimate its norm
\begin{equation}
\begin{aligned}
    \left\|\frac{\d|D \tilde{u}|_{\mm}}{\d \mm} \right\|_{L^1(\mm)}&\le \|\nabla u\|_{L^1(\Omega)}+ \int_S (|u^+|+|u^-|-|u^+-u^-|)\,\d \mathscr{H}^1 \\
    &\le \| |\nabla u |\|_{L^1(\Omega)}+ 2 \int_S (|u^+|+|u^-|)\,\d \mathscr{H}^1 \le C\|u\|_{W^{1,1}_w(\Omega)},\\
\end{aligned}
\end{equation}
where in the last inequality we applied the continuity of the trace operator. For what concern the $L^1$-norm, we have that $\| \tilde{u} \|_{L^1(\mm)} = \| u \|_{L^1(\Omega)}$ by the very definition of $\tilde{u}$. Hence, $\| \tilde{u} \|_{W^{1,1}_w(\X)}\le C \| u \|_{W^{1,1}_w(\Omega)}$.
\end{example}

Let us end this paper with examples of sets having different reasons for having the $W^{1,1}$-extension property, but not the strong $BV$-extension property.

\begin{example}\label{ex:W11nonsBVcheat}
Let us consider a bounded domain $\Omega = (-2,2)\times (0,1) \cup (-2,-1) \times (-1,0] \cup (1,2) \times (-1,0]\subset \mathbb R^2$ in $(\mathbb R^2,\sfd, \mm)$ with three different versions of distance and reference measure:
\begin{enumerate}
    \item  $\mm = \mathcal H^1|_{\mathbb R \times \{0\}}$  and $\sfd = \sfd_{\textrm{Euc}}$.
    \item $\mm = \mathcal H^1|_{\mathbb R \times \{0\}} + \mathcal L^2$ and $\sfd((x_1,y_1),(x_2,y_2)) = |x_1-x_2| + \sqrt{|y_1-y_2|}$.
    \item $\mm = \mathcal H^1|_{\mathbb R \times \{0\}} + \sum_i 2^{-i}\delta_{q_i}$, with $\{q_i\,:\, i \in \mathbb N\} = \mathbb Q^2$ and $\sfd = \sfd_{\textrm{Euc}}$.
\end{enumerate}
Before continuing, let us note that if we would take $\mm = \mathcal H^1|_{\mathbb R \times \{0\}} + \mathcal L^2$ and $\sfd = \sfd_{\textrm{Euc}}$, then similarly to Example \ref{ex:disk}, $\Omega$ would not have the $W^{1,1}$-extension property.

In all of the versions the strong $BV$-extension property fails because $[-1,1]\times \{0\} \subset \partial \Omega$ and any extension $Eu$ of a function $u \in BV(\Omega)$ that is $0$ on $(-2,-1)\times \{0\}$ and $1$ on $(1,2)\times \{0\}$ must have $|DEu|([-1,1]\times \{0\}) \ge 1$.

Let us briefly see why the domain in all the three cases has the $W^{1,1}$-extension property. In (1) the analysis reduces to $\mathbb R$ since the measure lives only on $\mathbb R \times \{0\}$. Since $(-2,-1) \cup (1,2)$ has the $W^{1,1}$-extension property in the Euclidean line, we conclude that also $\Omega$ has the $W^{1,1}$-extension property in $\mathbb R^2$.
The version (1) of the construction is perhaps not satisfactory due to the fact that the measure lives only on the line, so the rest of the space is superfluous. The version (2) and (3) address this by forcing the support of the measure to be the whole space.

In (2) different horizontal lines are not connected to each other by rectifiable curves, so the analysis again reduces to $\mathbb R$ as in (1). In (3) every point is connected by rectifiable curves, but the reference measure outside the line supporting the 1-dimensional Hausdorff measure does not support any non-trivial Sobolev structure. Hence, again the analysis reduces to $\mathbb R$ as in (1).
\end{example}

Even the last two versions (2) and (3) of Example \ref{ex:W11nonsBVcheat} are perhaps not so satisfactory because the relevant Sobolev-structure in them is restricted to horizontal directions.

In the last example of an open set with the $W^{1,1}$-extension property, but not the strong $BV$-extension property, the Sobolev-structure is richer, but consequently the construction and the verification of the extension properties is a bit more complicated.
\begin{example}\label{ex:W11nonsBV}
We will construct an open bounded set $\Omega \subset \mathbb R^2$ and a density $\rho \in L^1_{\textrm{loc}}(\mathbb R^2)$ so that $\Omega$ has the $W^{1,1}$-extension property in $(\mathbb R^2, \sfd_{\textrm{Euc}}, \rho\mathcal L^2)$, but it does not have the strong $BV$-extension property.

The open set $\Omega$ and the density $\rho$ are constructed using a sequence of balls $B_i = B(x_i,r_i)$
and a sequence of densities $\rho_i$ supported in $2B_i$. We start by enumerating $\{q_j\}_{j=1}^\infty = \mathbb Q \cap [-2,2]\times [-1,1]$ and define $r_1 = r_2 = 1$, $x_1=(-2,0)$, $x_2=(2,0)$, and $w_1(x) =w_2(x) = 1$ for all $x$. The remaining $x_i$, $r_i$, and $w_i$ will be defined by induction as follows. Suppose that $(x_i,r_i,w_i)_{i=1}^k$ have been defined. Let $j \in \mathbb N$ be the smallest integer so that $q_j \notin \bigcup_{i=1}^k \overline{B_i}$. Define
\[
 x_{k+1} = q_j, \qquad r_{k+1} = \min\left(2^{-(k+1)},\frac12\mathop{\sfd}(q_j,\bigcup_{i=1}^k B_i)\right),
\]
and
\[
\rho_{k+1}(x) = \begin{cases}\prod_{i=1}^{k+1}r_{i}^2, & \textrm{if }x \in B(x_{k+1},2r_{k+1}) \setminus B(x_{k+1},r_{k+1}),\\
1, & \textrm{otherwise}.
\end{cases}
\]
Finally, define 
\[
\Omega = \bigcup_{i=1}^\infty B_i \qquad \textrm{and} \qquad \rho(x) = \inf_i \rho_i(x).
\]

The reason why $\Omega$ is a $W^{1,1}$-extension set is that the small densities at the different annuli allow us to make cut-offs inside the annuli. Before justifying this, let us see why $\Omega$ does not have to strong $BV$-extension property. 
By the definition of $r_i$, we have
\[
\sum_{i=3}^\infty 4r_i \le \sum_{i=3}^\infty 2^{2-i} = 1.
\]
Therefore, the Lebesgue measure of
\[
A = \left\{y \in (-1,1)\,:\, ([-2,2] \times \{y\}) \cap \bigcup_{i=3}^\infty 2B_i = \emptyset \right\}
\]
is at least one. Along the line-segments $[-2,2] \times \{y\}$, with $y \in A$, the density $w$ is identically one. Consequently, if we consider the function $f \in BV(\Omega)$ that is identically one on $B_1$ and zero elsewhere, any $BV$-extension $Ef$ of it will satisfy
\[
|D(Ef)|\left(D\right) \ge 1,
\]
where
\[
D = \bigcup_{y\in A} ([-2,2] \times \{y\}) \setminus (B_1 \cup B_2).
\]
Notice that the definition of $\Omega$ forces $[-2,2]\times [-1,1] \subset \overline{\Omega}$, and so
$D \subset \partial\Omega$. Therefore, $\Omega$ does not have the strong $BV$-extension property.

Let us next check that $\Omega$ has the $W^{1,1}$-extension property. First we note that there exists a bounded extension operator $E \colon W^{1,1}(B(0,1)) \to W^{1,1}(\mathbb R^2)$ with respect to the Lebesgue measure.
We also take a cut-off function
$\varphi \in C_0^\infty(B(0,2))$ such that $\varphi = 1$ on $B(0,1)$. 
For every $i \in \mathbb N$ we define $T_i \colon \mathbb R^2 \to \mathbb R^2 \colon z \mapsto r_iz + x_i$.
The extension operator $E_\infty$ from $W^{1,1}(\Omega)$ to $W^{1,1}(\mathbb R^2)$ will be defined as a limit of extension operators
\[
E_k \colon W^{1,1}\left(\Omega_k\right) \to W^{1,1}(\mathbb R^2), \text{ with }\Omega_k = \bigcup_{i=1}^k B_i\text{ and reference measure }\inf_{i\le k} \mm_k = \rho_i(x)\mathcal L^2
\]
that are defined inductively as follows.  We define
\[
E_1u(x) = E(u\circ T_1|_{B(0,1)})(T_1^{-1}(x)).
\]
Supposing we have defined $E_k$, we set
\[
E_{k+1}u(x) = E_{k}u|_{\Omega_k}(x)(1-\varphi( T_{k+1}^{-1}(x))) + E(u\circ T_{k+1}|_{B(0,1)})(T_{k+1}^{-1}(x))\varphi(T_{k+1}^{-1}(x)).
\]
Since $E_{k}u|_{\Omega_k}(x) = E_{k-1}u|_{\Omega_{k-1}}(x)$ for all $x \notin \bigcup_{i=k}^\infty 2B_i$, and since $\mathcal L^2\left(\bigcup_{i=k}^\infty 2B_i\right) \to 0$ as $k \to \infty$, the definition
of the final extension operator as
\[
E_\infty u(x) = \lim_{k \to \infty} E_k u|_{\Omega_k}(x)
\]
is well posed.

Let us estimate the operator norm of $E_{k+1}$.
First of all, we have
\begin{align*}
 \int_{\mathbb R^2}(&|E_{k+1}u|+ |DE_{k+1}u|)\,\d\mm_{k+1}\\
 & \le \int_{ T_{k+1}(\mathbb R^2\setminus B(0,2))}+ \int_{T_{k+1}( B(0,1)) } + \int_{T_{k+1}(B(0,2)\setminus B(0,1)) }(|E_{k+1}u|+ |DE_{k+1}u|)\,\d\mm_{k+1}\\
 & \le \|E_k\|\|u\|_{W^{1,1}(\Omega_k)} + \|u\|_{W^{1,1}(B_i)}
 +\int_{T_{k+1}(B(0,2)\setminus B(0,1)) }(|E_{k+1}u|+ |DE_{k+1}u|)\,\d\mm_{k+1}.
\end{align*}
Let us estimate the last term. For the integral of the function we get, by the definitions of $E_{k+1}$ and of $\rho_i$
\[
 \int_{T_{k+1}(B(0,2)\setminus B(0,1)) }|E_{k+1}u|\,\d\mm_{k+1} \le r_{k+1}^2( \|E_k\|\|u\|_{W^{1,1}(\Omega_k)} + \|E\|\|u\|_{W^{1,1}(B_i)}).
\]
For the gradient part we first estimate the gradient via the product and chain rules for almost all $x \in T_{k+1}(B(0,2)\setminus B(0,1))$ by
\begin{align*}
 |DE_{k+1}u(x)| & \le |DE_{k}|_{\Omega_k}u(x)| + \frac{1}{r_{k+1}}|DE(u\circ T_{k+1}|_{B(0,1)})(T_{k+1}^{-1}(x))|\\
 & \quad + C\frac{1}{r_{k+1}}\left( |E_{k}u|_{\Omega_k}(x)| + |E(u\circ T_{k+1}|_{B(0,1)})(T_{k+1}^{-1}(x))|\right).
\end{align*}
Therefore,
\begin{align*}
 \int_{T_{k+1}(B(0,2)\setminus B(0,1)) }|DE_{k+1}u|\,\d\mm_{k+1}
 \le Cr_{k+1}( \|E_k\|\|u\|_{W^{1,1}(\Omega_k)} + \|E\|\|u\|_{W^{1,1}(B_i)}).
\end{align*}
We have thus obtained the estimate
\begin{align*}
\int_{\mathbb R^2}(|E_{k+1}u|+ |DE_{k+1}u|)\,\d\mm_{k+1}
& \le 
(1+Cr_{k+1})\|E_k\|\|u\|_{W^{1,1}(\Omega_k)} + (1+Cr_k\|E\|)\|u\|_{W^{1,1}(B_i)}\\
& \le (1 + C\|E\|r_k)\|E_k\|\|u\|_{W^{1,1}(\Omega_{k+1})}.
\end{align*}
Recalling that $r_k \le 2^{-k}$ for $k\ge 3$, by iterating the above, we have for all $k\ge 3$,
\begin{align*}
 \|E_{k+1}\| & \le (1 + C\|E\|2^{-k-1})\|E_k\| \\
 & \le \left(\prod_{i=3}^{k+1}(1 + C\|E\|2^{-i})\right)\|E_2\| \le \left(\prod_{i=3}^{\infty}(1 + C\|E\|2^{-i})\right)\|E_2\| < \infty.
\end{align*}
Since for all $u \in W^{1,1}(\Omega)$ we have
\begin{align*}
\|E_\infty u\|_{W^{1,1}(\mathbb R^2)} & = \lim_{k \to \infty} \|E_k u|_{\Omega_k}\|_{W^{1,1}(\mathbb R^2)} \le  \lim_{k \to \infty}\|E_{k}\|\|u|_{\Omega_k}\|_{W^{1,1}(\Omega_k)}\\
& \le \left(\prod_{i=3}^{\infty}(1 + C\|E\|2^{-i})\right)\|E_2\|\|u\|_{W^{1,1}(\Omega)},
\end{align*}
and so $E_\infty$ is bounded and we are done.
\end{example}

%
\bibliographystyle{abbrv}
\bibliography{biblio.bib}
\end{document}